\theoremstyle{plain}
\newtheorem{thm}{Theorem}[section]
  \theoremstyle{remark}
  \newtheorem*{acknowledgement*}{Acknowledgement}
  \theoremstyle{plain}
  \newtheorem{lem}[thm]{Lemma}
  \theoremstyle{plain}
  \newtheorem{prop}[thm]{Proposition}
  \theoremstyle{plain}
  \newtheorem{cor}[thm]{Corollary}
  \theoremstyle{remark}
  \newtheorem{rem}[thm]{Remark}
  \theoremstyle{remark}
  \newtheorem{note}[thm]{Note}
\preto{\chapter}{}%
\preto{\section}{}%
\preto{\subsection}{}%
\date{}
\begin{document}

\title{Monodromy and local-global compatibility for $l=p$}

\author{Ana Caraiani}
\maketitle
\begin{abstract}
We strengthen the compatibility between local and global Langlands
correspondences for $GL_{n}$ when $n$ is even and $l=p$. Let $L$
be a CM field and $\Pi$ a cuspidal automorphic representation of
$GL_{n}(\mathbb{A}_{L})$ which is conjugate self-dual and regular
algebraic. In this case, there is an $l$-adic Galois representation
associated to $\Pi$, which is known to be compatible with local Langlands
in almost all cases when $l=p$ by recent work of Barnet-Lamb, Gee,
Geraghty and Taylor. The compatibility was proved only up to semisimplification
unless $\Pi$ has Shin-regular weight. We extend the compatibility
to Frobenius semisimplification in all cases by identifying the monodromy
operator on the global side. To achieve this, we derive a generalization
of Mokrane's weight spectral sequence for log crystalline cohomology. 
\end{abstract}

\section{Introduction}

This paper is a continuation of \cite{C}. Here we extend our local-global
compatibility result to the case $l=p$. 
\begin{thm}
\label{main theorem}Let $n\in\mathbb{Z}_{\geq2}$ be an integer and
$L$ be a CM field with complex conjugation $c$. Let $l$ be a prime
of $\mathbb{Q}$ and $\iota_{l}:\bar{\mathbb{Q}}_{l}\to\mathbb{C}$
be an isomorphism. Let $\Pi$ be a cuspidal automorphic representation
of $GL_{n}(\mathbb{A}_{L})$ satisfying
\begin{itemize}
\item $\Pi^{\vee}\simeq\Pi\circ c$
\item $\Pi$ is cohomological for some irreducible algebraic representation
$\Xi$ of $GL_{n}(L\otimes_{\mathbb{Q}}\mathbb{C})$
\end{itemize}
Let \[
R_{l}(\Pi):Gal(\bar{L}/L)\to GL_{n}(\mathbb{\bar{Q}}_{l})\]
be the Galois representation associated to $\Pi$ by \cite{Sh,CH}.
Let $y$ be a place of $L$ above $l$. Then we have the following
isomorphism of Weil-Deligne representations \[
WD(R_{l}(\Pi)|_{Gal(\bar{L}_{y}/L_{y})})^{F-ss}\simeq\iota_{l}^{-1}\mathcal{L}_{n,L_{y}}(\Pi_{y}).\]

\end{thm}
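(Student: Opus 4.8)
The plan is to follow the strategy of \cite{C} (the case $l\neq p$), with $l$-adic nearby cycles and the Rapoport--Zink--Steenbrink spectral sequence replaced by log crystalline cohomology and a generalization of Mokrane's weight spectral sequence, and to feed the output into the results of Barnet-Lamb, Gee, Geraghty and Taylor. Those results already give that $WD(R_l(\Pi)|_{Gal(\bar L_y/L_y)})^{F-ss}$ and $\iota_l^{-1}\mathcal{L}_{n,L_y}(\Pi_y)$ have the same underlying Weil representation; hence it is enough to identify the monodromy operator $N$ on the global side with the one prescribed by $\mathcal{L}_{n,L_y}(\Pi_y)$. Since the monodromy operator in a Weil--Deligne representation is unchanged by restriction to the Weil group of a finite extension, and $\mathcal{L}_{n,-}$ and the Galois representation $R_l(\Pi)$ are both compatible with solvable base change while cuspidality of $\Pi$ can be preserved in the CM, conjugate self-dual setting, I would first reduce to the case where $\Pi_y$ is an unramified twist of a representation of the form $\boxplus_i\,\mathrm{Sp}_{s_i}(\chi_i)$ with the $\chi_i$ unramified characters of $W_{L_y}$.

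In this reduced situation a compact unitary Shimura variety $\mathrm{Sh}$ with Iwahori level at $y$ and hyperspecial level away from $y$ acquires a semistable model $\mathfrak{Sh}$ over $\mathcal{O}_{L_y}$ (possibly not strictly semistable, with self-intersecting components) whose special fiber $\mathfrak{Sh}_0$ is a union of smooth proper strata that are, up to finite covers, Shimura varieties for smaller unitary groups, meeting in a combinatorial pattern of Drinfeld type. As in \cite{Sh, CH}, after an auxiliary base change and twist, cutting out the $\Pi$-isotypic Hecke eigenspace realizes $R_l(\Pi)|_{Gal(\bar L_y/L_y)}$ inside $H^{\ast}_{\mathrm{et}}(\mathrm{Sh}_{\bar L_y},\mathcal{L}_{\Xi})$, where $\mathcal{L}_{\Xi}$ is the lisse $\bar{\mathbb{Q}}_l$-sheaf attached to $\Xi$. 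By Tsuji's $C_{\mathrm{st}}$ comparison this cohomology is potentially semistable, with $D_{\mathrm{st}}$ computed by the log crystalline cohomology $H^{\ast}_{\log\text{-}\mathrm{crys}}(\mathfrak{Sh}_0)$; consequently the monodromy operator appearing in $WD(R_l(\Pi)|_{Gal(\bar L_y/L_y)})$ is the Hyodo--Kato operator $N$ on this log crystalline cohomology.

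The technical core is then a generalization of Mokrane's weight spectral sequence covering this generality (nontrivial coefficients $\mathcal{L}_{\Xi}$ and merely semistable, not strictly semistable, special fibers, in analogy with Saito's $l$-adic work): a Hecke-equivariant spectral sequence whose $E_1$-terms are built from crystalline cohomology of the intersections of the components of $\mathfrak{Sh}_0$, abutting to $H^{\ast}_{\log\text{-}\mathrm{crys}}(\mathfrak{Sh}_0)$, on which $N$ acts by an explicit combinatorial shift. I would prove it degenerates at $E_2$ for weight reasons (purity of crystalline cohomology of the smooth proper strata), so that the abutment filtration is the weight filtration, and then, using Hard Lefschetz in crystalline cohomology for the strata, identify the monodromy filtration of $N$ with the weight filtration. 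Finally I would compute the $\Pi$-part of the $E_2$-page from the known automorphic description of the cohomology of the strata, together with the maps induced on $\Pi$-isotypic components, and read off that the graded pieces, with their crystalline Frobenius and the action of $N$, assemble precisely into $\bigoplus_i \mathrm{Sp}_{s_i}\otimes(\cdot)$ with $N$ non-degenerate exactly as $\mathcal{L}_{n,L_y}$ dictates; combined with the identification of the Weil representation by Barnet-Lamb, Gee, Geraghty and Taylor and with descent along the base change, this gives the theorem.

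I expect the main obstacle to be twofold. First, establishing the generalized Mokrane spectral sequence and the compatibility of its monodromy operator with the Hyodo--Kato operator and with Tsuji's comparison isomorphism in the required generality is a substantial technical undertaking, since it goes beyond the strictly semistable, constant-coefficient case treated by Mokrane. Second, the automorphic and combinatorial computation of the $\Pi$-isotypic $E_2$-page — in particular the non-vanishing statements on the strata that force $N$ to be as large as local Langlands predicts — is the delicate point: this is exactly where the Shin-regular-weight hypothesis was previously needed, because for general weight the $\Pi$-part is spread over several cohomological degrees and one must exploit the full weight (equivalently, monodromy) filtration carried by the spectral sequence rather than merely the Frobenius eigenvalues.
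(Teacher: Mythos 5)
Your outline captures the right general framework (reduce to Iwahori level by solvable base change, realize the Galois representation in the cohomology of a compact unitary Shimura variety, pass to log crystalline cohomology via the semistable comparison theorem, and control the monodromy with a Mokrane-type weight spectral sequence), but it has two genuine gaps. The first is geometric: for $n$ even and $\Pi$ not of Shin-regular weight there is no unitary group, quasi-split at all finite places, with signature $(1,n-1)$ at a single archimedean place; one is forced to take signature $(1,n-1)$ at \emph{two} places. Consequently the $\Pi$-isotypic part of the cohomology realizes $R_l(\Pi)^{\otimes 2}$ (up to a twist), not $R_l(\Pi)$, and the integral model at Iwahori level is \emph{not} semistable: its completed local rings are of the form $W_{(K)}[[X_i,Y_j]]/(X_{i_1}\cdots X_{i_r}-\varpi,\ Y_{j_1}\cdots Y_{j_s}-\varpi)$, i.e.\ locally a product of two semistable degenerations. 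Your single semistable model $\mathfrak{Sh}$ does not exist here, and the whole point of the paper is to build a weight spectral sequence adapted to this product situation, via a double filtration $P_{i,j}$ on a ``doubly extended'' de Rham--Witt complex $W_n\tilde{\tilde{\omega}}_Y^{\cdot}$ over the base $(\mathrm{Spec}\,W[\tau,\sigma],\mathbb{N}^2)$. Mokrane's construction does not apply as stated, and this is where the real work lies.

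The second gap is in your endgame. Identifying the monodromy filtration of $N$ with the weight filtration on the abutment by ``Hard Lefschetz for the strata'' is essentially the weight--monodromy conjecture for these varieties: Hard Lefschetz gives the required isomorphisms on the $E_1$-page (that is how the monodromy filtration of $\nu$ is exhibited), but it does not by itself show that the induced powers of $N$ remain isomorphisms on the graded pieces of the abutment. The paper avoids this entirely. Since $\Pi_y$ is tempered, $\iota_l^{-1}\mathcal{L}_{n,L_y}(\Pi_y)$ is pure, and by Lemma 1.4(4) of \cite{TY} a pure Weil--Deligne representation is determined by its Frobenius-semisimple Weil part; so given the semisimplified statement from \cite{BLGGT2}, it suffices to prove that $W$ (equivalently $W^{\otimes 2}$) is \emph{pure}, with no need to compute $N$ or match it against $\boxplus_i\mathrm{Sp}_{s_i}(\chi_i)$. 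Purity is then obtained by showing that the $\Pi$-isotypic $E_1$-terms of the generalized weight spectral sequence vanish except on a single diagonal $i=2n-2$ (this uses the concentration of the strata cohomology in middle degree, transported from the $l'\neq l$ étale setting of \cite{C} via the independence of Weil cohomologies), which forces degeneration at $E_1$ and strict purity of the abutment. Your remark that the $\Pi$-part is ``spread over several cohomological degrees'' for general weight is therefore not the obstruction; the obstruction removed here is the unavailability of $R_l(\Pi)$ itself (rather than its tensor square) in cohomology, and the attendant product-semistable geometry.
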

Here $\mathcal{L}_{n,L_{y}}(\Pi_{y})$ is the image of $\Pi_{y}$
under the local Langlands correspondence, using the geometric normalization;
$WD(r)$ is the Weil-Deligne representation attached to a de Rham
$l$-adic representation $r$ of the absolute Galois group of an $l$-adic
field; $F-ss$ denotes Frobenius semisimplification. 

This theorem is proved in \cite{BLGGT1,BLGGT2} in the case when $\Pi$
has Shin-regular weight (either $n$ is odd or if $n$ is even then
$\Pi$ satisfies an additional regularity condition) and in general
up to semisimplification. Our goal is to match up the monodromy operators
in the case when $n$ is even and $\Pi$ does not necessarily have
Shin-regular weight. By Theorem 1.2 of \cite{C}, $\Pi_{y}$ is tempered,
so $\iota_{l}^{-1}\mathcal{L}_{n,L_{y}}(\Pi_{y})$ is pure (in the
sense of \cite{TY}) of some weight. By Lemma 1.4 (4) of \cite{TY},
given a semisimple representation of the Weil group of some $l$-adic
field, there is at most one way to choose the monodromy operator such
that the resulting Weil-Deligne representation is pure. 

By Theorem A of \cite{BLGGT2}, we already have an isomorphism up
to semisimplification. We note that Theorem A of \cite{BLGGT2} is
stated for an imaginary CM field $F$. For our CM field $L$ we proceed
as on pages 230-231 of \cite{HT} to find a quadratic extension $F/L$
which is an imaginary $CM$ field, in which $y=y'y''$ splits and
such that \[
[R_{l}(\Pi)|_{\mathrm{Gal}(\bar{L}/F)}]=[R_{l}(BC_{F/L}(\Pi))].\]
This together with Theorem A of \cite{BLGGT2} gives the compatibility
up to semisimplification for the place $y$ of $L$. Therefore, in
order to complete the proof of Theorem \ref{main theorem}, it suffices
to show that $W:=WD(R_{l}(\Pi)_{Gal(\bar{L}_{y}/L_{y})})^{F-ss}$
is pure of some weight when $n$ is even. From now on we will let
$n\in\mathbb{Z}_{\geq2}$ be an even integer. 

Our argument will follow the same general lines as that of \cite{TY}.
Our strategy involves reducing the problem to the case when $\Pi_{y}$
has an Iwahori fixed vector, finding in this case the tensor square
of $W$ in the log crystalline cohomology of a compact Shimura variety
with Iwahori level structure and finally computing a part of this
cohomology explicitly. For the last step, we need to derive a formula
for the log crystalline cohomology of the special fiber of the Shimura
variety in terms of the crystalline cohomology of closed Newton polygon
strata in the special fiber. Deriving this formula constitutes the
heart of this paper; we obtain it in the form of a generalization
of the Mokrane spectral sequence or as a crystalline analogue of Corollary
4.28 of \cite{C}. 

We briefly outline the structure of our paper. In Section 2 we reduce
to the case where $\Pi$ has an Iwahori fixed vector, we define an
inverse system of compact Shimura varieties associated to a unitary
group and show that the crystalline cohomology of the Iwahori-level
Shimura variety realizes the tensor square of $W$. The Shimura varieties
we work with are the same as those studied in \cite{C}, so in Section
2 we also recall the main results from \cite{C} concerning them.
In Section 3 we recall and adapt to our situation some standard results
from the theory of log crystalline cohomology and the de Rham-Witt
complex; we define and study some slight generalizations of the logarithmic
de Rham-Witt complex. In Section 4 we generalize the Mokrane spectral
sequence to our geometric setting. In Section 5 we prove Theorem \ref{main theorem}. 
\begin{acknowledgement*}
I am very grateful to my advisor, Richard Taylor, for suggesting this
problem and for his constant encouragement and advice. I would like
to thank Kazuya Kato, Mark Kisin, Jay Pottharst and Claus Sorensen
for many useful conversations and comments. I would also like to thank
Luc Illusie for his detailed comments on an earlier draft of this
paper and in particular for pointing me to Nakkajima's paper \cite{Na}. 
\end{acknowledgement*}

\section{Shimura varieties}

In this section we show that we can understand the Weil-Deligne representation
$W=WD(R_{l}(\Pi)_{Gal(\bar{L}_{y}/L_{u})})^{F-ss}$ by computing a
part of the crystalline cohomology of an inverse system of Shimura
varieties. In the first part we closely follow Sections 2 and 7 of
\cite{C} and afterwards we use some results from Section 5 of op.
cit.

We claim first that we can find a CM field extension $F'$ of $L$
such that
\begin{itemize}
\item $F'=EF_{1}$, where $E$ is an imaginary quadratic field in which
$l$ splits and $F_{1}=(F')^{c=1}$ has $[F_{1}:\mathbb{Q}]\geq2$,
\item $F'$ is soluble and Galois over $L$,
\item $\Pi_{F'}^{0}:=BC_{F'/L}(\Pi)$ is a cuspidal automorphic representation
of $GL_{n}(\mathbb{A}_{F'})$, and
\item there is a place $\mathfrak{p}$ above the place $y$ of $L$ such
that $\Pi_{F',\mathfrak{p}}^{0}$ has a nonzero Iwahori fixed vector
\end{itemize}
and a CM field $F$ which is a quadratic extension of $F'$ such that
\begin{itemize}
\item $\mathfrak{p}=\mathfrak{p}_{1}\mathfrak{p}_{2}$ splits in $F$,
\item $\mbox{Ram}_{F/\mathbb{Q}}\cup\mbox{Ram}_{\mathbb{Q}}(\Pi)\subset\mbox{Spl}_{F/F_{2},\mathbb{Q}}$,
where $F_{2}:=(F)^{c=1}$, and
\item $\Pi_{F}^{0}=BC_{F/F'}(\Pi_{F'}^{0})$ is a cuspidal automorphic representation
of $GL_{n}(\mathbb{A}_{F})$. 
\end{itemize}
We can find $F$ and $F'$ as in the proof of Corollary 5.9 of \cite{C}.
Since purity is preserved under finite extensions by Lemma 1.4 of
\cite{TY}, to show that $W$ is pure it suffices to show that \[
W_{F'}:=WD(R_{l}(\Pi_{F'}^{0})|_{Gal(\bar{F'_{\mathfrak{p}}}_{\mathfrak{}}/F'_{\mathfrak{p}})})^{F-ss}\]
is pure. Note that in this new situation $\Pi_{F',\mathfrak{p}}^{0}$
has a non-zero Iwahori-fixed vector. 

We can define an algebraic group $G$ over $\mathbb{Q}$ and an inverse
system of Shimura varieties over $F'$ corresponding to a PEL Shimura
datum $(F,*,V,\langle\cdot,\cdot\rangle,h)$. Here $ $$F$ is the
CM field defined above and $*=c$ is the involution corresponding
to complex conjugation. We take $V$ to be the $F$-vector space $F^{n}$.
The pairing \[
\langle\cdot,\cdot\rangle:V\times V\to\mathbb{Q}\]
is a non-degenerate Hermitian pairing such that $\langle fv_{1},v_{2}\rangle=\langle v_{1},f^{*}v_{2}\rangle$
for all $f\in F$ and $v_{1},v_{2}\in V$. The last element we need
is an $\mathbb{R}$-algebra homorphism $h:\mathbb{C}\to\mbox{End}_{F}(V)\otimes_{\mathbb{Q}}\mathbb{R}$
such that the bilinear pairing \[
(v_{1},v_{2})\to\langle v_{1},h(i)v_{2}\rangle\]
is symmetric and positive definite. We define the algebraic group
$G$ over $\mathbb{Q}$ \[
G(R)=\{(g,\lambda)\in\mbox{End}_{F\otimes_{\mathbb{Q}}\mathbb{R}}(V\otimes_{\mathbb{Q}}R)^{\times}\times R^{\times}|\langle gv_{1},gv_{2}\rangle=\lambda\langle v_{1},v_{2}\rangle\}\]
for any $\mathbb{Q}$-algebra $R$. 

We choose embeddings $\tau_{i}:F\hookrightarrow\mathbb{C}$ such that
$\tau_{2}=\tau_{1}\circ\sigma,$ where $\sigma$ is element of $\mbox{Gal}(F/F')$
which takes $\mathfrak{p}_{1}$ to $\mathfrak{p}_{2}$. For $\sigma\in\mbox{Hom}_{E,\tau_{E}}(F,\mathbb{C})$
we let $(p_{\sigma},q_{\sigma})$ be the signature at $\sigma$ of
the pairing $\langle\cdot,\cdot\rangle$ on $V\otimes_{\mathbb{Q}}\mathbb{R}.$
In particular, $\tau_{E}:=\tau_{1}|_{E}=\tau_{2}|_{E}$ is well-defined.
We claim that it is possible to choose a PEL datum as above such that
$(p_{\tau},q_{\tau})=(1,n-1)$ for $\tau=\tau_{1}$ or $\tau_{2}$
and $(p_{\tau},q_{\tau})=(0,n)$ otherwise and such that $G_{\mathbb{Q}_{v}}$
is quasi-split at every finite place $v$ of $\mathbb{Q}$. This follows
from Lemma 2.1 of \cite{C} and the discussion following it and it
depends crucially on the fact that $n$ is even. We choose such a
PEL datum and we let $G$ be the corresponding algebraic group over
$\mathbb{Q}$ with the prescribed signature at infinity and quasi-split
at all the finite places. 

Let $\Xi_{F}^{0}:=BC_{F/L}(\Xi)$ and $F_{2}=F{}^{c=1}$. The following
lemma is the same as Lemma 7.2 of \cite{Sh}.
\begin{lem}
Let $\Pi_{F}^{0}$ and $\Xi_{F}^{0}$ be as above. We can find a character
$\psi:\mathbb{A}_{E}^{\times}/E^{\times}\to\mathbb{C}^{\times}$ and
an algebraic representation $\xi_{\mathbb{C}}$ of $G$ over $\mathbb{C}$
satisfying the following conditions:
\begin{itemize}
\item $\psi_{\Pi_{F}^{0}}=\psi^{c}/\psi$
\item $\Xi_{F}^{0}$ is isomorphic to the restriction of $\Xi'$ to $R_{F/\mathbb{Q}}(GL_{n})\times_{\mathbb{Q}}\mathbb{C}$,
where $\Xi'$ is obtained from $\xi_{\mathbb{C}}$ by base change
from $G$ to $\mathbb{G}_{n}:=R_{E/\mathbb{Q}}(G\times_{\mathbb{Q}}E)$
\item $\xi_{\mathbb{C}}|_{E_{\infty}^{\times}}^{-1}=\psi_{\infty}^{c},$
and 
\item $\mbox{Ram}_{\mathbb{Q}}(\psi)\subset\mbox{Spl}_{F/F_{2},\mathbb{Q}}$
\item $\psi|_{\mathcal{O}_{E_{u}^{\times}}}=1$, where $u$ is the place
above $l$ induced by $\iota_{l}^{-1}\tau_{E}.$ 
\end{itemize}
\end{lem}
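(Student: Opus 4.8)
The plan is to carry out the similitude-character construction of Lemma 7.2 of \cite{Sh} (in the tradition of \cite{HT,CH}), adapted to our $F$, $F'$ and PEL datum. First I would pin down how rigid the two objects are. An irreducible algebraic representation of $\mathbb{G}_n=R_{E/\mathbb{Q}}(G\times_{\mathbb{Q}}E)$ that descends to $G$ is determined by its restriction to $R_{F/\mathbb{Q}}GL_n$ together with the character by which the central $\mathbb{G}_m$ acts, the latter being an integer subject to one parity condition guaranteeing descent through the isogeny $\mathbb{G}_m\times R_{F/\mathbb{Q}}GL_n\to\mathbb{G}_n$; so requiring the restriction to be $\Xi_F^0$ determines $\xi_{\mathbb{C}}$ up to composition with an integral power of the similitude character $G\to\mathbb{G}_m$. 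On the other side, $\psi_{\Pi_F^0}$ is the character of $C_E:=\mathbb{A}_E^\times/E^\times$ attached to $\Pi_F^0$; from $(\Pi_F^0)^\vee\simeq\Pi_F^0\circ c$ it is conjugate self-dual, $\psi_{\Pi_F^0}\cdot(\psi_{\Pi_F^0}\circ c)=1$, its ramified places lie below $\mathrm{Ram}_{\mathbb{Q}}(\Pi)$, hence (by the choice of $F$, $F'$) in places split in $F/F_2$, and it is unramified at $u$ because $\Pi_F^0$ is Iwahori-spherical there.

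With this in hand I would construct $\psi$ as follows. Fix a provisional $\xi_{\mathbb{C}}$; the third bullet then reads $\psi_\infty^c=(\xi_{\mathbb{C}}|_{E_\infty^\times})^{-1}$ and so prescribes $\psi_\infty$ exactly. One checks this is consistent with $\psi^c/\psi=\psi_{\Pi_F^0}$ at the infinite place: $\psi_\infty^c/\psi_\infty$ and $\psi_{\Pi_F^0,\infty}$ can differ only by the base change along $N_{E/\mathbb{Q}}$ of an algebraic character of $\mathbb{A}_{\mathbb{Q}}^\times/\mathbb{Q}^\times$ --- this uses that $\Xi$ and $\Pi_\infty$ have matching infinitesimal characters --- and such a discrepancy is absorbed by replacing $\xi_{\mathbb{C}}$ by its twist by the corresponding power of the similitude character. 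At the finite places I would choose local components of $\psi$ solving $\psi_v^c/\psi_v=\psi_{\Pi_F^0,v}$ by hand at the (split in $F/F_2$) ramified places of $\psi_{\Pi_F^0}$, and take $\psi$ unramified everywhere else, in particular at $u$; since $l$ splits in $E$, the two places of $E$ over $l$ are independent and nothing obstructs unramifiedness at $u$.

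It then remains to glue: these archimedean and finite components should come from a global Hecke character $\psi$ of $C_E$ with $\psi^c/\psi=\psi_{\Pi_F^0}$. Write $\Gamma=\mathrm{Gal}(E/\mathbb{Q})$. Solving $\psi^{c-1}=\psi_{\Pi_F^0}$ for a character of $C_E$, knowing that $\psi_{\Pi_F^0}$ is killed by $1+c$, is precisely the vanishing of the Tate cohomology group $\widehat H^{-1}(\Gamma,\widehat{C_E})$; the latter is Pontryagin dual to $\widehat H^{1}(\Gamma,C_E)=H^1(\Gamma,C_E)=0$ (Hilbert 90 for the idele class group), so a global solution exists. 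Two global solutions differ by a character of $C_E$ killed by $c-1$, i.e. a character of $C_E/(c-1)C_E$; this group contains every character of the form $\chi\circ N_{E/\mathbb{Q}}$ with $\chi$ a Hecke character of $\mathbb{Q}$, and such twists leave the equation $\psi^c/\psi=\psi_{\Pi_F^0}$ unchanged. Using this freedom I would match the prescribed $\psi_\infty$ and the finitely many prescribed local components, arriving at a $\psi$ for which $(\psi,\xi_{\mathbb{C}})$ satisfies all five bullets.

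I expect the real work to be bookkeeping rather than any isolated hard step: one must arrange $\psi_\infty$ on the nose --- which, through the parity condition for descent to $G$, is what fixes the similitude twist of $\xi_{\mathbb{C}}$ --- while simultaneously keeping the ramification of $\psi$ inside $\mathrm{Spl}_{F/F_2,\mathbb{Q}}$ and unramified at $u$, and verifying at each stage that the leftover twisting freedom (characters of $\mathbb{A}_{\mathbb{Q}}^\times/\mathbb{Q}^\times$ pulled back to $E$ and to $G$) is enough. This coordination is exactly what is done in Lemma 7.2 of \cite{Sh}, whose argument transfers once $F$, $F'$ and the PEL signatures have been chosen as above; I would cite it and spell out only the points where our data differ from \emph{loc.\ cit.}
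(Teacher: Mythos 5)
Your proposal is correct and matches the paper's treatment: the paper offers no independent argument, simply noting that the statement \emph{is} Lemma 7.2 of \cite{Sh}, which is exactly the citation your sketch reduces to. Your reconstruction of Shin's argument (rigidity of $\xi_{\mathbb{C}}$ up to a similitude twist, solving $\psi^{c-1}=\psi_{\Pi_F^0}$ via Hilbert 90 for the idele class group, and using twists by characters pulled back from $\mathbb{A}_{\mathbb{Q}}^{\times}/\mathbb{Q}^{\times}$ to adjust $\psi_\infty$ and the local ramification) is a faithful account of what that lemma's proof does.
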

Define $\Pi^{1}:=\psi\otimes\Pi_{F}^{0}$, which is a cuspidal automorphic
representation of $GL_{1}(\mathbb{A}_{E})\times GL_{n}(\mathbb{A}_{F})$
and $\xi:=\iota_{l}\xi_{\mathbb{C}}$. 

Corresponding to the PEL datum $(F,*,V,\langle\cdot,\cdot\rangle,h)$
we have a PEL-type moduli problem of abelian varieties. This moduli
problem is defined in Section 2.1 of \cite{C} and here we recall
some facts about it. Since the reflex field of the PEL datum is $F'$,
the moduli problem for an open compact subgroup $U\subset G(\mathbb{A}^{\infty})$
is representable by a Shimura variety $X_{U}/F'$, which is a smooth
and quasi-projective scheme of dimension $2n-2$. The inverse system
of Shimura varieties $X_{U}$ as $U$ varies has an action of $G(\mathbb{A}^{\infty})$.
As in Section III.2 of \cite{HT}, starting with $\xi$, which is
an irreducible algebraic representation of $G$ over $\bar{\mathbb{Q}}_{l}$
we can define a lisse $\bar{\mathbb{Q}}_{l}$-sheaf $\mathcal{L}_{\xi}$
over each $X_{U}$ and the action of $G(\mathbb{A}^{\infty})$ extends
to the inverse system of sheaves. The direct limit \[
H^{i}(X,\mathcal{L}_{\xi}):=\lim_{\to}H^{i}(X_{U}\times_{F'}\bar{F}',\mathcal{L}_{\xi})\]
is a semisimple admissible representation of $G(\mathbb{A}^{\infty})$
with a continuous action of $\mbox{Gal}(\bar{F'}/F')$. It can be
decomposed as \[
H^{i}(X,\mathcal{L}_{\xi})=\bigoplus_{\pi}\pi\otimes R_{\xi,l}^{i}(\pi),\]
where the sum runs over irreducible admissible representations $\pi$
of $G(\mathbb{A}^{\infty})$ over $\bar{\mathbb{Q}}_{l}$. The $R_{\xi,l}^{i}(\pi)$
are finite dimensional continuous representations od $\mbox{Gal}(\bar{F}'/F')$
over $\bar{\mathbb{Q}}_{l}$. Let $\mathcal{A}_{U}$ be the universal
abelian variety over $X_{U}$, to the inverse system of which the
action of $G(\mathbb{A}^{\infty})$ extends. To the irreducible representation
$\xi$ of $G$ we can associate as in Section III.2 of \cite{HT}
non-negative integers $m_{\xi}$ and $t_{\xi}$ as well as an idempotent
$a_{\xi}$ of $H^{^{*}}(\mathcal{A}_{U}^{m_{\xi}}\times_{F'}\bar{F}',\bar{\mathbb{Q}}_{l}(t_{\xi}))$.
(Here $\mathcal{A}_{U}^{m_{\xi}}$ denotes the $m_{\xi}$-fold product
of $\mathcal{A}_{U}$ with itself over $X_{U}$ and $\bar{\mathbb{Q}}_{l}(t_{\xi})$
is a Tate twist.) We have an isomorphism\[
H^{i}(X_{U}\times_{F'}\bar{F}',\mathcal{L}_{\xi})\simeq a_{\xi}H^{i+m_{\xi}}(\mathcal{A}_{U}^{m_{\xi}}\times_{F'}\bar{F}',\mathbb{\bar{Q}}_{l}(t_{\xi})),\]
which commutes with the $G(\mathbb{A}^{\infty})$-action. 

For every finite place $v$ of $\mathbb{Q}$ we can define a base
change morphism taking certain admissible $G(\mathbb{Q}_{v})$-representations
to admissible $\mathbb{G}(\mathbb{Q}_{v})$-representations as in
Section 4.2 of \cite{Sh}. Recall that $\mbox{Ram}_{F/\mathbb{Q}}\cup\mbox{Ram}_{\mathbb{Q}}\Pi^{1}\subset\mbox{Spl}_{F/F_{2},\mathbb{Q}}$.
If $v\not\in\mbox{Spl}_{F/F_{2},\mathbb{Q}}$ then we can define the
morphism \[
BC:\mbox{Irr}_{(l)}^{\mathrm{ur}}(G(\mathbb{Q}_{v})\to\mbox{Irr}_{(l)}^{\mathrm{ur,},\theta-\mbox{st}}(\mathbb{G}(\mathbb{Q}_{v})),\]
taking unramified representations of $G(\mathbb{Q}_{v})$ to unramified,
$\theta$-stable representations of $\mathbb{G}(\mathbb{Q}_{v})$.
If $v\in\mbox{Spl}_{F/F_{2},\mathbb{Q}}$ then the morphism \[
BC:\mbox{Irr}_{(l)}(G(\mathbb{Q}_{v})\to\mbox{Irr}_{(l)}^{\theta-\mbox{st}}(\mathbb{G}(\mathbb{Q}_{v}))\]
can be defined explicitly since $G(\mathbb{Q}_{v})$ is split. Putting
these maps together we get for any finite set of primes $\mathfrak{S}_{\mathrm{fin}}$
such that \[
\mbox{Ram}_{F/\mathbb{Q}}\cup\mbox{Ram}_{\mathbb{Q}}(\Pi)\subset\mathfrak{S}_{\mathrm{fin}}\subset\mbox{Spl}_{F/F_{2},\mathbb{Q}}\]
a base change morphism \[
BC:\mbox{Irr}_{(l)}^{\mathrm{ur}}(G(\mathbb{A}^{\mathfrak{S}_{\mathrm{fin}}\cup\{\infty\}})\otimes\mbox{Irr}_{(l)}(G(\mathbb{A}_{\mathfrak{S}_{\mathrm{fin}}}))\to\mbox{Irr}_{(l)}^{\mathrm{ur,\theta-\mathrm{st}}}(\mathbb{G}(\mathbb{A}^{\mathfrak{S}_{\mathrm{fin}}\cup\{\infty\}})\otimes\mbox{Irr}_{(l)}^{\theta-\mathrm{st}}(\mathbb{G}(\mathbb{A}_{\mathfrak{S}_{\mathrm{fin}}})).\]

Let $p$ be a prime of $\mathbb{Q}$ which splits in $E$ and such
that there is a place of $F'$ above $p$ which splits in $F$. Let
$\mathfrak{S}_{\mathrm{fin}}$ be a finite set of primes such that
\[
\mbox{Ram}_{F/\mathbb{Q}}\cup\mbox{Ram}_{\mathbb{Q}}(\Pi)\cup\{p\}\subset\mathfrak{S}_{\mathrm{fin}}\subset\mbox{Spl}_{F/F_{2},\mathbb{Q}}\]
and set $\mathfrak{S}:=\mathfrak{S}_{\mathrm{fin}}\cup\{\infty\}$.
For any $R\in\mbox{Groth}(G(\mathbb{A}^{\mathfrak{S}})\times G(\mathbb{A}_{\mathfrak{S}_{\mathrm{fin}}})\times\mbox{Gal}(\bar{F}'/F'))$
(over $\bar{\mathbb{Q}}_{l}$) and $\pi^{\mathfrak{S}}\in\mbox{Irr}^{\mathrm{ur}}(G(\mathbb{A}^{\mathfrak{S}})$
define the $\pi^{\mathfrak{S}}$-isotypic part of $R$ to be \[
R\{\pi^{\mathfrak{S}}\}:=\sum_{\rho}n(\pi^{\mathfrak{S}}\otimes\rho)[\pi^{\mathfrak{S}}][\rho],\]
where $\rho$ runs over $\mbox{Irr}_{l}(G(\mathbb{A}_{\mathfrak{S}})\times\mbox{Gal}(\bar{F}'/F').$
Also define \[
R[\Pi^{1,\mathfrak{S}}]:=\sum_{\pi^{\mathfrak{S}}}R[\pi^{\mathfrak{S}}],\]
where each sum runs over $\pi^{\mathfrak{S}}\in\mbox{\mbox{Irr}}_{l}^{\mathrm{ur}}(G(\mathbb{A}^{\mathfrak{S}})$
such that $BC(\iota_{l}\pi^{\mathfrak{S}})\simeq\Pi^{1,\mathfrak{S}}$.
\begin{prop}
Let $\mathfrak{S}=\mathfrak{S}_{\mathrm{fin}}\cup\{\infty\}$ be as
above. We have the following equality \[
BC(H^{2n-2}(X,\mathcal{L}_{\xi})[\Pi^{1,\mathfrak{S}}])\simeq C_{G}[\iota_{l}^{-1}\Pi^{1,\infty}][R_{l}(\Pi_{F'}^{0})^{\otimes2}\otimes\mathrm{rec}_{l,\iota_{l}}(\psi)]\]
of elements of $\mbox{Groth}(G(\mathbb{A}^{\infty})\times\mbox{Gal}(\bar{F}'/F)$.
Here $C_{G}$ is a positive integer and $\mathrm{rec}_{l,\iota_{l}}(\psi)$
is the continuous $l$-adic character $\mathrm{Gal}(\bar{E}/E)\to\bar{\mathbb{Q}}_{l}^{\times}$
associated to $\psi$ by global class field theory. \end{prop}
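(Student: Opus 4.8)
The plan is to deduce the Proposition from the automorphic description of the cohomology of the system $\{X_U\}$, following Shin \cite{Sh} and \cite{C}. Recall the decomposition $H^i(X,\mathcal L_\xi)=\bigoplus_\pi \pi\otimes R^i_{\xi,l}(\pi)$, the sum running over irreducible admissible $\pi$ of $G(\mathbb A^\infty)$. Since the $X_U$ are proper, Matsushima's formula expresses each $\pi$ occurring in $H^*(X,\mathcal L_\xi)$ through the discrete automorphic spectrum of $G$, and the stabilization of the trace formula together with base change to $\mathbb G_n=R_{E/\mathbb Q}(G\times_{\mathbb Q}E)$ --- as recalled in \cite{C} --- computes $\sum_i(-1)^i\,BC([H^i(X,\mathcal L_\xi)])$ in $\mathrm{Groth}(\mathbb G_n(\mathbb A^\infty)\times\mathrm{Gal}(\bar F'/F'))$ as a sum over automorphic representations $\Psi$ of $\mathbb G_n$. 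I would then restrict to the $\Pi^{1,\mathfrak S}$-isotypic part: since $\Pi^0_F$ is cuspidal, $\Pi^1=\psi\otimes\Pi^0_F$ is isobaric, so strong multiplicity one for $GL_1\times GL_n$ over $E$ forces the only $\Psi$ with $\Psi^{\mathfrak S}\simeq\Pi^{1,\mathfrak S}$ to be $\Psi=\Pi^1$; dually, every $\pi$ in the discrete spectrum of $G$ contributing to the $\Pi^{1,\mathfrak S}$-part has base change $\Pi^1$.

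The next step is to fix the cohomological degree and the Galois action. Because $\Pi$ is regular algebraic and cuspidal, $\Pi^0_F$ is essentially tempered at the infinite places, so the matching $\pi_\infty$ on $G$ lie in the discrete series $L$-packet attached to $\xi$; for the chosen signature --- $(1,n-1)$ at the embeddings $\tau_1,\tau_2$ and $(0,n)$ at the others --- the $(\mathfrak g,K_\infty)$-cohomology of such $\pi_\infty$ with coefficients in $\xi$ is concentrated in the single middle degree $2(n-1)=\dim_{\mathbb C}X_U$. Hence the $\Pi^{1,\mathfrak S}$-part of $H^i(X,\mathcal L_\xi)$ vanishes for $i\neq 2n-2$, and the alternating-sum identity produced by the trace formula becomes an honest identity for $H^{2n-2}(X,\mathcal L_\xi)[\Pi^{1,\mathfrak S}]$. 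For the Galois representation I would invoke the description of $R^{2n-2}_{\xi,l}(\pi)$ in \cite{C} (the analogue here of the Kottwitz--Harris--Taylor formula): since the place $\mathfrak p$ of $F'$ splits as $\mathfrak p_1\mathfrak p_2$ in $F$ with the two signature-$(1,n-1)$ embeddings attached to $\mathfrak p_1$ and $\mathfrak p_2$, the local contributions at these two places each carry a copy of $R_l(\Pi^0_{F'})|_{\mathrm{Gal}(\bar F'_{\mathfrak p}/F'_{\mathfrak p})}$, so that globally $R^{2n-2}_{\xi,l}(\pi)$ is, up to semisimplification, $R_l(\Pi^0_{F'})^{\otimes 2}\otimes\mathrm{rec}_{l,\iota_l}(\psi)$ for every such $\pi$ --- the tensor square reflecting the product of the two local factors, the twist by $\mathrm{rec}_{l,\iota_l}(\psi)$ coming from the character $\psi$ of the Lemma together with the similitude factor of the Shimura datum.

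Assembling the pieces, applying $BC$ to $H^{2n-2}(X,\mathcal L_\xi)[\Pi^{1,\mathfrak S}]$ sends each contributing $\pi^\infty$ to $\iota_l^{-1}\Pi^{1,\infty}$ and leaves $[R_l(\Pi^0_{F'})^{\otimes 2}\otimes\mathrm{rec}_{l,\iota_l}(\psi)]$ multiplied by a non-negative integer $C_G$, namely the total multiplicity built from the number of $\pi$ in the relevant packet, their automorphic multiplicities, and $\dim H^{2n-2}(\mathfrak g,K_\infty;\pi_\infty\otimes\xi)$; positivity of $C_G$ follows because $\Pi^1$ lies in the image of base change from $G$, so at least one such $\pi$ occurs and contributes nontrivially. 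This is the asserted equality; granting the results of \cite{C}, the Proposition is essentially their restatement in the presence of the auxiliary CM field $F$ and character $\psi$ furnished by the Lemma.

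I expect the real work to lie in the first step: controlling precisely which $\pi$ occur in the $\Pi^{1,\mathfrak S}$-isotypic part, with what signs and multiplicities, and confirming that nothing leaks into degrees $\neq 2n-2$ --- i.e. excluding endoscopic or CAP contributions. This depends entirely on the cuspidality of $\Pi$ and on Shin's trace-formula and base-change machinery as set up in \cite{C}. By contrast, the K\"unneth-type identification of $R^{2n-2}_{\xi,l}(\pi)$ with the tensor square twisted by $\mathrm{rec}_{l,\iota_l}(\psi)$ is formal once that machinery and the splitting $\mathfrak p=\mathfrak p_1\mathfrak p_2$ are in hand.
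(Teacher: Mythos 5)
Your outline follows a genuinely different route from the paper --- a direct Matsushima/stabilized-trace-formula computation rather than the geometric one actually used --- but it has a real gap at its crucial step. You ``invoke the description of $R^{2n-2}_{\xi,l}(\pi)$ in \cite{C}'' to conclude that it equals $R_l(\Pi^0_{F'})^{\otimes 2}\otimes\mathrm{rec}_{l,\iota_l}(\psi)$, calling this a formal K\"unneth-type consequence of the splitting $\mathfrak{p}=\mathfrak{p}_1\mathfrak{p}_2$. No such global description is available as a black box: establishing it \emph{is} the content of the Proposition. Moreover, the purely automorphic machinery you defer to (Shin's trace-formula arguments pinning down the $\pi$ contributing to the $\Pi^{1,\mathfrak{S}}$-isotypic part, excluding endoscopic/CAP contributions, and yielding a Kottwitz--Harris--Taylor-type formula for $R^{2n-2}_{\xi,l}(\pi)$) requires the Shin-regularity hypothesis on the weight --- precisely the hypothesis that fails in the case this paper treats. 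The same issue undercuts your $(\mathfrak{g},K_\infty)$-cohomology argument for concentration in degree $2n-2$: it presupposes that every contributing $\pi_\infty$ lies in the discrete series packet, which is not known a priori without controlling the non-tempered part of the discrete spectrum.

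The paper's proof circumvents all of this geometrically. It chooses auxiliary primes $p$ splitting suitably, combines Mantovan's formula $H(X,\mathcal{L}_\xi)=\sum_{h_1,h_2}(-1)^{h_1+h_2}\mathrm{Mant}_{(h_1,h_2)}(H_c(\mathrm{Ig}^{(h_1,h_2)},\mathcal{L}_\xi))$ with the Igusa-variety cohomology formula of Theorem 5.6 of \cite{C} (which holds without Shin-regularity), evaluates $\sum_{h_1,h_2}(-1)^{h_1+h_2}\mathrm{Mant}_{(h_1,h_2)}\circ\mathrm{Red}_n^{(h_1,h_2)}$ place-by-place using the product decompositions and Prop.\ 2.2(i), 2.3 of \cite{Sh} --- this is where the tensor-square structure actually arises, from the two places $w_1,w_2$ of $F$ above each auxiliary $w$ --- and then globalizes the resulting identity of Weil-group representations by Chebotarev. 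The vanishing outside degree $2n-2$ (and the sign $e_0=1$) is likewise obtained from the spectral sequences of Prop.\ 7.2 of \cite{C} at a prime $p\neq l$, not from archimedean representation theory. To repair your argument you would either need to import these geometric inputs anyway, or supply an independent proof of the endoscopic control and of the Galois-theoretic description of $R^{2n-2}_{\xi,l}(\pi)$ in the non-Shin-regular case.
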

\begin{proof}
Let $p\in\mathfrak{S}_{\mathrm{fin}}$ be a prime which splits in
$E$ and such that there is a place $w$ of $F'$ above the place
induced by $\tau_{E}$ over $p$ which splits in $F$, $w=w_{1}w_{2}$.
We start by recaling some constructions and results from Sections
2 and 5 of \cite{C}. It is possible to define an integral model of
each $X_{U}$ over the ring of integers $\mathcal{O}_{K}$ in $K:=F_{w_{1}}\simeq F_{w_{2}}$,
which itself represents a moduli problem of abelian varieties and
to which the sheaf $\mathcal{L}_{\xi}$. The special fiber $Y_{U}$
of this integral model has a stratification by open Newton polygon
strata $Y_{U,S,T}^{\circ}$, according to the formal (or etale) height
of the $p$-divisible group of the abelian variety at $w_{1}$ and
$w_{2}$. Each open Newton polygon stratum is covered by a tower of
Igusa varieties $\mathrm{Ig}_{U^{p},\vec{m}}^{(h_{1},h_{2})},$ where
$0\leq h_{1},h_{2}\leq n-1$ represent the etale heights of the $p$-divisible
groups at $w_{1}$ and $w_{2}$, and $\vec{m}$ is a tuple of positive
integers describing the level structure at $p$. 

Define \[
J^{(h_{1},h_{2})}(\mathbb{Q}_{p}):=\mathbb{Q}_{p}^{\times}\times D_{K,n-h_{1}}^{\times}\times GL_{h_{1}}(K)\times D_{K,n-h_{2}}^{\times}\times GL_{h_{2}}(K)\times\prod_{w}GL_{n}(F_{w}),\]
where $D_{K,n-h}$ is the division algebra over $K$ of invariant
$\frac{1}{n-h}$ and $w$ runs over places of $F$ above $\tau_{E}$
other than $w_{1}$ and $w_{2}$. The group $J^{(h_{1},h_{2})}(\mathbb{Q}_{p})$
acts on the directed system of $H_{c}^{j}(\mathrm{Ig}_{U^{p},\vec{m}}^{(h_{1},h_{2})},\mathcal{L}_{\xi})$,
as $U^{p}$ and $\vec{m}$ vary. Let \[
H_{c}(\mathrm{Ig}^{(h_{1},h_{2})},\mathcal{L}_{\xi})\in\mathrm{Groth}(G(\mathbb{A}^{\infty,p})\times J^{(h_{1},h_{2})})\]
be the alternating sum of the direct limit of $H_{c}^{j}(\mathrm{Ig}_{U^{p},\vec{m}}^{(h_{1},h_{2})},\mathcal{L}_{\xi})$
as in Section 5.1 of \cite{C}. Let $\pi_{p}\in\mathrm{Irr}_{l}(G(\mathbb{Q}_{p}))$
be a representation such that $BC(\pi_{p})\simeq\iota_{l}^{-1}\Pi_{p}^{1}$
(such a $\pi_{p}$ is unique up to isomorphism since $p$ splits in
$E$). Theorem 5.6 of \cite{C} gives a formula for computing the
cohomology of Igusa varieties, as elements of $\mbox{Groth}(\mathbb{G}(\mathbb{A}^{\mathfrak{S}})\times\mathbb{G}(\mathbb{A}_{\mathfrak{S}_{\mathrm{fin}}\backslash\{p\}})\times J^{(h_{1},h_{2})}(\mathbb{Q}_{p}))$:\[
BC^{p}(H_{c}(\mathrm{Ig}^{(h_{1},h_{2})},\mathcal{L}_{\xi})[\Pi^{1,\mathfrak{S}}])=\]
\begin{equation}
=e_{0}(-1)^{h_{1}+h_{2}}C_{G}[\iota_{l}^{-1}\Pi^{1,\mathfrak{S}}][\iota_{l}^{-1}\Pi_{\mathfrak{S}_{\mathrm{fin}}\backslash\{p\}}^{1}][\mathrm{Red}_{n}^{(h_{1},h_{2})}(\pi_{p})]\label{eq:igusa}\end{equation}
Here $e_{0}=\pm1$ independently of $h_{1},h_{2}$ and $\mathrm{Red}_{n}^{(h_{1},h_{2})}$
is a group morphism from $\mathrm{Groth}(G(\mathbb{Q}_{p}))$ to $\mathrm{Groth}(J^{(h_{1},h_{2})}(\mathbb{Q}_{p}))$,
defined explicitly above Theorem 5.6 of \cite{C}.

We can combine the above formula with Mantovan's formula for the cohomology
of Shimura varieties. This is the equality \begin{equation}
H(X,\mathcal{L}_{\xi})=\sum_{0\leq h_{1},h_{2}\leq n-1}(-1)^{h_{1}+h_{2}}\mathrm{Mant}_{(h_{1},h_{2})}(H_{c}(\mathrm{Ig}^{(h_{1},h_{2})},\mathcal{L}_{\xi}))\label{eq:mantovan}\end{equation}
of elements of $\mathrm{Groth}(G(\mathbb{A}^{\infty})\times W_{K})$.
Here $H(X,\mathcal{L}_{\xi})$ is the alternating sum of the direct
limit of the cohomology of the Shimura fibers (generic fibers) and
\[
\mathrm{Mant}_{(h_{1},h_{2})}:\mathrm{Groth}(J^{(h_{1},h_{2})}(\mathbb{Q}_{p}))\to\mathrm{Groth}(G(\mathbb{Q}_{p})\times W_{K})\]
is the functor defined in \cite{Man}. The formula \ref{eq:mantovan}
is what Theorem 22 of \cite{Man} amounts to in our situation, where
$h_{1}$ and $h_{2}$ are the parameters for the Newton stratification.
The extra term $(-1)^{h_{1}+h_{2}}$ occurs on the right hand side
because we use the same convention for the alternating sum of cohomology
as in \cite{C}, which differs by a sign from the conventions used
in \cite{Man} and \cite{Sh}. 

By combining formulas \ref{eq:igusa} and \ref{eq:mantovan} we get\[
BC^{p}(H(X,\mathcal{L}_{\xi})[\Pi^{1,\mathfrak{S}}])=e_{0}C_{G}[\iota_{l}^{-1}\Pi^{1,\infty,p}]\left(\sum_{0\leq h_{1},h_{2}\leq n-1}[\mathrm{Mant}_{(h_{1},h_{2})}(\mathrm{Red}_{n}^{(h_{1},h_{2})}(\pi_{p})]\right)\]
in $\mathrm{Groth}(\mathbb{G}(\mathbb{A}^{\infty,p})\times G(\mathbb{Q}_{p})\times W_{K})$.
We claim that \begin{equation}
\sum_{0\leq h_{1},h_{2}\leq n-1}[\mathrm{Mant}_{(h_{1},h_{2})}(\mathrm{Red}_{n}^{(h_{1},h_{2})}(\pi_{p})=[\pi_{p}][(\pi_{p,0}\circ\mathrm{Art}_{\mathbb{Q}_{p}}^{-1})|_{W_{K}}\otimes\iota_{l}^{-1}\mathcal{L}_{K,n}(\Pi_{F',w}^{0})].\label{eq:mantred}\end{equation}
By its definition above Theorem 5.6 of \cite{C}, the morphism $\mathrm{Red}_{n}^{(h_{1},h_{2})}(\pi_{p})$
breakes down as a product \[
(-1)^{h_{1}+h_{2}}\pi_{p,0}\otimes\mathrm{Red}^{n-h_{1},h_{1}}(\pi_{w_{1}})\otimes\mathrm{Red}^{n-h_{2},h_{2}}(\pi_{w_{2}})\otimes(\otimes_{w\not=w_{1},w_{2}}\pi_{w}),\]
where $w$ runs over places above the place of $p$ induced by $\tau_{E}$
other than $w_{1}$ and $w_{2}$. The morphism \[
\mathrm{Red}^{n-h,h}:\mathrm{Groth}(GL_{n}(K))\to\mathrm{Groth}(D_{K,n-h}^{\times}\times GL_{h}(K))\]
is also defined above Theorem 5.6 of \cite{C}. On the other hand,
the functor $\mathrm{Mant}_{(h_{1},h_{2})}$ also decomposes as a
product (see formula 5.6 of \cite{Sh}), into \[
\mathrm{Mant}_{(h_{1},h_{2})}(\rho)=\mathrm{Mant}_{1,0}(\rho_{0})\otimes\mathrm{Mant}_{n-h_{1},h_{1}}(\rho_{w_{1}})\otimes\mathrm{Mant}_{n-h_{2},h_{2}}(\rho_{w_{2}})\otimes(\otimes_{w\not=w_{1},w_{2}}\mathrm{Mant}_{0,m}(\rho_{w})),\]
where $w$ again runs over places above the place of $p$ induced
by $\tau_{E}$ other than $w_{1}$ and $w_{2}$. So\[
\sum_{0\leq h_{1},h_{2}\leq n-1}[\mathrm{Mant}_{(h_{1},h_{2})}(\mathrm{Red}_{n}^{(h_{1},h_{2})}(\pi_{p})]=\]
$=[\mathrm{Mant}_{1,0}(\pi_{p,0})]\otimes(\sum_{h_{1}=0}^{n-1}(-1)^{h_{1}}[\mathrm{Mant}_{n-h_{1},h_{1}}(\mathrm{Red}^{n-h_{1},h_{1}}(\pi_{w_{1}}))])\otimes$\[
\otimes(\sum_{h_{2}=0}^{n-1}(-1)^{h_{2}}[\mathrm{Mant}_{n-h_{2},h_{2}}(\mathrm{Red}^{n-h_{2},h_{2}}(\pi_{w_{2}}))])\otimes(\otimes_{w\not=w_{1},w_{2}}[\pi_{w}]).\]
Now by applying Prop. 2.2.(i) and 2.3 of \cite{Sh} we get the desired
result (note that the normalization used in their statements is slightly
different than ours, but the relation between the two different normalizations
is explained above the statement of Prop. 2.3). 

Applying equation \ref{eq:mantred}, we first see that \begin{equation}
BC(H(X,\mathcal{L}_{\xi})[\Pi^{1,\mathfrak{S}}])=e_{0}C_{G}[\iota_{l}^{-1}\Pi^{1,\infty}][(\pi_{p,0}\circ\mathrm{Art}_{\mathbb{Q}_{p}}^{-1})|_{W_{K}}\otimes\iota_{l}^{-1}\mathcal{L}_{K,n}(\Pi_{F',w}^{0})]\label{eq:local}\end{equation}
in $\mathrm{Groth}(\mathbb{G}(\mathbb{A}^{\infty})\times W_{K})$,
which means that \[
BC(H(X,\mathcal{L}_{\xi})[\Pi^{1,\mathfrak{S}}])=e_{0}[\iota_{l}^{-1}\Pi^{1,\infty}][R'(\Pi^{1})],\]
for some $[R'(\Pi^{1})]\in\mathrm{Groth}(\mathrm{Gal}(\bar{F}'/F))$.
We show now that \[
[R'(\Pi^{1})]=C_{G}[R(\Pi_{F'}^{0})^{\otimes2}\otimes\mathrm{rec}_{l,\iota_{l}}(\psi)]\]
in $\mathrm{Groth}(\mathrm{Gal}(\bar{F}/F'))$ using the Chebotarev
density theorem. Note first that $R'(\Pi^{1})$ is simply the sum
of (the alternating sum of) $R_{\xi,l}^{k}(\pi^{\infty})$ where $\pi^{\infty}$
runs over $\mathrm{Irr}_{l}(G(\mathbb{A}^{\infty})$ such that 
\begin{itemize}
\item $BC(\iota_{l}\pi^{\mathfrak{S}})\simeq\Pi^{1,\mathfrak{S}}$
\item $BC(\iota_{l}\pi_{\mathfrak{S}_{\mathrm{fin}}})\simeq\Pi_{\mathfrak{S}_{\mathrm{fin}}}$
\item $R_{\xi,l}^{k}(\pi^{\infty})\not=0$ for some $k$.
\end{itemize}
The set of such $\pi$ doesn't depend on $\mathfrak{S}$ if $\mathfrak{S}$
is chosen as described above this proposition, so the Galois representation
$R^{'}(\Pi^{1})$ is also independent of $\mathfrak{S}$. Therefore,
for any prime $w_{1}$ of $F$ where $\Pi^{1}$ is unramfied and which
is above a prime $w$ of $F'$ which splits in $F$ and above a prime
$p\not=l$ of $\mathbb{Q}$ which splits in $E$, we can choose a
finite set of places $\mathfrak{S}$ containing $p$ such that we
get from equation \ref{eq:local} \[
[R'(\Pi^{1})|_{W_{F_{w_{1}}}}]=C_{G}[(R(\Pi_{F'}^{0})^{\otimes2}\otimes\mathrm{rec}_{l,\iota_{l}}(\psi))_{W_{F_{w_{1}}}}].\]
By the Cebotarev density theorem (which tells us the Frobenius elements
of primes $w_{1}$ are dense in $\mathrm{Gal}(\bar{F}'/F)$) we conclude
that \[
[R'(\Pi^{1})]=C_{G}[R(\Pi_{F'}^{0})^{\otimes2}\otimes\mathrm{rec}_{l,\iota_{l}}(\psi)]\]
in $\mathrm{Groth}(\mathrm{Gal}(\bar{F}/F'))$.

It remains to see that $e_{0}=1$ and that $H^{k}(X,\mathcal{L}_{\xi})[\Pi^{1,\mathfrak{S}}]=0$
unless $k=2n-2$. In fact, it suffices to show the latter, since then
$H(X,\mathcal{L}_{\xi})[\Pi^{1,\mathfrak{S}}]$ will have to be an
actual representation, so that would force $e_{0}=1$. The fact that
$H^{k}(X,\mathcal{L}_{\xi})[\Pi^{1,\mathfrak{S}}]=0$ for $k\not=2n-2$
can be seen as in the proof of Corollary 7.3 of \cite{C} by choosing
a prime $p\not=l$ to work with and applying the spectral sequences
in Prop. 7.2 of loc. cit. and noting that the terms of those spectral
sequence are $0$ outside the diagonal corresponding to $k=2n-2$. \end{proof}
\begin{cor}
\label{reduction to cohomology}By Lemmas 1.4 and 1.7 of \cite{TY}
and by the same argument as in the proof of Theorem 7.4 of \cite{C},
in order to show that \[
WD(R_{l}(\Pi_{F'}^{0})|_{Gal(\bar{F'_{\mathfrak{p}}}_{\mathfrak{}}/F'_{\mathfrak{p}})})^{F-ss}\]
is pure, it suffices to show that \[
WD(BC(H^{2n-2}(X,\mathcal{L}_{\xi})[\Pi^{\mathfrak{S}}])|_{\mathrm{Gal}(\bar{F'_{\mathfrak{p}}}/F'_{\mathfrak{p}})})^{F-ss}\]
is pure, where $\mathfrak{S}$ is chosen such that it contains $l$. 
\end{cor}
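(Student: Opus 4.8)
The plan is to read the corollary off from the Proposition just proved, together with the elementary properties of pure Weil--Deligne representations recorded in Lemmas 1.4 and 1.7 of \cite{TY}, running the same argument as in the proof of Theorem 7.4 of \cite{C}; all of the real difficulty is thereby transferred to the claim that the crystalline cohomology appearing on the right-hand side is pure, which the remaining sections will establish. First I would observe that $l$ itself is an admissible choice for the auxiliary prime $p$ in the Proposition: by construction $l$ splits in $E$, the place $\mathfrak{p}$ of $F'$ lies above $l$ and splits as $\mathfrak{p}=\mathfrak{p}_1\mathfrak{p}_2$ in $F$, and $l\in\mathrm{Spl}_{F/F_2,\mathbb{Q}}$ by the choice of $F$ and $F'$ made at the start of this section; hence one may take $\mathfrak{S}=\mathfrak{S}_{\mathrm{fin}}\cup\{\infty\}$ with $\mathrm{Ram}_{F/\mathbb{Q}}\cup\mathrm{Ram}_{\mathbb{Q}}(\Pi)\cup\{l\}\subset\mathfrak{S}_{\mathrm{fin}}\subset\mathrm{Spl}_{F/F_2,\mathbb{Q}}$ and apply the Proposition with this $\mathfrak{S}$. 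Since (by its proof) $H^k(X,\mathcal{L}_\xi)[\Pi^{1,\mathfrak{S}}]=0$ for $k\neq 2n-2$, the identity of the Proposition concerns a genuine representation, and projecting it onto the $\iota_l^{-1}\Pi^{1,\infty}$-isotypic component shows that $BC(H^{2n-2}(X,\mathcal{L}_\xi)[\Pi^{1,\mathfrak{S}}])$ is, as a $\mathrm{Gal}(\bar{F}'/F')$-representation, a finite direct sum of copies of $R_l(\Pi_{F'}^0)^{\otimes 2}\otimes\mathrm{rec}_{l,\iota_l}(\psi)$.

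Next I would restrict to a decomposition group at $\mathfrak{p}$ and pass to Weil--Deligne representations. Restriction commutes with $\otimes$ and $\oplus$, and the operation $r\mapsto WD(r)^{F-ss}$ is compatible with $\otimes$, with $\oplus$, and with twisting by a character; so, writing $W_{F'}=WD(R_l(\Pi_{F'}^0)|_{\mathrm{Gal}(\bar{F'_{\mathfrak{p}}}/F'_{\mathfrak{p}})})^{F-ss}$ and letting $\chi$ be the restriction of $\mathrm{rec}_{l,\iota_l}(\psi)$ to $\mathrm{Gal}(\bar{F'_{\mathfrak{p}}}/F'_{\mathfrak{p}})$, one obtains
\[
WD\bigl(BC(H^{2n-2}(X,\mathcal{L}_\xi)[\Pi^{1,\mathfrak{S}}])|_{\mathrm{Gal}(\bar{F'_{\mathfrak{p}}}/F'_{\mathfrak{p}})}\bigr)^{F-ss}\;\cong\;\bigl(W_{F'}\otimes W_{F'}\otimes WD(\chi)\bigr)^{\oplus N}
\]
for some integer $N\geq1$. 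Since $\psi$ is an algebraic Hecke character (its infinity type is pinned down by the algebraic representation $\xi_{\mathbb{C}}$ through the third condition of the Lemma above), the character $\chi$ is de Rham, so $WD(\chi)$ is one-dimensional and pure of some weight.

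Finally, suppose the left-hand side above is pure. By Lemma 1.4 of \cite{TY}, a direct sum of copies of a Weil--Deligne representation is pure of a given weight if and only if the representation itself is, and twisting by the pure one-dimensional $WD(\chi)$ or its inverse only shifts the weight; hence $W_{F'}\otimes W_{F'}$ is pure. By Lemma 1.7 of \cite{TY} this forces $W_{F'}$ to be pure. I expect this last implication to be the only step here requiring an actual (if routine) argument, since one must recover the monodromy filtration of $W_{F'}$ from that of its tensor square; everything else is bookkeeping, and the substantive work is deferred to the crystalline computation of the right-hand side carried out in the rest of the paper. Combined with the reductions at the start of this section, purity of $W_{F'}$ gives purity of $W$, which with Lemma 1.4(4) of \cite{TY} and Theorem A of \cite{BLGGT2} completes the proof of Theorem \ref{main theorem}. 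Hence it suffices to establish purity of $WD\bigl(BC(H^{2n-2}(X,\mathcal{L}_\xi)[\Pi^{1,\mathfrak{S}}])|_{\mathrm{Gal}(\bar{F'_{\mathfrak{p}}}/F'_{\mathfrak{p}})}\bigr)^{F-ss}$ for a set $\mathfrak{S}$ containing $l$.
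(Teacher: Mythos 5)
Your proposal is correct and follows essentially the same route as the paper, which gives no separate proof but simply invokes the Proposition just established together with Lemmas 1.4 and 1.7 of \cite{TY} and the argument of Theorem 7.4 of \cite{C}; your writeup is a faithful expansion of exactly that argument (take $p=l$ in the Proposition, identify the $\Pi^{1,\mathfrak{S}}$-isotypic cohomology with copies of $R_{l}(\Pi_{F'}^{0})^{\otimes2}\otimes\mathrm{rec}_{l,\iota_{l}}(\psi)$, and reduce purity of $W_{F'}$ to purity of its tensor square via Lemma 1.7 after stripping off the direct sum and the pure character twist via Lemma 1.4). You also correctly isolate the one non-formal step, namely recovering the monodromy filtration of $W_{F'}$ from that of $W_{F'}\otimes W_{F'}$, which is precisely the content of Lemma 1.7 of \cite{TY}.
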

Now recall that $\mathfrak{p}$ is a place of $F'$ above $l$ and
such that $\mathfrak{p}=\mathfrak{p}_{1}\mathfrak{p}_{2}$. From now
on, set $K:=F_{\mathfrak{p}_{1}}\simeq F_{\mathfrak{p}_{2}}$, where
the isomorphism is via $\sigma$. Let $\mathcal{O}_{K}$ be the ring
of integers in $K$ with uniformizer $\varpi$ and residue field $k$.
For $i=1,2$ let $\mathrm{Iw}_{n,\mathfrak{p}_{i}}$ be the subgroup
of matrices in $GL_{n}(\mathcal{O}_{K})$ which reduce modulo $\mathfrak{p}_{i}$
to the Borel subgroup $B_{n}(k)$. Now we set \[
U_{\mathrm{Iw}}=U^{l}\times U_{l}^{\mathfrak{p}_{1},\mathfrak{p}_{2}}(m)\times\mathrm{Iw}_{n,\mathfrak{p}_{1}}\times\mathrm{Iw}_{n,\mathfrak{p}_{2}}\subset G(\mathbb{A}^{\infty}),\]
for some $U^{l}\subset G(\mathbb{A^{\infty}})$ compact open and $U_{l}^{\mathfrak{p}_{1},\mathfrak{p}_{2}}$
congruence subgroup at $l$ away from $\mathfrak{p}_{1}$ and $\mathfrak{p}_{2}$.
In Section 2.2 of \cite{C}, an integral model for $X_{U_{\mathrm{Iw}}}/\mathcal{O}_{K}$
is defined. This is a proper scheme of dimension $2n-1$ with smooth
generic fiber. The special fiber $Y_{U_{\mathrm{Iw}}}$ has a stratification
by closed Newton polygon strata $Y_{U_{\mathrm{Iw}},S,T}$ with $S,T\subseteq\{1,\dots,n\}$
non-empty subsets. These strata are proper, smooth schemes over $k$
of dimension $2n-\#S-\#T$. In fact, \[
Y_{U_{\mathrm{Iw}},S,T}=(\bigcap_{i\in S}Y_{1,i})\cap(\bigcap_{j\in T}Y_{2,j}),\]
where each $Y_{i,j}$ for $i=1,2$ and $j=1,\dots,n$ is cut out by
one local equation. We can also define \[
Y_{U_{\mathrm{Iw}}}^{(l_{1},l_{2})}=\bigsqcup_{\substack{S,T\subseteq\{1,\dots,n\}\\
\#S=l_{1}\\
\#T=l_{2}}
}Y_{U_{\mathrm{Iw}},S,T}\]
By Prop. 2.8 of \cite{C}, the completed local rings of $X_{U_{\mathrm{Iw}}}$
at closed geometric points $s$ of $X_{U_{\mathrm{Iw}}}$ are isomorphic
to \[
\mathcal{O}\hat{\ }_{X_{U_{\mathrm{Iw}}},s}\simeq W_{(K)}[[X_{1},\dots,X_{n},Y_{1},\dots Y_{n}]]/(X_{i_{1}}\cdot\dots\cdot X_{i_{r}}-\varpi,Y_{j_{1}}\cdot\dots\cdot Y_{j_{s}}-\varpi),\]
where $\{i_{1},\dots,i_{r}\}\subseteq\{1,\dots,n\},$ $\{j_{1},\dots,j_{r}\}\subseteq\{1,\dots,n\}$
and $W_{(K)}$ is the ring of integers in the completion of the maximal
unramified extension of $K$. The closed subscheme $Y_{1,i_{l}}$
is cut out in $\mathcal{O}\hat{\ }_{X_{U_{\mathrm{Iw}}},s}$ by $X_{i_{l}}=0$
and $Y_{2,j_{l}}$ is cut out by $Y_{j_{l}}=0$. 

The action of $G(\mathbb{A}^{\infty,p})$ extends to the inverse system
$X_{U_{\mathrm{Iw}}}/\mathcal{O}_{K}$. There is a universal abelian
variety $\mathcal{A}_{U_{\mathrm{Iw}}}/\mathcal{O}_{K}$ and the actions
of $G(\mathbb{A}^{\infty})$ and $a_{\xi}$ extend to it. We can define
a stratification of the special fiber of $\mathcal{A}_{U_{\mathrm{Iw}}}$
by \[
\mathcal{A}_{U_{\mathrm{Iw}},S,T}=\mathcal{A}_{U_{\mathrm{Iw}}}\times_{X_{U_{\mathrm{Iw}}}}X_{U_{\mathrm{Iw}},S,T}.\]
Moreover, $\mathcal{A}_{U_{\mathrm{Iw}}}^{m_{\xi}}$ and $\mathcal{A}_{U_{\mathrm{Iw}}}$
and with respect to the special fiber stratification satisfies the
same geometric properties as $X_{U_{\mathrm{Iw}}}$. In particular,
we shall see in the next section (or it follows from Section 3 of
\cite{C}) that it follows from these properties that $\mathcal{A}_{U_{\mathrm{Iw}}}^{m_{\xi}}$
can be endowed with a vertical logarithmic structure $M$ such that
\[
(\mathcal{A}_{U_{\mathrm{Iw}}}^{m_{\xi}},M)\to(\mbox{Spec }\mathcal{O}_{K},\mathbb{N})\]
is log smooth, where $(\mbox{Spec }\mathcal{O}_{K},\mathbb{N})$ is
the canonical log structure associated to the closed point. Also,
we'll see that its special fiber is of Cartier type. This means that
we can define the log crystalline cohomology of $(\mathcal{A}_{U_{\mathrm{Iw}}}^{m_{\xi}},M)$.
Indeed, if $W=W(k)$ is the ring of Witt vectors of $k$, then we
let \[
H_{\mathrm{cris}}^{*}(\mathcal{A}_{U_{\mathrm{Iw}}}^{m_{\xi}}/W)\]
be the log crystalline cohomology of $(\mathcal{A}_{U_{\mathrm{Iw}}}^{m_{\xi}}\times_{\mathcal{O}_{K}}k,M)$
(here we suppressed $M$ from the notation). This also has an action
of $a_{\xi}$ as an idempotent and of $G(\mathbb{A}^{\mathfrak{S}})$.
From the isomorphis \[
H^{2n-2}(X,\mathcal{L}_{\xi})\simeq a_{\xi}H^{2n-2+m_{\xi}}(\mathcal{A}^{m_{\xi}},\mathbb{\bar{Q}}_{l}(t_{\xi}))\]
and Corollary \ref{reduction to cohomology}, we see that it is enough
to show that \[
a_{\xi}WD(H^{2n-2+m_{\xi}}(\mathcal{A}^{m_{\xi}},\bar{\mathbb{Q}}_{l}(t_{\xi})|_{\mathrm{Gal}(\bar{K}/K)})[\Pi^{1,\mathfrak{S}}]\]
is pure. Let $\tau_{0}:W\hookrightarrow\bar{\mathbb{Q}}_{l}$ be an
embedding over $\mathbb{Z}_{l}$. By the semistable comparison theorem
of \cite{N}, we have \[
\lim_{\substack{\to\\
U_{\mathrm{Iw}}}
}a_{\xi}(H_{\mathrm{cris}}^{2n-2+m_{\xi}}(\mathcal{A}_{U_{\mathrm{Iw}}}^{m_{\xi}}\times_{\mathcal{O}_{K}}k/W)\otimes_{W,\tau_{0}}\bar{\mathbb{Q}}_{l}(t_{\xi}))[\Pi^{1,\mathfrak{S}}]\simeq\lim_{\substack{\to\\
U_{\mathrm{Iw}}}
}a_{\xi}WD(H^{2n-2+m_{\xi}}(\mathcal{A}^{m_{\xi}}\times_{\mathcal{O}_{K}}\bar{K},\bar{\mathbb{Q}}_{l}(t_{\xi})|_{\mathrm{Gal}(\bar{K}/K)})[\Pi^{1,\mathfrak{S}}],\]
so it suffices to understand the (direct limit of the) log crystalline
cohomology of the special fiber of $\mathcal{A}_{U_{\mathrm{Iw}}}^{m_{\xi}}$.
Note that in order to apply this theorem we need to check that $(\mathcal{A}_{U_{\mathrm{Iw}}}^{m_{\xi}},M)$
is a fine and saturated log-smooth proper vertical $(\mbox{Spec }\mathcal{O}_{K},\mathbb{N})$-scheme
and such that its special fiber is of Cartier type. All these properties
follow immediately from the explicit description of the log structure
$M$ in Section 3.

\section{Log crystalline cohomology }

\subsection{Log structures}

Let $\mathcal{O}_{K}$ be the ring of integers in a finite extension
$K$ of $\mathbb{Q}_{p}$ ($p$ is some prime number, which is meant
to be identified with $l$), with uniformizer $\varpi$ and residue
field $k$. Let $W=W(k)$ be the ring of Witt vectors of $k$, with
$W_{n}=W_{n}(k)$ referring to the Witt vectors of length $n$ over
$k$. Let $W_{(K)}$ be the ring of integers in the completion of
the maximal unramified extension of $K$. 

Let $X/\mathcal{O}_{K}$ be a locally Noetherian scheme such that
the completions of the strict henselizations $\mathcal{O}_{X,s}^{\wedge}$
at closed geometric points $s$ of $X$ are isomorphic to \[
W_{(K)}[[X_{1},\dots,X_{n},Y_{1},\dots Y_{n},Z_{1},\dots,Z_{m}]]/(X_{i_{1}}\cdot\dots\cdot X_{i_{r}}-\varpi,Y_{j_{1}}\cdot\dots\cdot Y_{j_{s}}-\varpi)\]
for some indices $i_{1},\dots,i_{r},j_{1},\dots,j_{s}\in\{1,\dots n\}$
and some $1\leq r,s\leq n$. Also assume that the special fiber $Y$
is a union of closed subschemes $Y{}_{1,j}$ with $j\in\{1,\dots n\}$,
which are cut out by one local equation, such that if $s$ is a closed
geometric point of $Y{}_{1,j}$, then $j\in\{i_{1},\dots,i_{r}\}$
and $Y{}_{1,j}$ is cut out in $\mathcal{O}_{X,s}^{\wedge}$ by the
equation $X_{j}=0$. Similarly, assume that $Y$ is a union of closed
subschemes $Y{}_{2,j}$ with $j\in\{1,\dots,n\}$, which are cut out
by one local equation such that if $s$ is a closed geometric point
of $Y_{2,j}$ then $j\in\{j_{1},\dots,j_{r}\}$ and $Y{}_{2,j}$ is
cut out in $\mathcal{O}_{X',s}^{\wedge}$ by the equation $Y_{j}=0$.
Then, by Lemma 2.9 of \cite{C}, $X$ is locally etale over \[
X_{r,s,m}=\mbox{Spec }\mathcal{O}_{K}[X_{1},\dots,X_{n},Y_{1},\dots,Y_{n},Z_{1},\dots Z_{m}]/(X_{1}\cdot\dots\cdot X_{r}-\varpi,Y_{1}\cdot\dots\cdot Y_{s}-\varpi).\]
The closed subschemes $Y_{i,j}$ for $i=1,2$ and $j=1,\dots,n$ are
Cartier divisors, which in the local model $X_{r,s,m}$ correspond
to the divisors $X_{j}=0$ or $Y_{j}=0$. 

Let $Y/k$ be the special fiber of $X$. For $1\leq i,j\leq n$ we
define $Y^{(i,j)}$ to be the disjoint union of the closed subschemes
of $Y$ \[
(Y_{1,l_{1}}\cap\dots\cap Y_{1,l_{i}})\bigcap(Y_{2,m_{1}}\cap\dots\cap Y_{2,m_{j}}),\]
as $\{l_{1},\dots,l_{i}\}$ (resp. $\{m_{1},\dots,m_{j}\}$) range
over subsets of $\{1,\dots,n\}$ of cardinality $i$ (resp. $j$).
Each $Y^{(i,j)}$ is a proper smooth scheme over $k$ of dimension
$2n-i-j$. 
\begin{rem}
Even though this section is general, we are basically thinking of
$X$ as $\mathcal{A}_{U_{\mathrm{Iw}}}$ for some compact open subgroup
$U_{\mathrm{Iw}}\subset G(\mathbb{A}^{\infty})$ with Iwahori level
structure at $\mathfrak{p}_{1}$ and $\mathfrak{p}_{2}$. $X_{U_{\mathrm{Iw}}}$
(and therefore $\mathcal{A}_{U_{\mathrm{Iw}}}$ as well) satisfies
the above conditions by Prop. 2.8 of \cite{C}. The prime $p$ is
meant to be identified with $l$. 
\end{rem}
Let $(\mbox{Spec }\mathcal{O}_{K},\mathbb{N})$ be the log scheme
corresponding to $\mbox{Spec }\mathcal{O}_{K}$ endowed with the canonical
log structure associated to the special fiber. This is given by the
map $1\in\mathbb{N}\mapsto\varpi\in\mathcal{O}_{K}$. We endow $X$
with the log structure $M$ associated to the special fiber $Y$.
Let $j:X_{K}\to X$ be the open immersion and $i:Y\to X$ be the closed
immersion. This log structure is defined by \[
M=j_{*}(\mathcal{O}_{X_{K}}^{\times})\cap\mathcal{O}_{X}\to\mathcal{O}_{X}.\]
We have a map of log schemes $(X,M)\to(\mbox{Spec }\mathcal{O}_{K},\mathbb{N})$,
given by sending $1\in\mathbb{N}$ to $\varpi\in M$. Locally, we
have a chart for this map, given by \[
\mathbb{N}\to\mathbb{N}^{r}\oplus\mathbb{N}^{s}/(1,\dots,1,0,\dots,0)=(0,\dots,0,1,\dots1),\]
\[
1\mapsto(1,\dots,1,0,\dots,0)=(0,\dots,0,1\dots1).\]
It is easy to see from this that $(X,M)/(\mbox{Spec }\mathcal{O}_{K},\mathbb{N})$
is log smooth and that the log structure $M$ on $X$ is fine, saturated
and vertical. We can pull back $M$ to a log structure on $Y$, which
we still denote $M$ and then we get a log smooth map of log schemes
\[
(Y,M)\to(\mbox{Spec }k,\mathbb{N}).\]
(Here we have the canonical log structure on $k$ associated to $1\in\mathbb{N}\mapsto0\in k,$
which is the same as the pullback of the canonical log structure on
$\mbox{Spec }\mathcal{O}_{K}$.) Note that, since $(X,M)$ is saturated
over $(\mathrm{Spec}\ \mathcal{O}_{K},\mathbb{N})$, so its special
fiber is of Cartier type (cf. \cite{Ts}). 

We can also endow $X$ with log structures $\tilde{M}_{1}$,$\tilde{M}_{2}$
and $\tilde{M}$. Let $U_{i,j}$ be the complement of $Y_{i,,j}$
in $X$ for $i=1,2$ and $j=1,\dots,n$. Let \[
j_{i,j}:U_{i,j}\to X\]
denote the open immersion. We define $\tilde{M}_{1}$,$\tilde{M}_{2}$
and $\tilde{M}$ as follows\[
\tilde{M}_{1}=\left(\bigoplus_{j=1}^{n}(j_{1,j*}(\mathcal{O}_{U_{1,j}}^{\times})\cap\mathcal{O}_{X})\right)/\sim\]
\[
\tilde{M}_{2}=\left(\bigoplus_{j=1}^{n}(j_{1,j*}(\mathcal{O}_{U_{1,j}}^{\times})\cap\mathcal{O}_{X})\right)/\sim\]
\[
\tilde{M}=\left(\bigoplus_{j=1}^{n}(j_{1,j*}(\mathcal{O}_{U_{1,j}}^{\times})\cap\mathcal{O}_{X})\oplus\bigoplus_{j=1}^{n}(j_{2,j*}(\mathcal{O}_{U_{2,j}}^{\times})\cap\mathcal{O}_{X})\right)/\sim,\]
where $\sim$ signifies that we've identified the image of $\mathcal{O}_{X}^{\times}$
in all the terms of the direct sums (basically we are taking an amalgamated
sum of the log structures associated to each of the $Y_{i,j}$). We
have a map $\tilde{M}\to M$ given by inclusion on each $\mathcal{O}_{U_{i,j}}^{\times}$. 
\begin{lem}
\label{chart for M tilde}Locally on $X$, we have a chart for $\tilde{M}$
given by \[
X\to\mbox{Spec }\mathcal{O}_{k}[X_{1},\dots,X_{n},Y_{1},\dots,Y_{n},Z_{1},\dots Z_{m}]/(X_{1}\cdot\dots\cdot X_{r}-\varpi,Y_{1}\cdot\dots\cdot Y_{r}-\varpi)\to\mbox{Spec }\mathbb{Z}[\mathbb{N}^{r}\oplus\mathbb{N}^{s}],\]
where $(0,\dots0,1,0\dots0)\mapsto X_{i}$ if the $1$ is in the $i$th
position and $1\leq i\leq r$ and $(0,\dots0,1,0\dots0)\mapsto Y_{i-r}$
if the $1$ is in the $i$th position and $r+1\leq i\leq r+s.$\end{lem}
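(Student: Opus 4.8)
The plan is to verify the statement \'etale-locally on $X$. By Lemma 2.9 of \cite{C} I may replace $X$ by the standard local model $X_{r,s,m}$, or by a strict henselization of it at a closed point $s$; near $s$ the only closed subschemes $Y_{i,j}$ passing through $s$ are, after relabelling, $Y_{1,1}=\{X_1=0\},\dots,Y_{1,r}=\{X_r=0\}$ and $Y_{2,1}=\{Y_1=0\},\dots,Y_{2,s}=\{Y_s=0\}$, so that only these finitely many summands are non-trivial in the amalgamated sum defining $\tilde M$ (for the remaining indices $Y_{i,j}$ misses $s$ and the factor $j_{i,j*}(\mathcal{O}_{U_{i,j}}^{\times})\cap\mathcal{O}_X$ is trivial there).

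First I would treat each divisor separately. For $1\le j\le r$ I claim that $N_{1,j}:=j_{1,j*}(\mathcal{O}_{U_{1,j}}^{\times})\cap\mathcal{O}_X$ is the log structure associated to the pre-log structure $\mathbb{N}\to\mathcal{O}_X$, $1\mapsto X_j$, and likewise $N_{2,j}:=j_{2,j*}(\mathcal{O}_{U_{2,j}}^{\times})\cap\mathcal{O}_X$ is associated to $1\mapsto Y_j$ for $1\le j\le s$. Since $X_j$ is a non-zero-divisor which is not a unit, that associated log structure is simply the submonoid $\{u\,X_j^{k}:u\in\mathcal{O}_X^{\times},\,k\ge 0\}$ of $\mathcal{O}_X$, so what must be shown is that every section $f$ of $\mathcal{O}_X$ which is invertible on $U_{1,j}$ is, \'etale-locally, of the form $u\,X_j^{k}$. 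Here I would use that $X_{r,s,m}$ is normal (being a relative toric scheme over $\mathcal{O}_K$; or: a complete intersection which is regular in codimension one — note it is not regular once $r,s\ge2$): invertibility of $f$ off $\{X_j=0\}$ forces $\mathrm{div}(f)=\sum_{l=1}^{s}a_l[P_l]$ with all $a_l\ge 0$, where the prime components of the reduced Cartier divisor $\{X_j=0\}$ are $P_l=\{X_j=Y_l=0\}$, $l=1,\dots,s$, and $\mathrm{div}(X_j)=\sum_{l}[P_l]$. The key point is that in the divisor class group of $X_{r,s,m}$ the only relations among the classes $[P_l]$ are those visible on monomials, so that $\mathrm{div}(f)\sim 0$ forces $a_1=\dots=a_s=:k$; then $f\,X_j^{-k}$ has trivial divisor and hence, by normality, is a unit.

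Granting this, the remaining steps are formal. By construction $\tilde M$ is the coproduct, in the category of log structures on $X$, of the sheaves $N_{i,j}$ — the relation $\sim$ being precisely the identification of their common submonoid $\mathcal{O}_X^{\times}$ — and passing from a pre-log structure to its associated log structure, being left adjoint to the forgetful functor, preserves coproducts. The coproduct of the pre-log structures listed above is $\mathbb{N}^{r}\oplus\mathbb{N}^{s}\to\mathcal{O}_X$ sending the $i$-th standard generator to $X_i$ for $1\le i\le r$ and the $(r+j)$-th to $Y_j$ for $1\le j\le s$. Hence $\tilde M$ is the log structure associated to this pre-log structure, which is precisely the pullback to $X$ of the canonical log structure on $\mathrm{Spec}\,\mathbb{Z}[\mathbb{N}^{r}\oplus\mathbb{N}^{s}]$ along the composite $X\to X_{r,s,m}\to\mathrm{Spec}\,\mathbb{Z}[\mathbb{N}^{r}\oplus\mathbb{N}^{s}]$ of the statement; since the \'etale pullback of a chart is a chart, this gives the asserted chart for $\tilde M$ locally on $X$.

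The main obstacle is the single-divisor computation of the second paragraph. Because the special fiber is a normal crossings divisor, $\{X_j=0\}$ is reducible and $X_{r,s,m}$ is not factorial, so one cannot merely invoke that the divisorial log structure of a principal divisor is free on a local equation; the real work is the balancing statement — that an effective divisor supported on $\{X_j=0\}$ is principal only when it is a uniform multiple $k\sum_{l}[P_l]=\mathrm{div}(X_j^{k})$ — which I would extract from the explicit toric description of $X_{r,s,m}$. Once that is established, both the amalgamation step and the identification with the map of the statement are immediate.
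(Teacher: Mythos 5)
Your reduction of the lemma to the ``balancing'' statement is correct, and the statement itself is true, but your route is genuinely different from the paper's and the key step is left unproved. The paper never touches the divisor class group of $X_{r,s,m}$: it introduces the auxiliary scheme $Z=\tilde U\times\tilde V\times W$ over $\mathcal{O}_K[\tau,\sigma]$, obtained by replacing the two occurrences of $\varpi$ by independent parameters $\tau,\sigma$. That scheme is smooth over $\mathcal{O}_K$, hence regular and locally factorial, and the divisor $D$ is simple normal crossings there, so the identification of the amalgamated divisorial log structure with the chart $\mathbb{N}^r\oplus\mathbb{N}^s$ is the standard fact for SNC divisors on regular schemes (supplemented by the Kato--Niziol log-regularity results); the lemma then follows by pulling back along the cartesian square $X_{r,s,m}=Z\times_{\mathrm{Spec}\,\mathcal{O}_K[\tau,\sigma]}\mathrm{Spec}\,\mathcal{O}_K$. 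Your proposal instead confronts head-on the non-factoriality of $X_{r,s,m}$, which is exactly what the paper's detour through $Z$ is designed to avoid.

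The gap is in the second paragraph. You correctly observe that everything hinges on showing that a principal effective divisor supported on $\{X_j=0\}$ is a uniform multiple of $\mathrm{div}(X_j)=\sum_l[P_l]$, and this is true (for instance, when $r=s=2$ the completed local ring at a deepest point is the quadric $Y_1Y_2=X_1X_2$ inside a regular local ring, whose local class group is $\mathbb{Z}$ with $[P_1]=-[P_2]$, so $a[P_1]+b[P_2]$ is principal iff $a=b$). But you only gesture at ``the explicit toric description of $X_{r,s,m}$,'' and that computes the wrong object: the statement you need concerns stalks of $j_{1,j*}(\mathcal{O}_{U_{1,j}}^{\times})\cap\mathcal{O}_X$ on the \'etale site, i.e.\ \'etale-local rings of $X$, and class groups do not transfer naively from the global affine model. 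Localizing at a closed point kills the classes of all height-one primes missing that point (which need not be trivial classes, so new relations among the surviving $[P_l]$ could a priori appear), and passing to strict henselizations or completions requires the injectivity of $\mathrm{Cl}(R)\to\mathrm{Cl}(\hat R)$ for excellent normal local rings together with the fact that the $P_l$ remain prime there; normality of the \'etale-local rings must also be checked rather than only for $X_{r,s,m}$. None of this is fatal, but it is the actual mathematical content of the lemma, and until it is carried out the proposal is a plan rather than a proof. The amalgamation step and the identification with the stated chart are, as you say, formal once the single-divisor computation is in place.
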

\begin{proof}
We shall make use of Kato-Niziol's results on log smoothness and log
regularity, namely:
\begin{itemize}
\item if $f:T\to S$ is a log smooth morphism of fs log schemes with $S$
log regular then $T$ is log regular (see 8.2 of\cite{K2}) and
\item if $T$ is log regular, then $M_{T}=j_{*}\mathcal{O}_{U}^{\times}\cap\mathcal{O}_{X}$,
where $j:U\hookrightarrow T$ is the inclusion of the open subset
of triviality of $T$ (see 8.6 of \cite{N2}). 
\end{itemize}
Let us define the following log schemes over $(\mathrm{Spec}\ \mathcal{O}_{K},triv)$:
\[
\tilde{U}:=\mathrm{Spec}\ \mathcal{O}_{K}[X_{1},\dots,X_{n},\sigma]/(X_{1}\cdot\dots\cdot X_{r}-\sigma)\]
\[
\tilde{V}:=\mathrm{Spec}\ \mathcal{O}_{K}[Y_{1},\dots,Y_{n},\tau]/(Y_{1}\cdot\dots\cdot Y_{s}-\tau)\]
\[
W:=\mathrm{Spec}\ \mathcal{O}_{K}[Z_{1},\dots,Z_{m}]\]

\[
Z:=\tilde{U}\times_{\mathrm{(Spec}\ \mathcal{O}_{K},triv)}\tilde{V}\times_{\mathrm{(Spec}\ \mathcal{O}_{K},triv)}W\]
Then $Z$, equipped with the product log structure $L$ is smooth
over $\mathcal{O}_{K}$ and log smooth over $(\mathrm{Spec}\ \mathcal{O}_{K}[\sigma,\tau],triv)$.
Therefore, $Z$ is regular. The log structure $L$ is given by the
simple normal crossings divisor \[
D:=(\bigcup_{j=1}^{r}(X_{j}=0))\cup(\bigcup_{j=1}^{s}(Y_{j}=0)).\]
Since $Z$ is regular, the log structure $L$ is the same as the amalgamation
of the log structures defined by the smooth divisors $(X_{j}=0),(Y_{j}=0)$.
Locally on $X$, we have a commutative diagram of schemes with a cartesian
square \begin{equation}
\xymatrix{X\ar[r] & X_{r,s,m}\ar[d]\ar[r] & Z\ar[d]\\
 & \mathrm{Spec}\ \mathcal{O}_{K}\ar[r] & \mathrm{Spec}\ \mathcal{O}_{K}[\tau,\sigma]}
,\label{diagram for M tilde}\end{equation}
where the inverse image of $(X_{j}=0)$ in $X$ is $Y_{j}^{1}$, the
inverse image of $(Y_{j}=0)$ in $X$ is $Y_{j}^{2}$. Therefore,
the log structure on $X$ induced by that of $Z$ coincides with the
log structure $\tilde{M}$, defined as the amalgamated sum of the
log structures induced by the $Y_{j}^{1}$ and $Y_{j}^{2}$. 
\end{proof}
If we endow $\mbox{Spec }\mathcal{O}_{K}$ with the log structure
$\mathbb{N}^{2}$ associated to $(a,b)\in\mathbb{N}^{2}\mapsto\pi^{a+b}\in\mathcal{O}_{K}$,
then we claim that we have a log smooth map of log schemes \begin{equation}
(X,\tilde{M})\to(\mbox{Spec }\mathcal{O}_{K},\mathbb{N}^{2})\label{critical map}\end{equation}
whose chart is given locally by \[
(a,b)\in\mathbb{N}^{2}\mapsto(a,\dots a,b,\dots b)\in\mathbb{N}^{r}\oplus\mathbb{N}^{s}.\]

By definition, $\tilde{M}$ is the amalgamated sum of $\tilde{M}_{1}$
and $\tilde{M}_{2}$ as log structures on $X$ (or, in other words,
$\tilde{M}$ is the log structure associated to the pre-log structure
$\tilde{M}_{1}\oplus\tilde{M}_{2}\to\mathcal{O}_{X}$). Therefore,
it suffices to prove the following lemma. 
\begin{lem}
We can define a global map of log schemes $(X,\tilde{M}_{1})\to(\mathrm{Spec}\ \mathcal{O}_{K},\mathbb{N})$
which locally admits the chart given by the diagonal embedding $\mathbb{N}\to\mathbb{N}^{r}$.\end{lem}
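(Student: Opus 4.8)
The plan is to construct the map $(X,\tilde{M}_1)\to(\mathrm{Spec}\ \mathcal{O}_K,\mathbb{N})$ by the same log-regularity strategy used in the proof of Lemma \ref{chart for M tilde}, but now keeping track of only the first family of divisors. First I would recall that $\tilde{M}_1$ is, by definition, the amalgamated sum over $j=1,\dots,n$ of the log structures associated to the Cartier divisors $Y_{1,j}\subset X$, each cut out locally by $X_j=0$ in the local model $X_{r,s,m}$. As in the previous lemma, since $X$ is log regular for the log structure coming from the full simple normal crossings divisor (it is log smooth over a log regular base), each of these divisorial log structures is $j_{1,j*}(\mathcal{O}_{U_{1,j}}^\times)\cap\mathcal{O}_X$, and the amalgamated sum $\tilde{M}_1$ agrees with the log structure cut out by the divisor $\bigcup_{j=1}^{n}Y_{1,j}$.

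Next I would build the auxiliary log scheme carrying the chart. Take $\tilde{U}:=\mathrm{Spec}\ \mathcal{O}_K[X_1,\dots,X_n,\sigma]/(X_1\cdots X_r-\sigma)$ equipped with its product log structure over $(\mathrm{Spec}\ \mathcal{O}_K,\mathrm{triv})$, together with $W:=\mathrm{Spec}\ \mathcal{O}_K[Y_1,\dots,Y_n,Z_1,\dots,Z_m]$ with trivial log structure, and form $Z':=\tilde{U}\times_{(\mathrm{Spec}\ \mathcal{O}_K,\mathrm{triv})}W$. This $Z'$ is regular, smooth over $\mathcal{O}_K$, and log smooth over $(\mathrm{Spec}\ \mathcal{O}_K[\sigma],\mathrm{triv})$; its log structure is the one attached to the snc divisor $\bigcup_{j=1}^{r}(X_j=0)$. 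Pulling back along $X\to X_{r,s,m}\to Z'$ (with $X_{r,s,m}\to\mathrm{Spec}\ \mathcal{O}_K[\sigma]$ sending $\sigma\mapsto\varpi$) gives a morphism $(X,\tilde{M}_1)\to(\mathrm{Spec}\ \mathcal{O}_K,\mathbb{N})$, where the target log structure is $\mathbb{N}\mapsto\varpi$. The chart is the composite $\mathbb{N}\to\mathbb{N}^r\to\Gamma(\tilde{U},M)$, i.e. the diagonal $\mathbb{N}\to\mathbb{N}^r$, $a\mapsto(a,\dots,a)$, followed by $e_i\mapsto X_i$; this is exactly the stated local chart.

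Finally I would check that these local maps glue to a global morphism. The point is that the target $(\mathrm{Spec}\ \mathcal{O}_K,\mathbb{N})$ is canonical and the image of the generator $1\in\mathbb{N}$ is the distinguished element $\varpi\in\tilde{M}_1$, which is defined globally (it is the common image of the $r$ local coordinates, and globally it is the section of $\tilde{M}_1$ mapping to $\varpi\in\mathcal{O}_X$ coming from the special fiber on the $Y_{1,j}$-side). Since a morphism of log schemes to $(\mathrm{Spec}\ \mathcal{O}_K,\mathbb{N})$ over $\mathrm{Spec}\ \mathcal{O}_K$ is the same as a global section of $\tilde{M}_1$ lifting $\varpi$, and such a section exists and is unique, the local maps automatically glue. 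Log smoothness is then a local statement, verified by the chart above together with Kato's criterion (the map $\mathbb{N}\to\mathbb{N}^r$ is injective with torsion-free and finite cokernel after passing to the relevant quotients, up to the $p=\mathrm{char}\ k$ issue, which is handled exactly as in the proof that $(X,M)/(\mathrm{Spec}\ \mathcal{O}_K,\mathbb{N})$ is log smooth). The main obstacle is really just the gluing/globalization: making sure the chart, which is only defined \'etale-locally via the map to $X_{r,s,m}$, produces a map to a genuinely canonical base so that no cocycle condition needs to be checked by hand; invoking the $j_*\mathcal{O}^\times\cap\mathcal{O}$ description of $\tilde{M}_1$ coming from log regularity is what makes this transparent.
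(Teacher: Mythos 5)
Your reduction of the problem is the right one and matches the paper's: everything comes down to producing a \emph{global} section of $\tilde{M}_{1}$ lying over $\varpi\in\mathcal{O}_{X}$, since then one maps $1\in\mathbb{N}$ to that section and the local charts are the diagonal $\mathbb{N}\to\mathbb{N}^{r}$, $1\mapsto(1,\dots,1)$, via $X_{1}\cdots X_{r}=\varpi$. You correctly flag the globalization as ``the main obstacle,'' but the argument you give for it does not go through. Your claim that ``the amalgamated sum $\tilde{M}_{1}$ agrees with the log structure cut out by the divisor $\bigcup_{j}Y_{1,j}$,'' i.e.\ with $j_{*}\mathcal{O}_{U}^{\times}\cap\mathcal{O}_{X}$ for $U$ the complement of that divisor, is false in this setting: since locally $X_{1}\cdots X_{r}=\varpi$, the union $\bigcup_{j}Y_{1,j}$ is the \emph{entire special fiber}, so $U$ is the generic fiber $X_{K}$ and $j_{*}\mathcal{O}_{X_{K}}^{\times}\cap\mathcal{O}_{X}$ is the full log structure $M$, which strictly contains $\tilde{M}_{1}$ (locally the inclusion has chart $\mathbb{N}^{r}\to(\mathbb{N}^{r}\oplus\mathbb{N}^{s})/\mathbb{N}$, which is not surjective for $s\geq1$; e.g.\ the local equation $Y_{1}$ of $Y_{2,1}$ is a section of $M$ but not of $\tilde{M}_{1}$). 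The log-regularity criterion applies to the auxiliary regular scheme $Z$ of the previous lemma, not to $X$ itself with only the $\tilde{M}_{1}$-divisors, so it cannot be ``invoked to make the gluing transparent.'' Your remaining justification --- that the desired section ``is the section of $\tilde{M}_{1}$ mapping to $\varpi$'' --- presupposes existence and uniqueness of exactly what has to be constructed; a priori the local sections $X_{1}\cdots X_{r}$ need not glue, and $\alpha:\tilde{M}_{1}\to\mathcal{O}_{X}$ need not be injective for an amalgamated sum.

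The missing ingredient, which is the actual content of the paper's proof, is the locally verified injectivity of the natural map $\tilde{M}_{1}\to M$: on charts it is $\mathbb{N}^{r}\to(\mathbb{N}^{r}\oplus\mathbb{N}^{s})/\mathbb{N}$ (identity into the first factor), which is injective for $r,s\geq1$. This exhibits $\tilde{M}_{1}$ as a subsheaf of $M$. Since $\varpi$ is a global section of $M$ and locally lies in the subsheaf $\tilde{M}_{1}$ (being $X_{1}\cdots X_{r}$ there), it is a global section of $\tilde{M}_{1}$, and the local maps glue for the reason you wanted. If you insert this injectivity argument in place of the appeal to $j_{*}\mathcal{O}^{\times}\cap\mathcal{O}$, your construction of the chart via the auxiliary scheme $\tilde{U}\times W$ is fine and the rest of your write-up (log smoothness being local and checked on the chart) is correct.
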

\begin{proof}
It suffices to show that $\varpi$ is a global section of $\tilde{M}_{1}$,
since then we can simply map $1\in\mathbb{N}$ to $\varpi\in\tilde{M}_{1}$.
For this, note that we have a natural map of log structures on $X$
\[
\tilde{M}_{1}\to M,\]
since the open subset of triviality of $\tilde{M}_{1}$ is the generic
fiber of $X$ and $M$ is the log structure defined by the inclusion
of the generic fiber. Moreover, we can check locally that this map
is injective, since it can be described by the chart $\mathbb{N}^{r}\to\mathbb{N}^{r}\oplus\mathbb{N}^{s}\to(\mathbb{N}^{r}\oplus\mathbb{N}^{s})/\mathbb{N}$
for $r,s\geq1$, where the first map is the identity on the first
factor. Now, locally on $X$ we have the equation $X_{1}\cdot\dots\cdot X_{r}=\varpi$,
where $X_{i}$ are local equations defining the closed subschemes
$Y_{i}^{1}$ of $X$. By definition, the $X_{i}$ are local sections
of $\tilde{M}_{1}$, so $\varpi$ is a local section of $\tilde{M}_{1}$.
But $\varpi$ is also a global section of $M$ and $\tilde{M}_{1}\hookrightarrow M$,
so $\varpi$ is a global section of $\tilde{M}_{1}$. \end{proof}
\begin{lem}
We have a cartesian diagram of maps of log schemes \[
\xymatrix{(X,M)\ar[r]\ar[d] & (X,\tilde{M})\ar[d]\\
(\mbox{Spec }\mathcal{O}_{K},\mathbb{N})\ar[r] & (\mbox{Spec }\mathcal{O}_{K},\mathbb{N}^{2})}
,\]
where the bottom horizontal arrow is the identity on the underlying
schemes and maps $(a,b)\in\mathbb{N}^{2}$ to $a+b\in\mathbb{N}.$ \end{lem}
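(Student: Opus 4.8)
The plan is to verify the cartesian property on log structures by working with the explicit charts already constructed. Recall that fiber products of fine log schemes are computed by taking the fiber product of the underlying schemes and the pushout (amalgamated sum) of the monoid charts; concretely, if we have charts $P \to \mathcal{O}_X$, $Q \to \mathcal{O}_S$ and a third chart $R \to \mathcal{O}_T$ together with maps $R \to P$ and $R \to Q$, then the fiber product $X \times_T S$ in fine log schemes has chart given by the pushout $P \oplus_R Q$ (after saturation and integralization, which here will be automatic). So the statement reduces to a computation with the monoids $\mathbb{N}$, $\mathbb{N}^2$, and $\mathbb{N}^r \oplus \mathbb{N}^s / (1,\dots,1,0,\dots,0) \sim (0,\dots,0,1,\dots,1)$.

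First I would pin down the three charts precisely from the earlier lemmas: by Lemma \ref{chart for M tilde} and the discussion of the critical map \eqref{critical map}, locally on $X$ the log structure $\tilde{M}$ admits the chart $\mathbb{N}^r \oplus \mathbb{N}^s$ over the chart $\mathbb{N}^2$ of $(\mathrm{Spec}\ \mathcal{O}_K, \mathbb{N}^2)$, via $(a,b) \mapsto (a,\dots,a,b,\dots,b)$; and $M$ admits the chart $\mathbb{N}^r \oplus \mathbb{N}^s / ((1,\dots,1,0,\dots,0) = (0,\dots,0,1,\dots,1))$ over the chart $\mathbb{N}$ of $(\mathrm{Spec}\ \mathcal{O}_K, \mathbb{N})$, via $1 \mapsto (1,\dots,1,0,\dots,0)$, as spelled out in Section 3.1. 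The bottom arrow $(\mathrm{Spec}\ \mathcal{O}_K, \mathbb{N}) \to (\mathrm{Spec}\ \mathcal{O}_K, \mathbb{N}^2)$ is the identity on schemes and on monoids is the map $\mathbb{N}^2 \to \mathbb{N}$, $(a,b) \mapsto a+b$ (dually $1 \mapsto (1,1)$, which indeed maps to $\varpi$ as required).

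The key computation is then: the pushout of $\mathbb{N} \leftarrow \mathbb{N}^2 \to \mathbb{N}^r \oplus \mathbb{N}^s$, where the left map is $(a,b) \mapsto a+b$ and the right map is $(a,b) \mapsto (a,\dots,a,b,\dots,b)$, is canonically isomorphic to $(\mathbb{N}^r \oplus \mathbb{N}^s)/\!\sim$ with the stated relation. Indeed, the pushout $\mathbb{N} \oplus_{\mathbb{N}^2} (\mathbb{N}^r \oplus \mathbb{N}^s)$ is the quotient of $\mathbb{N} \oplus \mathbb{N}^r \oplus \mathbb{N}^s$ by the congruence forcing $(a+b; 0) \sim (0; (a,\dots,a,b,\dots,b))$; setting $a=1, b=0$ and $a=0, b=1$ and using that every element of $\mathbb{N}$ is a sum of copies of $1$, one sees the $\mathbb{N}$-factor is identified with the diagonal of the first $r$ coordinates and simultaneously with the diagonal of the last $s$ coordinates, which is exactly the relation defining $M$'s chart; one checks this quotient is already integral and saturated so no further correction is needed. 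Having matched charts on both sides, and noting the underlying scheme square is trivially cartesian (all four schemes are $X$ or $\mathrm{Spec}\ \mathcal{O}_K$ with the obvious maps and $X \times_{\mathrm{Spec}\ \mathcal{O}_K} \mathrm{Spec}\ \mathcal{O}_K = X$), the universal property of the fiber product in fine (saturated) log schemes gives the claimed cartesian diagram locally; since the identification is canonical it glues.

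The main obstacle I expect is purely bookkeeping: making sure the pushout is taken in the right category (fine saturated log schemes, so possibly requiring the integralization/saturation of the naive monoid pushout) and checking that the naive amalgamated sum of monoids computed above is in fact already fine and saturated, so that it legitimately represents $M$'s chart without any correction term — this is where one could slip. A secondary point is to confirm the compatibility of the two maps $\tilde{M}_1 \hookrightarrow M$ and $\tilde{M}_2 \hookrightarrow M$ with the structure maps to $\mathcal{O}_X$, i.e. that the square commutes at the level of prelog structures and not just abstract monoids; but this is immediate from the constructions in Section 3.1, since $\varpi$ is the common image. Once these are in hand the proof is a one-line invocation of the explicit charts plus the universal property.
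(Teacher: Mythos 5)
Your proof is correct in substance, but it is not the route the paper takes. The paper's proof is geometric: it reuses the explicit local liftings from the proof of Lemma 3.2 (the schemes $\tilde U$, $\tilde V$, $W$, $Z$ over $\mathrm{Spec}\ \mathcal{O}_K[\tau,\sigma]$) and exhibits $(X,M)$ locally as the result of two successive base changes, $Z\times_{\mathrm{Spec}\ \mathcal{O}_K[\tau,\sigma]}\mathrm{Spec}\ \mathcal{O}_K[u]$ followed by $u\mapsto\varpi$ (resp.\ $0$), keeping track of which arrows are \'etale, which are exact closed immersions, and which are non-exact closed immersions; the cartesian square of the lemma is then read off from the chain of cartesian squares of schemes together with diagram (3.1). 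You instead verify the universal property directly at the level of charts, computing the amalgamated sum $(\mathbb{N}^r\oplus\mathbb{N}^s)\oplus_{\mathbb{N}^2}\mathbb{N}$ and matching it with the stated chart of $M$. The two arguments encode the same computation — the paper's ambient lifting $\tilde U\times_{\mathrm{Spec}\ \mathcal{O}_K[u]}\tilde V\times W$ is essentially $\mathrm{Spec}$ of your pushout chart — but your version makes explicit the one point that both approaches must address and that you only assert: that the naive monoid pushout $(\mathbb{N}^r\oplus\mathbb{N}^s)/\bigl((1,\dots,1,0,\dots,0)\sim(0,\dots,0,1,\dots,1)\bigr)$ is already integral and saturated, so that the fiber product in fs log schemes requires no integralization or saturation correction. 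This is true (the congruence quotient coincides with the image of $\mathbb{N}^{r+s}$ in $\mathbb{Z}^{r+s}/\mathbb{Z}(e-f)$, which is the fs monoid of the normal affine toric variety $X_1\cdots X_r=Y_1\cdots Y_s$), and the paper relies on the same fact later (in the proof of Lemma 3.6, where it is asserted that $\tilde N\oplus_{\mathbb{N}^2}\mathbb{N}$ is already fine and saturated), but in your write-up it should be given an actual one-line argument rather than a "one checks". A very minor slip: your parenthetical "dually $1\mapsto(1,1)$" for the bottom arrow is not right as stated ($(1,1)$ goes to $2$, hence to $\varpi^2$); the map of monoids is simply $\mathbb{N}^2\to\mathbb{N}$, $(a,b)\mapsto a+b$, as you correctly use everywhere else.
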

\begin{proof}
We go back to the notation used in the proof of Lemma \ref{chart for M tilde}.
Locally on $X$, we have the following commutative diagram of log
schemes \[
\xymatrix{(X,M)\ar[d]\ar[r] & \tilde{U}\times_{\mathrm{Spec}\ \mathcal{O}_{K}[u]}\tilde{V}\times W\ar[r]\ar[d] & Z\ar[d]\\
\mathrm{(Spec}\ \mathcal{O}_{K},\mathbb{N})\ar[r] & \mathrm{(Spec}\ \mathcal{O}_{K}[u],\mathbb{N})\ar[r] & \mathrm{(Spec}\ \mathcal{O}_{K}[\tau,\sigma],\mathbb{N}^{2})}
,\]
where in the bottom row both $\tau$ and $\sigma$ are mapped to $u$,
which is in turn mapped to $0$. The second square is cartesian and
the horizontal maps in it are closed, but not exact, immersions. The
first bottom map is an exact closed immersion, while the first top
map is the composition of an etale morphism with an exact closed immersion.
The lemma follows from the commutative diagram (\ref{diagram for M tilde})
and the above diagram.
\end{proof}

\subsection{Variations on the logarithmic de Rham-Witt complex}

Define the pre-log structure $\mathbb{N}^{2}\to W_{n}[\tau,\sigma]$
given by $(a,b)\mapsto\tau^{a}\sigma^{b}$. By abuse of notation,
we write $\mbox{(Spec }W_{n}[\tau,\sigma],\mathbb{N}^{2})$ for the
log scheme endowed with the associated log structure. We have the
composite map of log schemes \[
(Y,\tilde{M})\to(\mbox{Spec }k,\mathbb{N}^{2})\to(\mbox{Spec }W_{n}[\tau,\sigma],\mathbb{N}^{2}),\]
where $\mathbb{N}^{2}\to\mathbb{N}^{2}$ is the obvious isomorphism.
We shall call $(Z,\tilde{N})$ a \emph{lifting} for this morphism
if $(Z,\tilde{N})$ is a fine log scheme such that the composite map
$(Y,\tilde{M})\to(\mbox{Spec }W_{n}[\tau,\sigma],\mathbb{N}^{2})$
factors through $f:(Y,\tilde{M})\to(Z,\tilde{N})$, which is a closed
immersion and a map $(Z,\tilde{N})\to(\mbox{Spec }W_{n}[\tau,\sigma],\mathbb{N}^{2})$,
which is log smooth. Such liftings always exists locally on $Y$ and
give rise to embedding systems as defined in paragraph 2.18 of \cite{HK}.
If $(U,\tilde{M}_{U})\to(Y,\tilde{M})$ is a covering and $(Z,\tilde{N})$
is a lifting for $(U,\tilde{M}_{U})\to(\mbox{Spec }W_{n}[\tau,\sigma]),\mathbb{N}^{2})$,
then we may define an embedding system $((U^{i},\tilde{M}_{U}^{i}),(Z^{i},\tilde{N}^{i}))$
for $(Y,\tilde{M})\to(\mbox{Spec }W_{n}[\tau,\sigma],\mathbb{N}^{2})$
by taking the fiber product of $i+1$ copies of $U$ over $Y$ and
of $i+1$ copies of $(Z,\tilde{N})$ over $(\mbox{Spec }W_{n}[\tau,\sigma],\mathbb{N}^{2})$.
Since $(Y,\tilde{M})$ is an fs log scheme, we may assume the same
for the local lifting $(Z,\tilde{N})$. 

Let $C_{(Y,\tilde{M})/(W_{n},triv)}$ be the crystalline complex associated
to the embedding system obtained from local loftings $(Z^{\cdot},\tilde{N}^{\cdot})$
and define \[
\tilde{\tilde{C}}_{Y}:=C_{(Y,\tilde{M})/(W_{n},triv)}\otimes_{W_{n}<\tau,\sigma>}W_{n}.\]

Let $\mbox{Spec }W_{n}[u]$ be endowed with the log structure associated
to $1\in\mathbb{N}\mapsto u\in W_{n}[u]$. Consider the map of log
schemes $G:(\mbox{Spec }W_{n}[u],\mathbb{N})\to(\mbox{Spec }W_{n}[\tau.\sigma],\mathbb{N}^{2})$
given by $\tau,\sigma\mapsto u$ and $(a,b)\in\mathbb{N}^{2}\mapsto a+b\in\mathbb{N}$.
The pullback of $(Y,\tilde{M})$ along $G$ is just $(Y,M)$. Let
$(Z',N')$ be the pullback of $(Z,\tilde{N})$ along $G$, equipped
with a map $f':(Y',M')\to(Z',N')$, which is the pullback of $f$.
Then $(Z',N')$ is a (local) lifting for $(Y,M)\to(\mbox{Spec }W_{n}[u],\mathbb{N})$,
and gives rise to an embedding system for this morphism. Indeed, what
we need to check is that $(Z',N')\to(\mbox{Spec }W_{n}[u],\mathbb{N})$
is log smooth and that $f'$ is a closed immersion of log schemes.
For the first we note that log smoothness is preserved under base
change in the category of log schemes and that \[
(Z',N')=(((Z,\tilde{N})\times_{G}(\mbox{Spec }W_{n}[u],\mathbb{N}))^{\mathrm{int}})^{\mathrm{sat}}\to(Z,\tilde{N})\times_{G}(\mbox{Spec }W_{n}[u],\mathbb{N})\]
is log smooth. We also note that $g:Y\to(Z\times_{\mbox{Spec }W_{n}[\tau,\sigma]}\mbox{Spec }W_{n}[u])$
is a closed immersion, since $Y\to Z$ is a closed immersion. The
morphism of schemes $Z'\to(Z\times_{\mbox{Spec }W_{n}[\tau,\sigma]}\mbox{Spec }W_{n}[u])$
is a composition of a finite morphism with a closed immersion, so
$Y\to Z'$ is a closed immersion as well. Also, $g^{*}(\tilde{N}\oplus_{\mathbb{N}^{2}}\mathbb{N})\to M$
is surjective and factors through $(f')^{*}(N')\to M$, so $(f')^{*}(N')\to M$
is surjective as well. 

We now follow the constructions in section 3.6 of \cite{HK} using
the embedding system obtained from the liftings $(Z',N')$. Let $C_{(Y,M)/(W_{n},triv)}$
be the crystalline complex associated to the composite $(Z',N')\to(W_{n},triv)$.
Define \[
\tilde{C}_{Y}:=C_{(Y,M)/(W_{n},triv)}\otimes_{W_{n}<u>}W_{n}.\]
On the other hand, let $Z''=Z'\times_{\mbox{Spec }W_{n}[u]}\mbox{Spec }W_{n}<u>$
be endowed with $N''$ the inverse image of the log structure $N'$.
Let $\mathcal{L}$ be the log structure on $\mbox{Spec }W_{n}<u>$
obtained by taking the inverse image of (the log structure associated
to) $\mathbb{N}$ on $\mbox{Spec }W_{n}[u]$. Then $(Z'',N'')$ gives
rise to an embedding system for \[
(Y,M)\to(\mbox{Spec }W_{n}<u>,\mathcal{L}),\]
with crystalline complex $C_{(Y,M)/(\mbox{Spec }W_{n}<u>,\mathcal{L})}$.
Define \[
C_{Y}:=C_{(Y,M)/(\mbox{Spec }W_{n}<u>,\mathcal{L})}\otimes_{W_{n}<u>}W_{n}.\]
Note that $C_{Y}$ is the crystalline complex $C_{(Y,M)/(W_{n},\mathbb{N})}$
with respect to the embedding system obtained from $(Z'\times_{\mbox{Spec }W_{n}[u]}\mbox{Spec}W_{n},N''')$.
As in Section 3.6 of \cite{HK}, we have an exact sequence of complexes
\begin{equation}
0\to C_{Y}[-1]\to\tilde{C}_{Y}\to C_{Y}\to0,\label{eq:exact sequence}\end{equation}
where the second arrow is $\wedge\frac{du}{u}$ and the third arrow
is the canonical projection. The monodromy operator on the crystalline
cohomology of $(Y,M)$ is induced by the connecting homomorphism of
this exact sequence. 
\begin{lem}
\label{lem:independence}Let $C_{Z}^{\cdot}$ be either one of the
complexes $\tilde{\tilde{C}}_{Y}^{\cdot}$, $\tilde{C}_{Y}^{\cdot}$
or $C_{Y}^{\cdot}$ obtained with respect to a lifting $(Z,\tilde{N})$
of some cover $U\to Y$. In the derived category, $C_{Z}^{\cdot}$
is independent of the choice of lifting $(Z,\tilde{N})$. \end{lem}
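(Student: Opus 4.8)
The plan is to reduce the statement to the standard independence-of-lifting result for crystalline complexes attached to embedding systems, as in paragraph 2.18 and Section 3.6 of \cite{HK}. The key point is that all three complexes $\tilde{\tilde{C}}^{\cdot}_Y$, $\tilde{C}^{\cdot}_Y$ and $C^{\cdot}_Y$ are, by construction, of the form $C_{(Y,M)/(S,\mathcal{L}_S)}\otimes_{A}W_n$ for an appropriate base log scheme $(S,\mathcal{L}_S)$ (respectively $(\mathrm{Spec}\ W_n,triv)$ with the $W_n\langle\tau,\sigma\rangle$-structure, $(\mathrm{Spec}\ W_n,triv)$ with the $W_n\langle u\rangle$-structure, and $(\mathrm{Spec}\ W_n\langle u\rangle,\mathcal{L})$) and a flat $A$-algebra map $A\to W_n$. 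So the first step is to recall that, for a fixed embedding system, the crystalline complex computes the log crystalline cohomology of $(Y,M)$ over the appropriate base, and that the tensor operation $\otimes_A W_n$ is just the base change along a flat ring map, hence commutes with taking cohomology and with passing to the derived category.

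Next I would set up the comparison between two liftings $(Z_1,\tilde{N}_1)$ and $(Z_2,\tilde{N}_2)$ of a common cover $U\to Y$ (or, more generally, of two covers, after passing to a common refinement). Following the argument of \cite{HK}, one forms the fiber product $(Z_1\times Z_2, \tilde N_1\oplus\tilde N_2)$ over the base log scheme; since each $Z_i$ is log smooth over the base, the diagonal-type map $U\to Z_1\times Z_2$ is again a closed immersion into a log smooth scheme, giving a third lifting that dominates both. The crystalline complex attached to $(Z_1\times Z_2,\tilde N_1\oplus\tilde N_2)$ maps quasi-isomorphically to each of the two original ones — this is the usual Poincaré-lemma argument (the relative log de Rham complex of the formal completion of one log smooth scheme inside another along a section is a resolution of the structure sheaf), which is exactly what underlies the well-definedness of crystalline cohomology via embedding systems in \cite{HK}. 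One then checks that these quasi-isomorphisms are compatible with $\otimes_A W_n$, which is immediate since $A\to W_n$ is flat.

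The remaining point is the cosimplicial/Čech bookkeeping: an embedding system is built from a \emph{cover} $U\to Y$ together with a lifting, and one must pass between covers as well as between liftings. Here I would invoke the standard fact (again from paragraph 2.18 of \cite{HK}, or from Illusie–Raynaud) that the total complex associated to the cosimplicial embedding system is independent, in the derived category, of the chosen cover, because any two covers are dominated by a common one and the Čech-to-derived-functor spectral sequence degenerates appropriately; combining this with the lifting-independence above yields the claim for all three complexes simultaneously, since the three cases differ only in the choice of base log scheme and in which divisorial log structure one puts on $Y$, and the argument is uniform in these choices.

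The main obstacle I expect is purely organizational rather than conceptual: one has three slightly different base log schemes and three different log structures ($M$, $\tilde M$, and the intermediate one) in play, together with the passage from $W_n\langle\tau,\sigma\rangle$ to $W_n\langle u\rangle$ and then to $W_n$, and one must verify that the local liftings $(Z,\tilde N)$, $(Z',N')$, $(Z'',N'')$ constructed in the previous subsection are genuinely fs log schemes, log smooth over the relevant base, so that the Poincaré lemma applies in each case. These verifications were essentially already carried out in the construction of the complexes (in particular the checks that $(Z',N')\to(\mathrm{Spec}\ W_n[u],\mathbb{N})$ is log smooth and $f'$ is a closed immersion), so the proof amounts to citing \cite{HK} for the abstract independence statement and noting that all the hypotheses have been met in our setting.
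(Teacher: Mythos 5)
Your overall strategy coincides with the paper's: work \'etale locally, compare two liftings $(Z_1,\tilde N_1)$ and $(Z_2,\tilde N_2)$ by passing to (a log-\'etale neighborhood of the diagonal in) their fiber product, apply the PD-Poincar\'e lemma to see that the crystalline complex of the dominating lifting maps quasi-isomorphically to each of the original ones, and then descend this compatibly with the operation $\otimes_A W_n$ and with the \v{C}ech bookkeeping for covers. The paper additionally makes explicit the factorization $(Y,\tilde M)\to(Z_{12},\tilde N_{12})\to(Z_1\times Z_2,\tilde N_{1\times 2})$ with the first map an exact closed immersion (via Lemma 4.10 of \cite{K}) and verifies the cocycle condition for three liftings by an explicit diagram; you gloss over the cocycle condition, which is needed to glue the local quasi-isomorphisms into a well-defined object of the derived category, so you should spell that out.

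There is, however, one genuinely incorrect step: you justify the compatibility of the comparison quasi-isomorphisms with $\otimes_A W_n$ by asserting that $A\to W_n$ is flat. For $A=W_n\langle\tau,\sigma\rangle$ or $W_n\langle u\rangle$ this map is the augmentation killing the PD-ideal generated by the divided powers of $\tau,\sigma$ (resp.\ $u$); this is a non-trivial quotient with $I\neq I^2$, hence not flat, and a quasi-isomorphism of $A$-module complexes need not remain one after $\otimes_A W_n$. The correct justification, which is the one the paper uses, is that the Poincar\'e-lemma quasi-isomorphism is locally the explicit map
\[
\omega^{\cdot}_{(Z_1,\tilde N_1)/(W_n,triv)}\otimes_{\mathcal{O}_{Z_1}}\mathcal{O}_{D_1}\to
\bigl(\omega^{\cdot}_{(Z_1,\tilde N_1)/(W_n,triv)}\otimes_{\mathcal{O}_{Z_1}}\mathcal{O}_{D_1}\bigr)\otimes_{W_n}\bigl(\Omega^{\cdot}_{W_n[t_1,\dots,t_r]}\otimes_{W_n[t_1,\dots,t_r]}W_n\langle t_1,\dots,t_r\rangle\bigr),
\]
where the extra tensor factor is in the $Z_{12}$-direction, is $W_n$-linearly homotopy equivalent to $W_n$, and is unaffected by the subsequent base change $\otimes_{W_n\langle\tau,\sigma\rangle}W_n$; the compatibility therefore follows from the explicit form of the quasi-isomorphism, not from flatness. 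Without this correction the argument as written does not establish that $\tilde{\tilde C}^{\cdot}_{Z_1}\to\tilde{\tilde C}^{\cdot}_{Z_{12}}$ (and its analogues for $\tilde C$ and $C$) are quasi-isomorphisms.
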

\begin{proof}
We may work etale locally on $Y$, in which case we have to show that
for any two liftings $(Z_{1},\tilde{N}_{1})$ and $(Z_{2},\tilde{N}_{2})$
we have a canonical quasi-isomorphism between the corresponding complexes
and moreover, that these quasi-isomorphisms satisfy the obvious cocycle
condition for three different liftings. 

First, we show that the complexes corresponding to $(Z_{1},\tilde{N}_{1})$
and $(Z_{2},\tilde{N}_{2})$ are quasi-isomorphic. We may assume that
$i_{i}:(Y,\tilde{M})\to(Z_{i},\tilde{N}_{i})$ is an exact closed
immersion for $i=1,2$. Let $i_{12}:(Y,\tilde{M})\to(Z_{1}\times_{W_{n}}Z_{2},\tilde{N}_{1\times2})$
be the diagonal immersion of $(Y,\tilde{M})$ into the fiber product
of $(Z_{1},\tilde{N}_{1})$ and $(Z_{2},\tilde{N}_{2})$ as fs log
schemes over $(W_{n},triv)$. Let $(Z_{12,}\tilde{N}_{12})$ be a
log scheme such that etale locally on $Y$ we have a factorization
of $i_{12}$ \[
(Y,\tilde{M})\stackrel{f}{\to}(Z_{12},\tilde{N}_{12})\stackrel{g}{\to}(Z_{1}\times Z_{2},\tilde{N}_{1\times2}),\]
with $g$ log etale and $f$ an exact closed immersion. This factorization
is possible by Lemma 4.10 of \cite{K}. Let $D_{i}$ be the $PD$-envelope
of $Y$ in $Z_{i}$ (again, for $i=1,2$ or $12$). (Since we have
exact closed immersions, the logarithmic $PD$-envelope coincides
with the usual $PD$-envelope in these cases.) It suffices to show
that the canonical map \begin{equation}
\omega_{(Z_{1},\tilde{N}_{1})/W_{n},triv}^{\cdot}\otimes_{\mathcal{O}_{Z_{1}}}\mathcal{O}_{D_{1}}\to\omega_{(Z_{12},\tilde{N}_{12})/W_{n},triv}^{\cdot}\otimes_{\mathcal{O}_{Z_{12}}}\mathcal{O}_{D_{12}}\label{quasi}\end{equation}
is a quasi-isomorphism. This follows from paragraph 2.21 of \cite{HK}.
For completeness, we sketch the proof here. Let $p_{1}:(Z_{12},N_{12})\to(Z_{1},N_{1})$
be the log smooth map induced by projection onto the first factor.
For any geometric point $\bar{y}$ of $Y$, the stalks at $\bar{y}$
of $N_{12}$ and $p_{1}^{*}N_{1}$ coincide, so by replacing $(Z_{12},N_{12})$
with an etale neighborhood of $\bar{y}\to Z_{12}$, we may assume
that $N_{12}=p_{1}^{*}N_{1}$. Then the map $p_{1}:Z_{12}\to Z_{1}$
is smooth in the usual sense. Since the problem is etale local on
$Y$, we may assume that $Z_{12}\simeq Z_{1}\otimes_{W_{n}}W_{n}[t_{1},\dots,t_{r}]$
for some positive integer $r$ and such that $Y$ is contained in
the closed subscheme of $Z_{12}$ defined by $t_{1}=\dots=t_{r}=0$.
As in Proposition 6.5 of \cite{K}, we also have $\mathcal{O}_{D_{12}}\simeq\mathcal{O}_{D_{1}}<t_{1},\dots,t_{r}>$,
the PD-polynomial ring over $\mathcal{O}_{D_{1}}$ in $r$ variables.
The quasi-isomorphism (\ref{quasi}) is reduced then to the standard
quasi-isomorphism \[
W_{n}\to\Omega_{W_{n}[t_{1},\dots,t_{r}]}\otimes_{W_{n}[t_{1},\dots,t_{r}]}W_{n}<t_{1},\dots,t_{r}>.\]

The quasi-isomorphism \ref{quasi} commutes with $\otimes_{W_{n}<\tau,\sigma>}W_{n}$
so it induces a quasi-isomorphism \[
\tilde{\tilde{C}}_{Z_{1}}^{\cdot}\stackrel{\sim}{\to}\tilde{\tilde{C}}_{Z_{12}}^{\cdot}.\]
Now consider the morphism $Z'_{12}\to Z_{1}'$ obtained by pulling
back $Z_{12}\to Z_{1}$ along $G$. We claim that the canonical morphisms
$\tilde{C}_{Z_{12}}^{\cdot}\to\tilde{C}_{Z_{1}}^{\cdot}$ and $C_{Z_{12}}^{\cdot}\to C_{Z_{1}}^{\cdot}$
are quasi-isomorphisms as well. This is proved in the same way as
in the case of $\tilde{\tilde{C}}$ (for $C_{Z_{12}}^{\cdot}\to C_{Z_{1}}^{\cdot}$
it amounts to proving that the logarithmic de Rham-Witt complex is
independent of the choice of embedding system). The quasi-isomorphisms
are also compatible with the canonical maps $\tilde{\tilde{C}}_{Z}^{\cdot}\to\tilde{C}_{Z}^{\cdot}\to C_{Z}^{\cdot}$. 

Note that the above result also implies that in the derived category,
$C^{\cdot}$ commutes with etale base change. Indeed, if $Y_{2}/Y_{1}$
is etale and $(Z_{1},\tilde{N}_{1})$ is a lifting for $(Y_{1},\tilde{M})\to(\mbox{Spec }W_{n}[\tau,\sigma],\mathbb{N}^{2})$
then by \cite{EGA IV} 18.1.1 we can find, locally on $Y_{2}$, an
etale morphism $Z_{2}\to Z_{1}$ such that the following diagram is
cartesian\[
\xymatrix{Y_{2}\ar[r]\ar[d] & Z_{2}\ar[d]\\
Y_{1}\ar[r] & Z_{1}}
.\]
We take $\tilde{N}_{2}$ on $Z_{2}$ to be the inverse image of $\tilde{N}_{1}.$
Then $(Z_{2},\tilde{N}_{2})$ is a lifting for $(Y_{2},\tilde{M})\to(\mbox{Spec }W_{n}[\tau,\sigma],\mathbb{N}^{2})$
and, since log differentials commute with etale base change (Prop.
3.12 of \cite{K}), $C_{(Z_{2})}^{\cdot}$ on $Y_{2}$ is just the
pullback of $C_{(Z_{2})}^{\cdot}$ on $Y_{1}$. 

We are left with veryfing the cocycle condition. The canonical quasi-isomorphism
$\gamma_{12}:C_{Z_{1}}^{\cdot}\stackrel{\sim}{\to}C_{Z_{2}}^{\cdot}$
factors through $C_{Z_{1}\times Z_{2}}^{\cdot}$, since by construction
$Z_{12}$ is log etale over $Z_{1}\times Z_{2}$ and so we have a
quasi-isomorphism $C_{Z_{1}\times Z_{2}}^{\cdot}\stackrel{\sim}{\to}C_{Z_{12}}^{\cdot}$.
Let $(Z_{3},\tilde{N}_{3})$ be another lifting. Then we have the
following commutative diagram of complexes:\[
\xymatrix{ & C_{Z_{1}\times Z_{2}\times Z_{3}}^{\cdot}\\
C_{Z_{1}\times Z_{2}}^{\cdot}\ar[ru] & C_{Z_{1}\times Z_{3}}^{\cdot}\ar[u] & C_{Z_{2}\times Z_{3}}^{\cdot}\ar[lu]\\
C_{Z_{1}}^{\cdot}\ar[r]^{\gamma_{12}}\ar[u]\ar[ru] & C_{Z_{2}}^{\cdot}\ar[r]^{\gamma_{23}}\ar[ru]\ar[lu] & C_{Z_{3}}^{\cdot}\ar[u]\ar[lu]}
,\]
where all the maps are quasi-isomorphisms. This proves the cocycle
condition. \end{proof}
\begin{cor}
The following sheaves on $Y$ are well-defined and commute with etale
base change:\[
W_{n}\tilde{\tilde{\omega}}_{Y}^{q}:=\mathcal{H}^{q}\left(\tilde{\tilde{C}}_{Y}^{\cdot}\right),W_{n}\tilde{\omega}_{Y}^{q}:=\mathcal{H}^{q}\left(\tilde{C}_{Y}^{\cdot}\right)\mbox{ and }W_{n}\omega_{Y}^{q}:=\mathcal{H}^{q}\left(C_{Y}^{\cdot}\right),\]
The sheaves $W_{n}\omega_{Y}^{q}$ make up the $q$-th terms of the
log de Rham-Witt complex associated to $(Y,M)$. We have canonical
morphisms of sheaves on $Y$:\[
W_{n}\tilde{\tilde{\omega}}_{Y}^{q}\to W_{n}\tilde{\omega}_{Y}^{q}\to W_{n}\omega_{Y}^{q}.\]

\end{cor}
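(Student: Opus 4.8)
The plan is to derive the corollary formally from Lemma~\ref{lem:independence}, the étale-local nature of the embedding-system constructions, and the comparison between log crystalline cohomology and the logarithmic de Rham--Witt complex in \cite{HK}.

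\emph{Well-definedness.} Fix $q$. Étale-locally on $Y$ a lifting $(Z,\tilde N)$ of a cover $U\to Y$ exists, so over a suitable étale cover of $Y$ each of the complexes $\tilde{\tilde C}_Y^{\cdot}$, $\tilde C_Y^{\cdot}$, $C_Y^{\cdot}$ is defined. By Lemma~\ref{lem:independence} these complexes are, in the derived category, canonically independent of the chosen lifting, the transition quasi-isomorphisms satisfying the cocycle condition of the last diagram of its proof. Applying $\mathcal{H}^q$ turns each transition quasi-isomorphism into a genuine isomorphism of sheaves and the homotopy-coherent cocycle into a strict one; hence the sheaves $\mathcal{H}^q(\tilde{\tilde C}^{\cdot})$, $\mathcal{H}^q(\tilde C^{\cdot})$, $\mathcal{H}^q(C^{\cdot})$ descend, by étale descent of sheaves, to well-defined sheaves $W_n\tilde{\tilde\omega}_Y^q$, $W_n\tilde\omega_Y^q$, $W_n\omega_Y^q$ on $Y$, independent of all choices.

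\emph{Étale base change.} The proof of Lemma~\ref{lem:independence} already records that each of these complexes commutes, in the derived category, with étale base change: for étale $h\colon Y_2\to Y_1$ and a lifting $(Z_1,\tilde N_1)$ over $Y_1$, \cite{EGA IV}~18.1.1 produces a cartesian étale lift $(Z_2,\tilde N_2)$ over $Y_2$, and log differentials commute with étale base change (Prop.~3.12 of \cite{K}), whence $Lh^{*}C_{Z_1}^{\cdot}\simeq C_{Z_2}^{\cdot}$ and likewise for $\tilde C$ and $\tilde{\tilde C}$. Since étale pullback is exact it commutes with $\mathcal{H}^q$, so $h^{*}W_n\tilde{\tilde\omega}_{Y_1}^q\simeq W_n\tilde{\tilde\omega}_{Y_2}^q$, and similarly for the other two sheaves.

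\emph{Identification and the canonical maps.} As observed just before the statement, $C_Y^{\cdot}$ is the crystalline complex $C_{(Y,M)/(W_n,\mathbb{N})}$ computed from the embedding system attached to $(Z'\times_{\mathrm{Spec}\,W_n[u]}\mathrm{Spec}\,W_n,N''')$; by the comparison between log crystalline cohomology and the log de Rham--Witt complex in Section~4 of \cite{HK}, $\mathcal{H}^q$ of this complex is exactly the $q$-th term $W_n\omega_{(Y,M)}^q$ of the log de Rham--Witt complex of $(Y,M)$. Finally, the canonical maps of complexes $\tilde{\tilde C}_Y^{\cdot}\to\tilde C_Y^{\cdot}\to C_Y^{\cdot}$ — arising from base change along $G$ and from the projection of $(\mathrm{Spec}\,W_n\langle u\rangle,\mathcal{L})$ onto the trivial base — are, by the part of the proof of Lemma~\ref{lem:independence} recording compatibility of the transition quasi-isomorphisms with $\tilde{\tilde C}^{\cdot}\to\tilde C^{\cdot}\to C^{\cdot}$, compatible with the gluing data; applying $\mathcal{H}^q$ yields the asserted global morphisms $W_n\tilde{\tilde\omega}_Y^q\to W_n\tilde\omega_Y^q\to W_n\omega_Y^q$ of sheaves on $Y$.

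\emph{Expected main obstacle.} There is no serious difficulty: the input is either formal or already contained in \cite{HK}, \cite{K}, \cite{EGA IV}. The one point that must be handled with care is the descent step — one has to be sure that the quasi-isomorphisms furnished by Lemma~\ref{lem:independence}, which a priori live only in the derived category, actually glue the individual cohomology sheaves on the nose; this works precisely because $\mathcal{H}^q$ inverts quasi-isomorphisms and converts an up-to-homotopy cocycle into a strict one.
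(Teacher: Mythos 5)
Your proposal is correct and follows essentially the same route as the paper, which states this corollary without a separate proof precisely because it is the formal consequence of Lemma \ref{lem:independence} (local existence of liftings, independence of the lifting with the cocycle condition, commutation with \'etale base change, and the recorded compatibility of the canonical maps $\tilde{\tilde{C}}^{\cdot}\to\tilde{C}^{\cdot}\to C^{\cdot}$ with the transition quasi-isomorphisms) together with the identification of $C_{Y}^{\cdot}$ with the crystalline complex $C_{(Y,M)/(W_{n},\mathbb{N})}$ and the comparison of \cite{HK}. Your explicit descent argument (taking $\mathcal{H}^{q}$ to convert the derived-category cocycle into a strict one) is exactly the intended mechanism.
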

In order to understand the monodromy $N$, we will study the short
exact sequence of complexes\[
0\to W_{n}\omega_{Y}^{\cdot}[-1]\to W_{n}\tilde{\omega}_{Y}^{\cdot}\to W_{n}\omega_{Y}^{\cdot}\to0,\]
which we obtain below from the short exact sequence (\ref{eq:exact sequence}).
In Section 4 we will construct a resolution of this short exact sequence
in terms of some subquotients of $W_{n}\tilde{\tilde{\omega}}_{Y}^{\cdot}$.
For now, since these complexes are independent of the choice of lifting,
we will fix a specific kind of lifting of $(Y,\tilde{M})$ over $(W[\tau,\sigma],\mathbb{N}^{2})$,
which we call \emph{admissible liftings,} following the terminology
used in \cite{H} and \cite{Mo}. Since $Y$ is locally etale over
\[
Y_{r,s,m}=\mbox{Spec }k[X_{1},\dots,X_{n},Y_{1},\dots,Y_{n},Z_{1},\dots Z_{m}]/(X_{1}\cdot\dots\cdot X_{r},Y_{1}\cdot\dots\cdot Y_{s}),\]
we consider the lifting \[
Z_{r,s,m}=\mbox{Spec }W[X_{1},\dots,X_{n},Y_{1},\dots,Y_{n},Z_{1},\dots Z_{m},\tau,\sigma]/(X_{1}\cdot\dots\cdot X_{r}-\tau,Y_{1}\cdot\dots\cdot Y_{s}-\sigma).\]
of $(Y_{r,s,m},\mathbb{N}^{r}\oplus\mathbb{N}^{s})/(W[\tau.\sigma],\mathbb{N}^{2}).$
The log structure on $Z_{r,s,m}$ is also induced from $\mathbb{N}^{r}\oplus\mathbb{N}^{s}$
(with the obvious structure map sending $\mathbb{N}^{r}$ to products
of the $X_{i}$ and $\mathbb{N}^{s}$ to products of the $Y_{j}$).
We let $Z/Z_{r,s,m}$ to be etale and such that the diagram\[
\xymatrix{(Y,\tilde{M})\ar[r]\ar[d] & (Z,\tilde{N})\ar[d]\\
(Y_{r,s,m},\mathbb{N}^{r}\oplus\mathbb{N}^{s})\ar[r] & (Z_{r,s,m},\mathbb{N}^{r}\oplus\mathbb{N}^{s})}
,\]
is Cartesian, with the log structures on top obtained by pullback
from the ones on the bottom. Then locally on $Y$, the complexes $W_{n}\tilde{\tilde{\omega}}_{Y}^{\cdot}$,
$W_{n}\tilde{\omega}_{Y}^{\cdot}$ and $W_{n}\omega_{Y}^{\cdot}$
are just pullbacks of the corresponding complexes on $Y_{r,s,m}$
with respect to the lifting $(Z_{r,s,m},\mathbb{N}^{r}\oplus\mathbb{N}^{s})$.
Note that admissible liftings exist locally on $Y$. 

Now we will explain the relationships between $\tilde{\tilde{C}}_{Y}^{\cdot}$,
$\tilde{C}_{Y}^{\cdot}$ and $C_{Y}^{\cdot}$. First, note that we
have the functoriality map $G^{*}\omega_{(Z,\tilde{N})/(W_{n},triv)}\to\omega_{(Z',N')/(W_{n},triv)},$
which induces a canonical map \[
C_{(Y,\tilde{M})/(W_{n},triv)}^{\cdot}\otimes_{W_{n}<\tau,\sigma>}W_{n}<u>\to C_{(Y,M)/(W_{n},triv)}^{\cdot},\]
which in turn induces a canonical map $\tilde{\tilde{C}}_{Y}^{\cdot}\to\tilde{C}_{Y}^{\cdot}$.
By composition, we also get a map $\tilde{\tilde{C}}_{Y}^{\cdot}\to C_{Y}^{\cdot}$.
We claim that we can identify $\tilde{C}_{Y}^{\cdot}$ with $\tilde{\tilde{C}}_{Y}^{\cdot}/\left(\frac{d\tau}{\tau}-\frac{d\sigma}{\sigma}\right)\wedge\tilde{\tilde{C}}_{Y}^{\cdot}$
and $C_{Y}^{\cdot}$ with $\tilde{\tilde{C}}_{Y}^{\cdot}/\left(\frac{d\tau}{\tau}\wedge\tilde{\tilde{C}}_{Y}^{\cdot}+\frac{d\sigma}{\sigma}\wedge\tilde{\tilde{C}}_{Y}^{\cdot}\right).$
We explain this in the case of $\tilde{C}_{Y}^{\cdot}$. 
\begin{lem}
\label{lem:Tilde in tems of tilde^2}We have an isomorphism \[
\tilde{\tilde{C}}_{Y}^{\cdot}/\left(\frac{d\tau}{\tau}-\frac{d\sigma}{\sigma}\right)\wedge\tilde{\tilde{C}}_{Y}^{\cdot-1}\stackrel{\sim}{\to}\tilde{C}_{Y}^{\cdot}.\]
\end{lem}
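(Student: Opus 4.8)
The plan is to work locally on $Y$ using an admissible lifting $(Z_{r,s,m},\mathbb{N}^{r}\oplus\mathbb{N}^{s})$, since both sides are independent of the choice of lifting by Lemma \ref{lem:independence} and the formation of each side commutes with etale base change. First I would recall that $\tilde{\tilde{C}}_{Y}^{\cdot}$ is the de Rham complex $\omega_{(Z,\tilde{N})/(W_{n},triv)}^{\cdot}\otimes_{\mathcal{O}_{Z}}\mathcal{O}_{D}$ tensored over $W_{n}\langle\tau,\sigma\rangle$ with $W_{n}$, where $D$ is the PD-envelope of $Y$ in $Z$; concretely, using the chart, $\omega_{(Z,\tilde{N})/(W_{n},triv)}^{1}$ is freely generated over $\mathcal{O}_{Z}$ by $dZ_{1},\dots,dZ_{m}$, by $\tfrac{d\tau}{\tau},\tfrac{d\sigma}{\sigma}$, and by the logarithmic differentials $\tfrac{dX_{i}}{X_{i}}$, $\tfrac{dY_{j}}{Y_{j}}$ subject to the single relations $\sum_{i\le r}\tfrac{dX_{i}}{X_{i}}=\tfrac{d\tau}{\tau}$ and $\sum_{j\le s}\tfrac{dY_{j}}{Y_{j}}=\tfrac{d\sigma}{\sigma}$ coming from $X_{1}\cdots X_{r}=\tau$, $Y_{1}\cdots Y_{s}=\sigma$. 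Similarly $\tilde{C}_{Y}^{\cdot}$ is built from the pullback lifting $(Z',N')$ along $G:(\mathrm{Spec}\ W_{n}[u],\mathbb{N})\to(\mathrm{Spec}\ W_{n}[\tau,\sigma],\mathbb{N}^{2})$, $\tau,\sigma\mapsto u$, tensored over $W_{n}\langle u\rangle$ with $W_{n}$.

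Next I would make the comparison explicit on differentials. Pulling back along $G$ identifies $G^{*}\tfrac{d\tau}{\tau}=G^{*}\tfrac{d\sigma}{\sigma}=\tfrac{du}{u}$, so the functoriality map $G^{*}\omega_{(Z,\tilde{N})/(W_{n},triv)}^{\cdot}\to\omega_{(Z',N')/(W_{n},triv)}^{\cdot}$ is surjective with kernel exactly the ideal generated by $\tfrac{d\tau}{\tau}-\tfrac{d\sigma}{\sigma}$. (Here one uses that $Z'$ is obtained by base change along the closed immersion $\mathrm{Spec}\ W_{n}[u]\hookrightarrow\mathrm{Spec}\ W_{n}[\tau,\sigma]$ cut out by $\tau-\sigma$ — combined with the integral/saturation operation, which does not affect the differentials in this smooth situation — so that on underlying schemes $Z'$ is just $Z$ with $\tau$ and $\sigma$ identified.) Since $\tfrac{d\tau}{\tau}-\tfrac{d\sigma}{\sigma}$ is a global closed one-form, $\left(\tfrac{d\tau}{\tau}-\tfrac{d\sigma}{\sigma}\right)\wedge(-)$ is compatible with the differential, so $\left(\tfrac{d\tau}{\tau}-\tfrac{d\sigma}{\sigma}\right)\wedge\tilde{\tilde{C}}_{Y}^{\cdot-1}$ is a subcomplex and the induced map $\tilde{\tilde{C}}_{Y}^{\cdot}/\left(\tfrac{d\tau}{\tau}-\tfrac{d\sigma}{\sigma}\right)\wedge\tilde{\tilde{C}}_{Y}^{\cdot-1}\to\tilde{C}_{Y}^{\cdot}$ is a well-defined map of complexes; I would then check it is an isomorphism degreewise, by passing to the chart and using that in the exterior algebra quotient by the ideal $(\tfrac{d\tau}{\tau}-\tfrac{d\sigma}{\sigma})$ the annihilator of $\tfrac{d\tau}{\tau}-\tfrac{d\sigma}{\sigma}$ in each degree is precisely that ideal, so no extra kernel appears. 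I would also note that the PD-envelope is unchanged ($\mathcal{O}_{D}=\mathcal{O}_{D'}$, since $Y$ sits in $Z'$ the same way it sits in $Z$), and that the operations $\otimes_{W_{n}\langle\tau,\sigma\rangle}W_{n}$ and $\otimes_{W_{n}\langle u\rangle}W_{n}$ match up under the identification $\tau,\sigma\mapsto u$.

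The main obstacle I anticipate is the bookkeeping at the two non-exact closed immersions: one must verify that forming the PD-envelope, taking the integral and saturated log scheme $(Z',N')$, and passing to logarithmic differentials all commute in the appropriate sense, so that $\tilde{C}_{Y}^{\cdot}$ genuinely computes the de Rham complex of the \emph{admissible} lifting $Z'_{r,s,m}$ — otherwise the clean quotient description could fail. Concretely the subtle point is that base change along $G$ of the saturation $(Z',N')=(((Z,\tilde{N})\times_{G}(\mathrm{Spec}\ W_{n}[u],\mathbb{N}))^{\mathrm{int}})^{\mathrm{sat}}$ should not introduce new log-differential generators beyond collapsing $\tfrac{d\tau}{\tau}$ and $\tfrac{d\sigma}{\sigma}$; this I would justify by noting the charts are the explicit ones from Lemma \ref{chart for M tilde} and the relevant monoid maps $\mathbb{N}^{r}\oplus\mathbb{N}^{s}$ receiving the diagonal $\mathbb{N}$ are already integral and saturated, so $(\cdot)^{\mathrm{int}}$ and $(\cdot)^{\mathrm{sat}}$ do nothing on charts. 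Once this is in place the degreewise isomorphism is a formal computation in exterior algebras over the PD-polynomial ring, and the compatibility with differentials is automatic since $\tfrac{d\tau}{\tau}-\tfrac{d\sigma}{\sigma}$ is closed. The same argument, mutatis mutandis with the ideal generated by $\tfrac{d\tau}{\tau}$ and $\tfrac{d\sigma}{\sigma}$, will give the companion identification of $C_{Y}^{\cdot}$.
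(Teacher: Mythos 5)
Your proposal is correct and follows essentially the same route as the paper: localize to an admissible lifting, identify $(Z',N')$ as the (already fine and saturated) pullback of $(Z,\tilde{N})$ along $G$, and show the functoriality map on log differentials is surjective with kernel the ideal generated by $\frac{d\tau}{\tau}-\frac{d\sigma}{\sigma}$ (the paper does this via the two exact sequences of Kato's Prop.\ 3.12 rather than a bare chart computation, but the content is the same). One small imprecision: $\mathcal{O}_{D'}$ is not literally equal to $\mathcal{O}_{D}$ but rather $\mathcal{O}_{D}\otimes_{W_{n}\langle\tau,\sigma\rangle}W_{n}\langle u\rangle$; this is harmless since, as you note, the two reductions $\otimes_{W_{n}\langle\tau,\sigma\rangle}W_{n}$ and $\otimes_{W_{n}\langle u\rangle}W_{n}$ agree after that identification.
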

\begin{proof}
Let $(Z,\tilde{N})$ be an admissible lifting of $(Y,\tilde{M})$
over $(\mbox{Spec }W_{n}[\tau,\sigma],\mathbb{N}^{2})$. Let $(D,\tilde{M}_{D})$
be the divided power envelope of $(Y,\tilde{M})$ in $(Z,\tilde{N})$.
Note that the kernel of the map $\mathcal{O}_{D}\to\mathcal{O}_{Y}$
is generated by $\tau^{[n]}$ and $\sigma^{[n]}$. The divided power
envelope $(D',M'_{D})$ of $(Y,M)$ in $(Z',N')$ satisfies the following
property:\[
\mathcal{O}_{D'}\simeq\mathcal{O}_{D}\otimes_{W_{n}<\tau,\sigma>}W_{n}<u>,\]
where the map $W_{n}<\tau,\sigma>\to W_{n}<u>$ is $\tau^{[n]},\sigma^{[n]}\mapsto u^{[n]}$.
The complexes $\tilde{\tilde{C}}_{Y}^{\cdot}$ and $\tilde{C}_{Y}^{\cdot}$
are defined as follows: \[
\tilde{\tilde{C}}_{Y}^{\cdot}:=(\omega_{(Z,\tilde{N})/(W_{n},triv)}^{\cdot}\otimes_{\mathcal{O}_{Z}}\mathcal{O}_{D})\otimes_{W_{n}<\tau,\sigma>}W_{n}<u>\otimes_{W_{n}<u>}W_{n}=\]
\[
=(\omega_{(Z,\tilde{N})/(W_{n},triv)}^{\cdot}\otimes_{W_{n}[\tau,\sigma]}W_{n}[u])\otimes_{\mathcal{O}_{Z'}}\mathcal{O}_{D'}\otimes_{W_{n}<u>}W_{n}\]
 and \[
\tilde{C}_{Y}^{\cdot}=(\omega_{Z',N'/W_{n},triv}^{\cdot})\otimes_{\mathcal{O}_{Z'}}\mathcal{O}_{D'}\otimes_{W_{n}<u>}W_{n}.\]
Note that since we've chosen an admissible lifting $(Z',N')$ has
$Z\times_{W_{n}[\tau,\sigma]}W_{n}[u]$ as its underlying scheme because
$\tilde{N}\oplus_{\mathbb{N}^{2}}\mathbb{N}$ is already fine and
saturated. It is enough to show that the sequence \begin{equation}
\omega_{(Z,\tilde{N})/(W_{n},triv)}^{\cdot-1}\otimes_{W_{n}[\tau,\sigma]}W_{n}[u]\stackrel{\wedge\left(\frac{d\tau}{\tau}-\frac{d\sigma}{\sigma}\right)}{\longrightarrow}\omega_{(Z,\tilde{N})/(W_{n},triv)}^{\cdot}\otimes_{W_{n}[\tau,\sigma]}W_{n}[u]\to\omega_{(Z',N')/(W_{n},triv)}^{\cdot}\to0\label{eq:tilde exact}\end{equation}
is exact, where the second map is induced by functoriality. We denote
by $G^{*}$ the pullback along $\mbox{Spec }W_{n}[u]\to\mbox{Spec }W_{n}[\tau,\sigma]$
or along $Z'\to Z$. By proposition 3.12 of \cite{K}, we have the
following diagram of exact sequences of sheaves on $Z'$ \[
\xymatrix{ & G^{*}\omega_{(\mbox{Spec }W_{n}[\tau,\sigma],\mathbb{N}^{2})/(W_{n},triv)}^{1}\otimes_{W_{n}[u]}\mathcal{O}_{Z'}\ar[d]\ar[r] & G^{*}\omega_{(Z,\tilde{N})/(W_{n},triv)}^{1}\ar[d]\ar[r] & G^{*}\omega_{(Z,\tilde{N})/(\mbox{Spec }W_{n}[\tau,\sigma],\mathbb{N}^{2})}^{1}\ar[d]\ar[r] & 0\\
0\ar[r] & \omega_{(\mbox{Spec }W_{n}[u],\mathbb{N})/(W_{n},triv)}^{1}\otimes_{W_{n}[u]}\mathcal{O}_{Z'}\ar[r] & \omega_{(Z',N')/(W_{n},triv)}^{1}\ar[r] & \omega_{(Z',N')/(\mbox{Spec }W_{n}[u],\mathbb{N})\ar[r]}^{1} & 0}
.\]
The rightmost vertical arrow is an isomorphism, since $(Z',N')$ was
obtained by pullback from $(Z,\tilde{N})$. In order to show that
the middle vertical arrow is a surjection, it is enough to check that
$\frac{du}{u}$ is in its image, but both $\frac{d\sigma}{\sigma}$
and $\frac{d\tau}{\tau}$ map to $\frac{du}{u}$ . We also see similarly
that the kernel of the middle vertical arrow is generated by $\frac{d\tau}{\tau}-\frac{d\sigma}{\sigma}$.
The exactness of (\ref{eq:tilde exact}) follows.\end{proof}
\begin{cor}
\label{cor:C in terms of tilde ^2}We have an isomorphism \[
\tilde{\tilde{C}}_{Y}^{\cdot}/\left(\frac{d\tau}{\tau}\wedge\tilde{\tilde{C}}_{Y}^{\cdot-1}+\frac{d\sigma}{\sigma}\wedge\tilde{\tilde{C}}_{Y}^{\cdot-1}\right)\stackrel{\sim}{\to}C_{Y}^{\cdot}.\]
\end{cor}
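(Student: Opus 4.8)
The plan is to mimic exactly the argument given for Lemma~\ref{lem:Tilde in tems of tilde^2}, replacing the single quotient by $\frac{d\tau}{\tau}-\frac{d\sigma}{\sigma}$ with the quotient by the submodule generated by $\frac{d\tau}{\tau}$ and $\frac{d\sigma}{\sigma}$ separately. Recall that $C_Y^{\cdot}$ was defined as the crystalline complex of $(Y,M)\to(\mbox{Spec }W_n\langle u\rangle,\mathcal{L})$, and that, as noted just before Lemma~\ref{lem:Tilde in tems of tilde^2}, this agrees with the crystalline complex of $(Y,M)\to(W_n,\mathbb{N})$ computed via the embedding system $(Z'\times_{\mbox{Spec }W_n[u]}\mbox{Spec }W_n,N''')$. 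Concretely, with $(Z,\tilde N)$ an admissible lifting of $(Y,\tilde M)$ over $(\mbox{Spec }W_n[\tau,\sigma],\mathbb{N}^2)$ and $(D,\tilde M_D)$ the divided power envelope of $(Y,\tilde M)$ in $(Z,\tilde N)$, I would write
\[
C_Y^{\cdot}=\bigl(\omega^{\cdot}_{(Z'',N''')/(W_n,triv)}\otimes_{\mathcal{O}_{Z''}}\mathcal{O}_{D''}\bigr),
\]
where $Z''$ has underlying scheme $Z\times_{W_n[\tau,\sigma]}W_n$ (the fibre over $\tau=\sigma=0$, which is the $u=0$ fibre of $Z'$), with log structure the image of $\tilde N$, and $\mathcal{O}_{D''}\simeq\mathcal{O}_D\otimes_{W_n\langle\tau,\sigma\rangle}W_n$ via $\tau^{[n]},\sigma^{[n]}\mapsto 0$. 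The same reasoning that produced the identification $\mathcal{O}_{D'}\simeq\mathcal{O}_D\otimes_{W_n\langle\tau,\sigma\rangle}W_n\langle u\rangle$ in the previous proof gives this last isomorphism.

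The heart of the argument is then the analogue of the exact sequence~\eqref{eq:tilde exact}: I would show that
\[
\omega^{\cdot-1}_{(Z,\tilde N)/(W_n,triv)}\oplus\omega^{\cdot-1}_{(Z,\tilde N)/(W_n,triv)}\xrightarrow{\ \wedge\frac{d\tau}{\tau}+\wedge\frac{d\sigma}{\sigma}\ }\omega^{\cdot}_{(Z,\tilde N)/(W_n,triv)}\longrightarrow\omega^{\cdot}_{(Z'',N''')/(W_n,triv)}\to 0
\]
is exact, the second map being the functoriality map along $Z''\to Z$. By the same relative cotangent sequence of Proposition~3.12 of \cite{K} used in the previous proof, now applied to the composite $(Z'',N''')\to(W_n,triv)$ factoring through $(\mbox{Spec }W_n,triv)$ (i.e. killing the whole $\mathbb{N}^2$-part of the log structure of the base), one has that $\omega^1_{(Z'',N''')/(W_n,triv)}$ is the quotient of $\omega^1_{(Z,\tilde N)/(W_n,triv)}\otimes_{\mathcal{O}_Z}\mathcal{O}_{Z''}$ by the submodule generated by the images of $\omega^1_{(\mbox{Spec }W_n[\tau,\sigma],\mathbb{N}^2)/(W_n,triv)}$, i.e. by $\frac{d\tau}{\tau}$ and $\frac{d\sigma}{\sigma}$; the kernel of this surjection (equivalently, the image of the base-differentials) is exactly that submodule. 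Taking exterior powers and tensoring with $\mathcal{O}_{D''}$, which is flat in the relevant sense (the previous proof already used that $\otimes_{W_n\langle\tau,\sigma\rangle}W_n$ commutes with everything in sight), yields the claimed isomorphism $\tilde{\tilde C}_Y^{\cdot}/\bigl(\frac{d\tau}{\tau}\wedge\tilde{\tilde C}_Y^{\cdot-1}+\frac{d\sigma}{\sigma}\wedge\tilde{\tilde C}_Y^{\cdot-1}\bigr)\xrightarrow{\sim}C_Y^{\cdot}$.

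The one point requiring slightly more care than in Lemma~\ref{lem:Tilde in tems of tilde^2} — and the step I expect to be the main obstacle — is verifying that the two elements $\frac{d\tau}{\tau},\frac{d\sigma}{\sigma}$ generate the kernel of $\omega^1_{(Z,\tilde N)/(W_n,triv)}\otimes\mathcal{O}_{Z''}\to\omega^1_{(Z'',N''')/(W_n,triv)}$ with no relations forced among them, so that the wedge-by-both map has the expected image and the quotient is genuinely by the sum of the two submodules rather than by some smaller submodule. Concretely one checks this on the admissible local model $Z_{r,s,m}$, where $\omega^1_{(Z_{r,s,m},\mathbb{N}^r\oplus\mathbb{N}^s)/(W_n,triv)}$ is free on $\frac{dX_i}{X_i}$, $\frac{dY_j}{Y_j}$ (for $i\le r$, $j\le s$), $dX_i$ ($i>r$), $dY_j$ ($j>s$), $dZ_k$, subject to the single relations $\sum_{i\le r}\frac{dX_i}{X_i}=\frac{d\tau}{\tau}$ and $\sum_{j\le s}\frac{dY_j}{Y_j}=\frac{d\sigma}{\sigma}$; passing to $Z''$ kills $\frac{d\tau}{\tau}$ and $\frac{d\sigma}{\sigma}$, and since these two relations involve disjoint sets of variables they impose independent conditions, so the kernel is freely the sum of the lines they span. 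Granting this, the rest is the same formal manipulation with divided power envelopes and exterior algebra as in the proof of Lemma~\ref{lem:Tilde in tems of tilde^2}, and the statement follows; I would simply remark that the proof is entirely parallel and indicate the one modification above.
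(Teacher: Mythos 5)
Your argument is correct in substance, but it takes a genuinely different (and longer) route than the paper. The paper's proof of this corollary is a two-line formal deduction: by Lemma \ref{lem:Tilde in tems of tilde^2} one has $\tilde{\tilde{C}}_{Y}^{\cdot}/\left(\frac{d\tau}{\tau}-\frac{d\sigma}{\sigma}\right)\wedge\tilde{\tilde{C}}_{Y}^{\cdot-1}\simeq\tilde{C}_{Y}^{\cdot}$, and the exact sequence (\ref{eq:exact sequence}) exhibits $C_{Y}^{\cdot}$ as the quotient of $\tilde{C}_{Y}^{\cdot}$ by $\frac{du}{u}\wedge\tilde{C}_{Y}^{\cdot-1}$; since $\frac{d\tau}{\tau}$ and $\frac{d\sigma}{\sigma}$ both map to $\frac{du}{u}$ under the first quotient, the preimage in $\tilde{\tilde{C}}_{Y}^{\cdot}$ of $\frac{du}{u}\wedge\tilde{C}_{Y}^{\cdot-1}$ is exactly $\frac{d\tau}{\tau}\wedge\tilde{\tilde{C}}_{Y}^{\cdot-1}+\frac{d\sigma}{\sigma}\wedge\tilde{\tilde{C}}_{Y}^{\cdot-1}$, and composing the two quotients gives the statement. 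You instead rerun the entire proof of the lemma with a two-generator quotient; this is a valid, self-contained alternative which does not use (\ref{eq:exact sequence}) and which buys you an explicit local verification (the independence of $\frac{d\tau}{\tau}$ and $\frac{d\sigma}{\sigma}$ on the admissible model, which you correctly single out as the only delicate point --- they involve disjoint subsets of a free basis of $\omega_{(Z,\tilde{N})/(W_{n},triv)}^{1}$, so they span a free direct summand of rank two), at the cost of repeating the Kato Prop.\ 3.12 and divided-power-envelope bookkeeping. One notational slip to fix: the third term of your claimed right-exact sequence should be the differentials of $(Z'',N''')$ relative to the log base $(\mbox{Spec }W_{n},\mathbb{N})$ (equivalently relative to $(\mbox{Spec }W_{n}<u>,\mathcal{L})$ before the final base change), not relative to $(W_{n},triv)$: over the trivial base the class $\frac{du}{u}$ survives, so your sequence would not even be a complex. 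Your parenthetical remark about killing the $\mathbb{N}^{2}$-part of the base log structure shows you intend the correct quotient, but the base should be stated accordingly.
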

\begin{proof}
This follows from the exact sequence (\ref{eq:exact sequence}) and
the Lemma \ref{lem:Tilde in tems of tilde^2}. \end{proof}
\begin{lem}
\label{global sections}The sections $\frac{d\tau}{\tau}$ and $\frac{d\sigma}{\sigma}\in W_{n}\tilde{\tilde{\omega}}_{Y}^{1}$
are global sections, independent of the choice of admissible lifting.
The same holds for $\frac{du}{u}\in W_{n}\tilde{\omega}_{Y}^{1}.$ \end{lem}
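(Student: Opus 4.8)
The plan is to verify the two assertions --- global existence and independence of the admissible lifting --- essentially by unwinding the definitions and invoking Lemma \ref{lem:independence}. First I would observe that the elements $\frac{d\tau}{\tau}$ and $\frac{d\sigma}{\sigma}$ are manifestly defined in $\omega^1_{(\mathrm{Spec}\, W_n[\tau,\sigma],\mathbb{N}^2)/(W_n,\mathrm{triv})}$, which is a free $W_n[\tau,\sigma]$-module on these two generators (the standard computation of log differentials for the log structure $\mathbb{N}^2$). Pulling back along the structure map $(Z,\tilde{N})\to(\mathrm{Spec}\, W_n[\tau,\sigma],\mathbb{N}^2)$ and then along $(Y,\tilde{M})\hookrightarrow(Z,\tilde{N})$, one obtains well-defined sections of $\omega^1_{(Z,\tilde{N})/(W_n,\mathrm{triv})}\otimes_{\mathcal{O}_Z}\mathcal{O}_D$, hence of the complex $\tilde{\tilde{C}}_Y^\cdot$ in degree $1$, and so classes in $W_n\tilde{\tilde{\omega}}_Y^1 = \mathcal{H}^1(\tilde{\tilde{C}}_Y^\cdot)$; one should note these are closed, i.e. $d\bigl(\frac{d\tau}{\tau}\bigr)=d\bigl(\frac{d\sigma}{\sigma}\bigr)=0$, which is immediate since $d\circ d=0$ and $\frac{d\tau}{\tau}=d(\log\tau)$ formally --- more precisely it is the image of the generator under the canonical map in the log de Rham complex of $\mathbb{N}^2$, which is killed by the differential. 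The same argument applied to $(\mathrm{Spec}\, W_n[u],\mathbb{N})$, whose module of log differentials is free on $\frac{du}{u}$, gives the analogous global section $\frac{du}{u}\in W_n\tilde{\omega}_Y^1$.

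Next, for the independence statement: the point is that these sections are pulled back from the base log scheme $(\mathrm{Spec}\, W_n[\tau,\sigma],\mathbb{N}^2)$, which does not depend on the lifting $(Z,\tilde{N})$ at all. More precisely, I would invoke the canonical quasi-isomorphisms produced in the proof of Lemma \ref{lem:independence}: for two admissible liftings $(Z_1,\tilde{N}_1)$ and $(Z_2,\tilde{N}_2)$, the comparison quasi-isomorphism $\tilde{\tilde{C}}_{Z_1}^\cdot \xrightarrow{\sim} \tilde{\tilde{C}}_{Z_2}^\cdot$ factors through $\tilde{\tilde{C}}_{Z_1\times Z_2}^\cdot$ and is constructed from the functoriality maps on log de Rham--Witt complexes, all of which lie over $(W_n\langle\tau,\sigma\rangle)$ and are compatible with the structure maps to $(\mathrm{Spec}\, W_n[\tau,\sigma],\mathbb{N}^2)$. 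Since $\frac{d\tau}{\tau}$ and $\frac{d\sigma}{\sigma}$ are the images of the canonical generators of $\omega^1$ of the base under these base-compatible maps, they are carried to one another by the comparison quasi-isomorphisms. Hence they define the same class in $W_n\tilde{\tilde{\omega}}_Y^1$ regardless of the choice of admissible lifting. The argument for $\frac{du}{u}\in W_n\tilde{\omega}_Y^1$ is identical, using that $\tilde{C}_Y^\cdot$ is obtained from $\tilde{\tilde{C}}_Y^\cdot$ by the base change along $G$ and the relevant liftings $(Z',N')$ all sit over $(\mathrm{Spec}\, W_n[u],\mathbb{N})$.

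I expect no serious obstacle here; this is a bookkeeping lemma whose content is that ``differential forms from the base are globally defined and canonical.'' The one point requiring a little care is to make sure that passing to cohomology $\mathcal{H}^1$ of the crystalline complex $\tilde{\tilde{C}}_Y^\cdot$ does not destroy the sections --- i.e. one must check $\frac{d\tau}{\tau}$ and $\frac{d\sigma}{\sigma}$ are cocycles, not merely elements of the degree-$1$ term. As indicated above this is formal: in the log de Rham complex of $(\mathrm{Spec}\, W_n[\tau,\sigma],\mathbb{N}^2)$ over $(W_n,\mathrm{triv})$ the differential sends $\frac{d\tau}{\tau}$ and $\frac{d\sigma}{\sigma}$ to $0$ because $\omega^2$ of this base is free on $\frac{d\tau}{\tau}\wedge\frac{d\sigma}{\sigma}$ and $d$ of a ``dlog'' form is zero (equivalently, $\frac{d\tau}{\tau}$ lifts to the degree-$1$ part of the Koszul-type complex where the differential is visibly zero on it). Pulling this back keeps it a cocycle. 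A final remark: one should also note the obvious compatibility under the canonical maps $W_n\tilde{\tilde{\omega}}_Y^1 \to W_n\tilde{\omega}_Y^1 \to W_n\omega_Y^1$, namely $\frac{d\tau}{\tau}$ and $\frac{d\sigma}{\sigma}$ both map to $\frac{du}{u}$ in $W_n\tilde{\omega}_Y^1$, which follows from the chart-level identity that $\tau,\sigma\mapsto u$ under $G$, so that $\frac{d\tau}{\tau}$ and $\frac{d\sigma}{\sigma}$ both pull back to $\frac{du}{u}$.
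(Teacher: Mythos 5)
Your construction of the classes and the verification that they are cocycles are fine, but the independence-of-lifting step --- which is the real content of the lemma --- contains a gap. You assert that the comparison quasi-isomorphisms of Lemma \ref{lem:independence} are ``compatible with the structure maps to $(\mathrm{Spec}\ W_n[\tau,\sigma],\mathbb{N}^2)$'' and hence carry the pullback of $\frac{d\tau}{\tau}$ for one lifting to the pullback for the other on the nose. This is not true: $(Z_{12},\tilde{N}_{12})$ is built from the fiber product of $(Z_1,\tilde{N}_1)$ and $(Z_2,\tilde{N}_2)$ over $(W_n,triv)$, not over $(\mathrm{Spec}\ W_n[\tau,\sigma],\mathbb{N}^2)$, so it carries two a priori different structure maps to the base, one through each projection; these agree only after restriction to $Y$. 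Consequently the two pullbacks $\frac{d\tau}{\tau}$ and $\frac{d\tau'}{\tau'}$ are genuinely different elements of $\omega^1_{(Z_{12},\tilde{N}_{12})/(W_n,triv)}\otimes_{\mathcal{O}_{Z_{12}}}\mathcal{O}_{D_{12}}$, and what must be proved is that they have the same image in $\mathcal{H}^1$. (Had the comparison maps really lived over $W_n[\tau,\sigma]$ in the way you claim, the relative form $\frac{d\tau}{\tau}$ would be of no interest; the whole complex is taken relative to $(W_n,triv)$ precisely so that these forms survive.)

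The missing argument is the one the paper borrows from Mokrane: since $\tau\in\tilde{N}_1$ and $\tau'\in\tilde{N}_2$ have the same image in $\tilde{M}$, their difference in the monoid $\tilde{N}_{12}$ is an element $m$ mapping to $0$ in $\tilde{M}$, so $v=\alpha_{12}(m)$ is a unit of $\mathcal{O}_{Z_{12}}$ restricting to $1$ on $Y$. Hence $\frac{d\tau}{\tau}-\frac{d\tau'}{\tau'}=\frac{dv}{v}$ with $v-1$ in the PD ideal, and $\frac{dv}{v}=d\log(v)$ is exact because the power series for $\log(v)$ around $1$ lies in $W_n\langle v-1\rangle\subseteq\mathcal{O}_{D_{12}}$. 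Without this step (or an equivalent one) the locally defined sections are not shown to glue, so neither global existence nor independence of the admissible lifting is established; the same issue arises verbatim for $\frac{du}{u}$. This is why the lemma is not mere bookkeeping: the map to the base is canonical only on $Y$, not on its liftings.
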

\begin{proof}
We will explain the proof only for $\frac{d\tau}{\tau}$ since the
same proof also works for $\frac{d\sigma}{\sigma}$ and $\frac{du}{u}$.
We use basically the same argument as for Lemma 3.4 of \cite{Mo},
part 3. We consider two admissible liftings of $(Y,\tilde{M})$, $(Z_{1},\tilde{N}_{1})$
and $(Z_{2},\tilde{N}_{2})$ and we let $(Z_{12},\tilde{N}_{12})$
be defined as in Lemma \ref{lem:independence}. It is enough to show
that locally on $Y$ \[
\frac{d\tau}{\tau}\in\omega_{(Z_{1},\tilde{N}_{1})/(W_{n},triv)}^{1}\otimes_{\mathcal{O}_{Z_{1}}}\mathcal{O}_{D_{1}}\]
 and \[
\frac{d\tau'}{\tau'}\in\omega_{(Z_{2},\tilde{N}_{2})/(W_{n},triv)}^{1}\otimes_{\mathcal{O}_{Z_{2}}}\mathcal{O}_{D_{2}}\]
have the same image in $\mathcal{H}^{1}(\omega_{(Z_{12},\tilde{N}_{12})/(W_{n},triv)}^{\cdot}\otimes_{\mathcal{O}_{Z_{12}}}\mathcal{O}_{D_{12}}).$ 

Note that $\frac{d\tau}{\tau}\in\tilde{N}_{1}$ and $\frac{d\tau'}{d\tau'}\in\tilde{N}_{2}$
have the same image in $\tilde{M}$. This is because locally on $Y$
we have commutative diagrams \[
\xymatrix{(Y,\tilde{M})\ar[d]\ar[r] & (Z_{i},\tilde{N}_{i})\ar[d]\\
(k,\mathbb{N}^{2})\ar[r] & (W_{n}[\tau,\sigma],\mathbb{N}^{2})}
\]
for $i=1,2$, so both $\frac{d\tau}{\tau}$ and $\frac{d\tau'}{\tau'}$
map to the image of $(1,0)\in\mathbb{N}^{2}$ in $\tilde{M}$. By
the construction of $(Z_{12},\tilde{N}_{12})$, (see the proof of
Prop. 4.10 of \cite{K}) we know that $\frac{d\tau}{\tau}-\frac{d\tau'}{\tau'}=m\in\tilde{N}_{12}$.
Moreover, if $\alpha_{12}:N_{12}\to\mathcal{O}_{Z_{12}}$ is the map
defining the log structure of $Z_{12}$ then $m$ maps to $0\in\tilde{M}$,
so $v=\alpha_{12}(m)\in\mathcal{O}_{Z_{12}}^{\times}$ maps to $1\in\mathcal{O}_{Y}$.
Therefore, \[
\frac{d\tau}{\tau}-\frac{d\tau'}{\tau'}=\frac{dv}{v},\]
for some $v\in\mathcal{O}_{D_{12}}$ for which $W_{n}<v-1>\subseteq\mathcal{O}_{D_{12}}$.
But then we see that $\frac{dv}{v}\in d(W_{n}<v-1>)$ using the fact
that the power series expansion of $\log(v)$ around $1$ belongs
to $W_{n}<v-1>$. Therefore, $\frac{d\tau}{\tau}-\frac{d\tau'}{\tau'}$
is exact and the lemma follows. 
\end{proof}
As in the classical case (\cite{IR,HK}), we can define operators
$F:W_{n+1}\tilde{\tilde{\omega}}^{q}\to W_{n}\tilde{\tilde{\omega}}^{q}$,
$V:W_{n}\tilde{\tilde{\omega}}^{q}\to W_{n+1}\tilde{\tilde{\omega}}^{q}$
and the differential $d:W_{n}\tilde{\tilde{\omega}}^{q}\to W_{n}\tilde{\tilde{\omega}}^{q+1}$,
which satisfy \[
d^{2}=0,FV=VF=p,dF=pFd,Vd=pdV\mbox{ and }FdV=V.\]
Indeed, fix local liftings $(Z_{n},\tilde{N}_{n})$ of $(Y,\tilde{M})\to(\mbox{Spec }W_{n}[\tau,\sigma],\mathbb{N}^{2})$
and denote the crystalline complex $\tilde{\tilde{C}}_{Z_{n}}^{\cdot}$
by $\tilde{\tilde{C}}_{n}^{\cdot}$. We can see that $\tilde{\tilde{C}}_{n}^{\cdot}$
is flat over $W_{n}$ in the same way as in Lemma 2.22 of \cite{HK}
(using an admissible lifting) and we have \[
\tilde{\tilde{C}}_{n}^{\cdot}\otimes_{\mathbb{Z}/p^{n}\mathbb{Z}}\mathbb{Z}/p^{m}\mathbb{Z}\stackrel{\sim}{\to}\tilde{\tilde{C}}_{m}^{\cdot}\]
for $m\leq n$. We let $F:W_{n+1}\tilde{\tilde{\omega}}^{\cdot}\to W_{n}\tilde{\tilde{\omega}}^{\cdot}$
be the map induced by $\tilde{\tilde{C}}_{n+1}^{\cdot}\to\tilde{\tilde{C}}_{n}^{\cdot}$,
$V:W_{n}\tilde{\tilde{\omega}}^{\cdot}\to W_{n+1}\tilde{\tilde{\omega}}^{\cdot}$
be the map induced by $p:\tilde{\tilde{C}}_{n}^{\cdot}\to\tilde{\tilde{C}}_{n+1}^{\cdot}$.
We define $d$ to be the connecting homomorphism in the exact sequence
of cohomology sheaves associated to the exact sequence of crystalline
complexes\[
0\to\tilde{\tilde{C}}_{n}^{\cdot}\stackrel{p^{n}}{\to}\tilde{\tilde{C}}_{2n}^{\cdot}\to\tilde{\tilde{C}}_{n}^{\cdot}\to0.\]
The same operators can be defined for $W_{\cdot}\tilde{\omega}_{Y}^{\cdot}$
and $W_{\cdot}\omega_{Y}^{\cdot}$.
\begin{lem}
\label{lem:Cartier}Let $n=1$. Locally, fix an admissible lifting
$(Z,\tilde{N})$ as above. Let $Fr$ be the relative Frobenius of
$Y/k$. We have Cartier isomorphisms\[
C^{-1}:\omega_{Y}^{q}\stackrel{\sim}{\to}\mathcal{H}^{q}(Fr_{*}\omega_{Y}^{\cdot}),\]
\[
\tilde{C}^{-1}:\omega_{(Z',N')/(k,triv)}^{q}\otimes_{k[u]}k\stackrel{\sim}{\to}\mathcal{H}^{q}(Fr_{*}(\omega_{(Z',N')/k,triv}^{\cdot}\otimes_{k[u]}k))\]
and \[
\tilde{\tilde{C}}^{-1}:\omega_{(Z,\tilde{N})/(k,triv)}^{q}\otimes_{k[t,s]}k\stackrel{\sim}{\to}\mathcal{H}^{q}(Fr_{*}(\omega_{(Z,\tilde{N})/k,triv}^{\cdot}\otimes_{k[t,s]}k)).\]
\end{lem}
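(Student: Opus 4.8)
The plan is to obtain all three isomorphisms from Kato's logarithmic Cartier isomorphism (\cite{K}, Theorem~4.12; see also \cite{HK}, (2.12)): if $(T,N)\to(\mathrm{Spec}\ k,N_{k})$ is a log smooth morphism of Cartier type with $k$ perfect, then the relative Frobenius $Fr$ induces a canonical isomorphism $\omega^{q}_{(T,N)/(k,N_{k})}\stackrel{\sim}{\to}\mathcal{H}^{q}(Fr_{*}\omega^{\cdot}_{(T,N)/(k,N_{k})})$. The work is to exhibit each of the three complexes as (the restriction to $Y$ of) the log de Rham complex of an explicit log smooth scheme of Cartier type and, for the last two, to commute the construction with the indicated base change.

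For $C^{-1}$ I would first identify $\omega^{\cdot}_{Y}=W_{1}\omega^{\cdot}_{Y}$ with the logarithmic de Rham complex $\omega^{\cdot}_{(Y,M)/(k,\mathbb{N})}$; exactly as in the non-logarithmic case (\cite{IR}, \cite{HK}), this comes out of the definition of $C_{Y}$ via an admissible lifting together with the usual divided-power Poincar\'e lemma, which trivializes the PD-polynomial variables. Since $(Y,M)\to(\mathrm{Spec}\ k,\mathbb{N})$ is log smooth (Section~3.1) and of Cartier type (Section~3.1, via saturatedness of $(X,M)/(\mathrm{Spec}\ \mathcal{O}_{K},\mathbb{N})$, cf.~\cite{Ts}), Kato's isomorphism applies directly and yields $C^{-1}$, the relative Frobenius over $(\mathrm{Spec}\ k,\mathbb{N})$ agreeing with the relative Frobenius of $Y/k$ because the log structure on $\mathrm{Spec}\ k$ is the null one.

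For $\tilde{\tilde{C}}^{-1}$ and $\tilde{C}^{-1}$ I would reduce the admissible lifting modulo $p$ and work on the underlying scheme $\bar{Z}$ of $(Z,\tilde{N})$, resp.~$\bar{Z}'$ of $(Z',N')$. \'Etale-locally $\bar{Z}\cong\mathbb{A}^{2n+m}_{k}$ (eliminate $\tau=X_{1}\cdots X_{r}$, $\sigma=Y_{1}\cdots Y_{s}$), smooth over $k$, with $\tilde{N}$ the log structure of the simple normal crossings divisor $\bigcup_{i\le r}\{X_{i}=0\}\cup\bigcup_{j\le s}\{Y_{j}=0\}$; likewise $(\bar{Z}',N')$ is an fs toric log scheme over $(k,\mathrm{triv})$. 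In both cases the morphism to $(\mathrm{Spec}\ k,\mathrm{triv})$ is log smooth and of Cartier type (the relative Frobenius acts by multiplication by $p$ on the fs monoid, which is exact), so Kato's isomorphism gives $\omega^{q}_{(\bar{Z},\tilde{N})/(k,\mathrm{triv})}\stackrel{\sim}{\to}\mathcal{H}^{q}(Fr_{\bar{Z}/k,*}\,\omega^{\cdot}_{(\bar{Z},\tilde{N})/(k,\mathrm{triv})})$ and similarly for $\bar{Z}'$. It then remains to pass to the fibre over $\tau=\sigma=0$ (resp.~$u=0$), i.e.~to apply $-\otimes_{k[t,s]}k$ (resp.~$-\otimes_{k[u]}k$). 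The terms $\omega^{q}_{(\bar{Z},\tilde{N})/(k,\mathrm{triv})}$ are locally free over $\mathcal{O}_{\bar{Z}}$, and $\mathcal{O}_{\bar{Z}}$ is flat over $k[t,s]$ by miracle flatness ($\bar{Z}$ is Cohen--Macaulay with equidimensional fibres over $\mathrm{Spec}\ k[t,s]$); moreover, by the Cartier isomorphism just obtained, the cohomology sheaves of $\omega^{\cdot}_{(\bar{Z},\tilde{N})/(k,\mathrm{triv})}$ are again locally free over $\mathcal{O}_{\bar{Z}}$, hence $k[t,s]$-flat. For a bounded complex of flat modules with flat cohomology, cohomology commutes with arbitrary base change, so $\mathcal{H}^{q}(Fr_{*}(\omega^{\cdot}_{(\bar{Z},\tilde{N})/(k,\mathrm{triv})}\otimes_{k[t,s]}k))\cong\omega^{q}_{(\bar{Z},\tilde{N})/(k,\mathrm{triv})}\otimes_{k[t,s]}k$, which is $\tilde{\tilde{C}}^{-1}$; here one also uses that $Fr_{\bar{Z}/k}$ restricts along the closed immersion $Y\hookrightarrow\bar{Z}$ to the relative Frobenius of $Y/k$, by functoriality of the relative Frobenius. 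The same argument with $\bar{Z}'$ in place of $\bar{Z}$ gives $\tilde{C}^{-1}$.

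I expect the base-change step to be the main obstacle: what makes it work is the flatness over $k[t,s]$ (resp.~$k[u]$) of the log de Rham cohomology sheaves of $(\bar{Z},\tilde{N})/(k,\mathrm{triv})$, which is not evident a priori and is exactly what the Cartier isomorphism \emph{before} reduction supplies, so the inputs must be sequenced correctly. A secondary point is that $(\bar{Z}',N')/(k,\mathrm{triv})$ is of Cartier type even though $\bar{Z}'$ is only log smooth and in general singular as a scheme; this is checked on the explicit fs toric local model, as above. Everything else is formal, and throughout one may follow the classical treatment in \cite{HK} and \cite{Mo}.
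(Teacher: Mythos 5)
Your treatment of $C^{-1}$ is fine and agrees with the paper (Kato/Hyodo--Kato for the log smooth Cartier-type morphism $(Y,M)\to(\mathrm{Spec}\ k,\mathbb{N})$). The gap is in the base-change step for $\tilde{\tilde{C}}^{-1}$ and $\tilde{C}^{-1}$. The lemma ``cohomology commutes with base change for a bounded complex of flat modules with flat cohomology'' presupposes that the differentials are linear over the base ring, and the differentials of $\omega^{\cdot}_{(\bar{Z},\tilde{N})/(k,triv)}$ are \emph{not} $k[\tau,\sigma]$-linear (e.g.\ $d(\tau f)=\tau\,df+f\tau\frac{d\tau}{\tau}$). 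The only linearity available is over $\mathcal{O}_{\bar{Z}^{(p)}}$, i.e.\ over $k[\tau',\sigma']$ with $\tau'\mapsto\tau^{p}$, $\sigma'\mapsto\sigma^{p}$. So the flat base change you may legitimately perform is along $k[\tau',\sigma']\to k$, and it computes
\[
\mathcal{H}^{q}\bigl(Fr_{*}(\omega^{\cdot}/(\tau^{p},\sigma^{p})\omega^{\cdot})\bigr)\simeq\omega^{q}_{\bar{Z}^{(p)}}/(\tau',\sigma')\omega^{q}_{\bar{Z}^{(p)}},
\]
which is the cohomology of the reduction modulo $(\tau^{p},\sigma^{p})$, not modulo $(\tau,\sigma)$. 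Passing from one to the other is exactly the non-formal content of the assertion; it is not supplied by flatness of the cohomology sheaves, which you correctly observe but which only feeds the wrong base change. (The same objection applies verbatim to your argument for $\tilde{C}^{-1}$ over $k[u]$.)

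What fills this gap in the paper is the Mayer--Vietoris/Koszul resolution
\[
\omega^{q}\otimes\mathcal{I}_{1}\mathcal{I}_{2}\to\omega^{q}\otimes\mathcal{I}_{1}\oplus\omega^{q}\otimes\mathcal{I}_{2}\to\omega^{q}\to\tilde{\tilde{C}}^{q}_{Y}\to0,
\]
where $\mathcal{I}_{1}=(\tau)$, $\mathcal{I}_{2}=(\sigma)$: each twisted subcomplex $\omega^{\cdot}\otimes\mathcal{I}$ is a genuine subcomplex (the differential preserves it thanks to the log poles) and satisfies its \emph{own} Cartier isomorphism by Deligne--Illusie 4.2.1.1 and 4.2.1.3 for products of complexes of the form $\Omega^{\cdot}_{Z_{i}/k}(\pm\log Y_{i})$; a diagram chase (using that these Cartier isomorphisms force the connecting maps to vanish) then yields $\tilde{\tilde{C}}^{-1}$, and $\tilde{C}^{-1}$ is deduced afterwards from $C^{-1}$ via the exact sequence $0\to C_{Y}^{\cdot}[-1]\to\tilde{C}_{Y}^{\cdot}\to C_{Y}^{\cdot}\to0$ rather than by another base change. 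You need some substitute for the input 4.2.1.3 (the Cartier isomorphism for $\Omega^{\cdot}(\log D)\otimes\mathcal{O}(-D)$); it cannot be extracted from the untwisted Cartier isomorphism by homological algebra alone.
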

\begin{proof}
Note that $(Y,M)/(\mbox{Spec }k,\mathbb{N})$ is log smooth of Cartier
type. The Cartier isomorphism for $W_{1}\omega_{Y}^{q}$ is then defined
in section 2.12 of \cite{HK}. Similarly, $(Z',N')/(\mbox{Spec }k,triv)$
and $(Z,\tilde{N})/(\mbox{Spec }k,triv)$ are log smooth and of Cartier
type. Thus, the morphisms $\tilde{C}^{-1}$ and $\tilde{\tilde{C}}^{-1}$
for $\tilde{C}_{Y}^{q}$ and $\tilde{\tilde{C}}_{Y}^{q}$ are induced
from the Cartier isomorphisms for these schemes. 

Since we are working locally on $Y,$ we may assume that $Y=Y_{1}\times_{k}Y_{2}$
and that the lifting $Z=Z_{1}\times Z_{2}$, where $Z_{1},Z_{2}$
are smooth over $k$ and $Y_{i}$ is a reduced normal crossings divisor
in $Z_{i}$. Let $\mathcal{I}_{i}$ be the ideal defining $Y_{i}\times_{k}Z_{3-i}$
in $Z$ for $i=1,2$. To check that $\tilde{\tilde{C}}^{-1}$ is an
isomorphism, we use the following commutative diagram of exact sequences:\[
\xymatrix{\omega_{(Z,\tilde{N})/k}^{q}\otimes\mathcal{I}_{1}\mathcal{I}_{2}\ar[r]\ar[d] & \omega_{(Z,\tilde{N})/k}^{q}\otimes\mathcal{I}_{1}\oplus\omega_{(Z,\tilde{N})/k}^{q}\otimes\mathcal{I}_{2}\ar[r]\ar[d] & \omega_{(Z,\tilde{N})/k}^{q}\ar[r]\ar[d] & \tilde{\tilde{C}}_{Y}^{q}\ar[r]\ar[d] & 0\\
\mathcal{H}^{q}(Fr_{*}\omega_{(Z,\tilde{N})/k}^{\cdot}\otimes\mathcal{I}_{1}\mathcal{I}_{2})\ar[r] & \mathcal{H}^{q}(Fr_{*}(\omega_{(Z,\tilde{N})/k}^{\cdot}\otimes\mathcal{I}_{1}\oplus\omega_{(Z,\tilde{N})/k}^{\cdot}\otimes\mathcal{I}_{2}))\ar[r] & \mathcal{H}^{q}(Fr_{*}\omega_{(Z,\tilde{N})/k}^{\cdot})\ar[r] & \mathcal{H}^{q}(F_{*}\tilde{\tilde{C}}_{Y}^{\cdot})\ar[r] & 0.}
\]
The complex $\omega_{(Z,\tilde{M})/k,triv}^{\cdot}$ is the same as
$\Omega_{Z_{1}/k}^{\cdot}(\log Y_{1})\otimes_{k}\Omega_{Z_{2}/k}^{\cdot}(\log Y_{2}),$
so it does satisfy a Cartier isomorphism, by 4.2.1.1 of \cite{DI}.
Similarly, the complexes on its left are (sums of) products of complexes
of the form $\Omega_{Z_{i}/k}^{\cdot}(\pm\log Y_{i})$ for $i=1,2$,
which also satisfy a Cartier isomorphism, by 4.2.1.3 of \cite{DI}.
Therefore, the first three vertical arrows are isomorphisms. Once
we know the exactness of the top and bottom sequence we can also deduce
that the rightmost vertical arrow is an isomorphism. The exactness
of the top row follows from the definition of $\tilde{\tilde{C}}_{Y}^{q}$.

The exactness of the bottom row follows from the cohomology long exact
sequence associated to short exact sequences from the top row combined
with the Cartier isomorphisms for the first three arrows which tell
us that the coboundary morphisms of these short exact sequences are
all $0$. Indeed, if we let $\bar{\omega}_{(Z,\tilde{N})}^{\cdot}$
be the complex obtained by completing the inclusion of complexes \[
\omega_{(Z,\tilde{N})/k}^{\cdot}\otimes\mathcal{I}_{1}\mathcal{I}_{2}\to\omega_{(Z,\tilde{N})/k}^{\cdot}\otimes\mathcal{I}_{1}\oplus\omega_{(Z,\tilde{N})/k}^{\cdot}\otimes\mathcal{I}_{2}\]
to a distinguished triangle, then we get a long exact sequence\[
\dots\to\mathcal{H}^{q}(\omega_{(Z,\tilde{N})/k}^{\cdot}\otimes\mathcal{I}_{1}\mathcal{I}_{2})\to\mathcal{H}^{q}(\omega_{(Z,\tilde{N})/k}^{\cdot}\otimes\mathcal{I}_{1})\oplus\mathcal{H}^{q}(\omega_{(Z,\tilde{N})/k}^{\cdot}\otimes\mathcal{I}_{2})\to\mathcal{H}^{q}(\bar{\omega}_{(Z,\tilde{N})}^{\cdot})\to\dots.\]
From the Cartier isomorphism for and, we deduce that \[
\mathcal{H}^{q}(\omega_{(Z,\tilde{N})/k}^{\cdot}\otimes\mathcal{I}_{1}\mathcal{I}_{2})\hookrightarrow\mathcal{H}^{q}(\omega_{(Z,\tilde{N})/k}^{\cdot}\otimes\mathcal{I}_{1})\oplus\mathcal{H}^{q}(\omega_{(Z,\tilde{N})/k}^{\cdot}\otimes\mathcal{I}_{2}),\]
so the coboundaries of the long exact sequence are all $0$. By continuing
this argument, we deduce the exactness of the entire bottom row and
this proves that $\tilde{\tilde{C}}^{-1}$ is an isomorphism. 

Now we prove that $\tilde{C}^{-1}$ is an isomorphism. We will show
that $\tilde{C}^{-1}$ is an insomorphism in degree $q$ as well.
From the short exact sequence (\ref{eq:exact sequence}), we get the
following commutative diagram with exact rows:\[
\xymatrix{0\ar[r] & C_{Y}^{q-1}\ar[r]\ar[d] & \tilde{C}_{Y}^{q}\ar[r]\ar[d] & C_{Y}^{q}\ar[r]\ar[d] & 0\\
0\ar[r] & \mathcal{H}^{q-1}(Fr_{*}C_{Y}^{\cdot})\ar[r] & \mathcal{H}^{q}(Fr_{*}\tilde{C}_{Y}^{\cdot})\ar[r] & \mathcal{H}^{q}(Fr_{*}C_{Y}^{\cdot})\ar[r] & 0}
.\]
To see that the bottom row is exact, we have to check that in the
long exact cohomology sequence associated to the top row the coboundaries
are all $0$, which is equivalent to showing surjectivity of $\mathcal{H}^{q}(Fr_{*}\tilde{C}_{Y}^{\cdot})\to\mathcal{H}^{q}(Fr_{*}C_{Y}^{\cdot})$.
However, by the top row and the Cartier isomorphism $C^{-1}$, the
composite\[
\tilde{C}_{Y}^{q}\to C_{Y}^{q}\to\mathcal{H}^{q}(Fr_{*}C_{Y}^{\cdot})\]
is surjective, so the desired map is surjective as well. Now we have
a map of short exact sequences, where the left and right vertical
maps are isomorphisms, so the middle one must be as well. 
\end{proof}
Using the Cartier isomorphisms, we can define canonical projections
$\pi:W_{n+1}\tilde{\tilde{\omega}}_{Y}^{\cdot}\to W_{n}\tilde{\tilde{\omega}}_{Y}^{\cdot}$.
The construction works in the same way for $W_{n}\tilde{\omega}_{Y}^{\cdot}$.
The definition of $\pi$ for $W_{n}\omega_{Y}^{\cdot}$ can be found
in section 1 of \cite{H} in the semistable case and in section 4
of \cite{HK} in general. The constructions in \cite{H} and in \cite{HK}
are the same, although they are formulated slighlty differently. Our
construction follows that in section 1 of \cite{H}, by first defining
a map $\mathfrak{p}:W_{n}\tilde{\tilde{\omega}}_{Y}^{\cdot}\to W_{n+1}\tilde{\tilde{\omega}}_{Y}^{\cdot}$
and then showing that $\mathfrak{p}$ is injective and its image coincides
with the image of multiplication by $p$ on $W_{n+1}\tilde{\tilde{\omega}}_{Y}^{\cdot}$.
The projection $\pi$ will then be the unique map which makes the
following diagram commute:

\[
\xymatrix{W_{n}\tilde{\tilde{\omega}}_{Y}^{\cdot}\ar[d]^{\mathfrak{p}} & W_{n+1}\tilde{\tilde{\omega}}_{Y}^{\cdot}\ar[l]^{\pi}\ar[ld]^{p}\\
W_{n+1}\tilde{\tilde{\omega}}_{Y}^{\cdot}}
.\]
The map $\mathfrak{p}:W_{n}\tilde{\tilde{\omega}}_{Y}^{i}\to W_{n+1}\tilde{\tilde{\omega}}_{Y}^{i}$
is induced from $p^{-i+1}Fr^{*}:\tilde{\tilde{C}}_{Y}^{i}\to\tilde{\tilde{C}}_{Y}^{i}$,
where $Fr:(Z,\tilde{N})\to(Z,\tilde{N})$ is a lifting of the Frobenius
endomorphism of $(Z,\tilde{N})\times_{W}k$ such that $Fr^{*}(W[\tau,\sigma])\subset W[\tau,\sigma]$.
The injectivity of $\mathfrak{p}$ and the fact that its image coincides
with that of mulriplication by $p$ are deduced as in Section 2 of
\cite{H} (or as in Lemma 6.8 of \cite{Na}) from the Cartier isomorphism
and from the fact that $\tilde{\tilde{C}}_{Y}^{\cdot}$ is $W-$torsion-free
(when we take $\tilde{\tilde{C}}_{Y}^{\cdot}$ to be the crystalline
complex associated to an embedding system for $(Y,\tilde{M}$) over
$W$). 

Now we will consider a different interpretation of the monodromy operator
$N$. Taking the cohomology sheaves of the short exact sequence \[
0\to C_{Y}^{\cdot}[-1]\to\tilde{C}_{Y}^{\cdot}\to C_{Y}^{\cdot}\to0\]
 we get a long exact sequence of sheaves on $Y$ \[
\dots\to W_{n}\omega_{Y}^{q-1}\to W_{n}\tilde{\omega}_{Y}^{q}\to W_{n}\omega_{Y}^{q}\to\dots\]
whose coboundaries are actually all $0$. This can be checked as in
Lemma 1.4.3 of \cite{H}, since it suffices to see that the induced
map on cocycles $Z^{q}(\tilde{C}_{Y})\to Z^{q}(C_{Y})$ modulo $p^{n}$
is surjective and we can use the Cartier isomorphisms in Lemma \ref{lem:Cartier}
to give an explicit formula for cocycles modulo $p^{n}$. So we have
a short exact sequence of sheaves on $Y$ \begin{equation}
0\to W_{n}\omega_{Y}^{q-1}\to W_{n}\tilde{\omega}_{Y}^{q}\to W_{n}\omega_{Y}^{q}\to0,\label{eq: W-exact sequence}\end{equation}
which is compatible with operators $\pi,F,V$ and $d$. We have a
morphism of distinguished triangles in the derived category $D(Y_{et},W)$
of sheaves of $W$-modules on $Y$: \[
\xymatrix{C_{Y}^{\cdot}[-1]\ar[r]\ar[d] & \tilde{C}_{Y}^{\cdot}\ar[r]\ar[d] & C_{Y}^{\cdot}\ar[r]\ar[d] & C_{Y}^{\cdot}\ar[d]\\
W_{n}\omega_{Y}^{\cdot}[-1]\ar[r] & W_{n}\tilde{\omega}_{Y}^{\cdot}\ar[r] & W_{n}\omega_{Y}^{\cdot}\ar[r]. & W_{n}\omega_{Y}^{\cdot}}
\]
The left and right vertical maps are defined in the proof of Theorem
4.19 of \cite{HK} and the middle one can be defined in exactly the
same way. Note that the definition of the maps in Theorem 4.19 has
a gap which is corrected in Lemma 7.18 of \cite{Na}, namely checking
that they commute with the transition morphisms $\pi:W_{n+1}\omega_{Y}^{\cdot}\to W_{n}\omega_{Y}^{\cdot}$.
The fact that the middle map commutes with the transition morphisms
$\pi:W_{n+1}\tilde{\omega}_{Y}^{\cdot}\to W_{n}\tilde{\omega}_{Y}^{\cdot}$
can be checked in the same way as in Lemma 7.18 of \cite{Na}, using
the corresponding Cartier isomorphism to check that the complexes
$W_{n}\tilde{\omega}_{Y}^{\cdot}$ give rise to formal de Rham-Witt
complexes as in definition 6.1 of loc. cit. and thus applying Corollary
6.28 (8). We also need to check that that $\lim_{\leftarrow}W_{n}\tilde{\omega}_{Y}^{1}$
is torsion-free, but we can use the fact that this is known for $\lim_{\leftarrow}W_{n}\omega_{Y}^{1}$
and the exact sequence (\ref{eq: W-exact sequence}). The first and
third vertical maps are quasi-isomorphisms by theorem 4.19 of \cite{HK},
so we get an isomorphism of distinguished triangles. Thus, the exact
sequence (\ref{eq: W-exact sequence}) induces the monodromy operator
$N$ on cohomology. 

Assume that $Y$ has an admissible lifting $\underline{Z}$ over $(W[t,s],\mathbb{N}^{2})$
and set $Z=\underline{Z}\otimes_{W}k$. We consider a few more variations
on the de Rham Witt complex, which we will only define locally on
$Z$. Let $W_{n}\Omega_{Z}^{\cdot}$ be the de Rham Witt complex of
$Z$. Let \[
Y^{1}=\mathrm{Spec}\ k[X_{1},\dots,X_{n},Y_{1},\dots,Y_{n},Z_{1},\dots,Z_{m}]/X_{1}\cdot\dots\cdot X_{r}\]
and \[
Y^{2}=\mathrm{Spec}\ k[X_{1},\dots,X_{n},Y_{1},\dots,Y_{n},Z_{1},\dots,Z_{m}]/Y_{1}\cdot\dots\cdot Y_{s}.\]
Each $Y^{i}$ is a normal crossings divisor in $Z_{r,s,m}\times_{W}k$.
Let $\mathcal{D}_{n}^{i}$ be the structure sheaf of the divided power
envelope of $Y^{i}$ in $Z_{r,s,m}$ and $\mathcal{ID}_{n}^{i}=\ker(\mathcal{D}_{n}^{i}\to\mathcal{O}_{Y^{i}}).$
For $i=1,2$ let $W_{n}\Omega_{Z}^{\cdot}(-\log Y^{i})$ be the (pullback
to $Z$) of the {}``compact support'' version of de Rham Witt complex
of $Z_{r,s,m}$ with respect to $Y^{i}$. This complex was introduced
by Hyodo in section 1 of \cite{H} and it is defined by \[
W_{n}\Omega_{Z_{r,s,m}}^{q}(-\log Y^{i})=\mathcal{H}^{q}(\Omega_{Z/W_{n}}^{\cdot}(\log Y^{i})\otimes_{\mathcal{O}_{Z_{r,s,m}}}\mathcal{ID}_{n}^{i})\]
Let $W_{n}\Omega_{Z}^{\cdot}(-\log Y^{1}-\log Y^{2})$ be the pullback
from $Z_{r,s}$ to $Z$ of the complex defined by \[
W_{n}\Omega_{Z_{r,s,m}}^{q}(-\log Y^{1}-\log Y^{2}):=\mathcal{H}^{q}(\omega_{Z_{r,s,m},\mathbb{N}^{r}\oplus\mathbb{N}^{s}/W_{n}}^{\cdot}\otimes_{\mathcal{O}_{Z}}\mathcal{I}\mathcal{D}_{1}\mathcal{ID}_{2}).\]
This third complex is meant to approximate a product of complexes
of the form $W_{n}\Omega_{Z}(-\log Y)$. When $n=1,$ consider $Z^{1}=\mbox{Spec }k[X_{1},\dots,X_{n},t]/(X_{1}\cdot\dots\cdot X_{r}-t)$,
$Z^{2}=\mbox{Spec }k[Y_{1},\dots,Y_{n},u]/(Y_{1}\cdot\dots\cdot Y_{s}-u)$
and $Z^{3}=\mbox{Spec }k[Z_{1},\dots,Z_{m}]$. Then \begin{equation}
W_{1}\Omega_{Z_{r,s,m}}^{\cdot}(-\log Y^{1}-\log Y^{2})\simeq\Omega_{Z^{1}/k}^{\cdot}(-\log Y^{1})\otimes_{k}\Omega_{Z^{2}/k}^{\cdot}(-\log Y^{2})\otimes_{k}\Omega_{Z^{3}/k}^{\cdot}.\label{product formula}\end{equation}
All these also are endowed with operators $F,V$, differential $d$
and projection $\pi$, and they also satisfy a Cartier isomorphism. 
\begin{lem}
\label{tensor with R_n}Let $W_{n}\Omega^{\cdot}$ be either of the
complexes $W_{n}\Omega_{Z}^{\cdot},W_{n}\Omega_{Z}^{\cdot}(-\log Y^{i})$
for $i=1,2$ or $W_{n}\Omega_{Z}^{\cdot}(-\log Y^{1}-\log Y^{2})$.
Let \[
W\Omega^{\cdot}=\lim_{\leftarrow}W_{n}\Omega^{\cdot}.\]
Then $W\Omega^{\cdot}\otimes_{\mathbb{R}}^{L}\mathbb{R}_{n}=W_{n}\Omega^{\cdot}$. \end{lem}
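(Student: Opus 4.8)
The plan is to recognize each of the four pro-complexes $(W_n\Omega^\cdot)_n$ as a \emph{formal de Rham-Witt complex} over the Raynaud ring $\mathbb{R}$ (the ring of \cite{IR}), in the sense of Definition 6.1 of \cite{Na}, and then to deduce the identity $W\Omega^\cdot\otimes_{\mathbb{R}}^L\mathbb{R}_n=W_n\Omega^\cdot$ from Corollary 6.28 of loc.\ cit., exactly as the complex $W_n\tilde\omega^\cdot_Y$ was handled in the discussion above (where part (8) of that corollary was invoked for the compatibility with $\pi$). Thus the real content is the verification of the axioms of a formal de Rham-Witt complex for $W_n\Omega^\cdot_Z$, for $W_n\Omega^\cdot_Z(-\log Y^i)$ with $i=1,2$, and for $W_n\Omega^\cdot_Z(-\log Y^1-\log Y^2)$.

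Almost all of the needed structure is already available. Each of these complexes carries operators $F$, $V$, a differential $d$ and a restriction $\pi$, satisfying the standard Raynaud-ring relations $d^2=0$, $FV=VF=p$, $dF=pFd$, $Vd=pdV$, $FdV=V$, and satisfies a Cartier isomorphism in every degree, as recorded immediately before the statement; for $W_n\Omega^\cdot_Z$ with $Z$ smooth over $k$ this is the ordinary de Rham-Witt complex, whose formal de Rham-Witt axioms are established in \cite{IR}, for the compact-support variants it is due to Hyodo (Section 1 of \cite{H}), and for the third complex it reduces to those cases through the product decomposition of formula \ref{product formula}. Since $\pi$ is surjective the pro-systems satisfy the Mittag-Leffler condition, so the derived inverse limit is computed by $W\Omega^\cdot=\lim_{\leftarrow}W_n\Omega^\cdot$; and the exactness of the multiplication-by-$\mathfrak p$ sequences, obtained when $\pi$ was constructed, supplies the flatness of each $W_n\Omega^q$ over $\mathbb{R}_n$ that the definition requires.

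The hypothesis that genuinely has to be verified by hand here is the $p$-torsion-freeness of $\lim_{\leftarrow}W_n\Omega^1$, the analogue of what was checked above for $W_n\tilde\omega^1_Y$ via $W_n\omega^1_Y$. For $W_n\Omega^\cdot_Z$ this is the classical torsion-freeness of the de Rham-Witt complex of a smooth scheme \cite{IR}. For the compact-support complexes one reduces, \'etale-locally, to the torsion-freeness of Hyodo's complexes $W_n\Omega^\cdot_{Z^i}(-\log Y^i)$ and of the smooth factor $W_n\Omega^\cdot_{Z^3}$: for $n=1$ this is the decomposition in formula \ref{product formula}, and for general $n$ one bootstraps by induction, using the exactness of the multiplication-by-$\mathfrak p$ sequences and checking at each step that the product description is compatible with the transition maps $\pi$ (which follows from the functoriality of the constructions). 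I expect this torsion-freeness reduction to be the main obstacle, since it is the only point at which one cannot simply cite a structural result for the precise complexes at hand; with it in place, the remainder is the formal matter of transcribing the already-established operators and Cartier isomorphisms into Definition 6.1 of \cite{Na} and invoking Corollary 6.28.
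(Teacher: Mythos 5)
Your route -- packaging each $W_{n}\Omega^{\cdot}$ as a formal de Rham--Witt complex in the sense of \cite{Na} and citing his general theorem -- is viable, and is in fact the alternative recorded in the Remark immediately after the proof in the paper (check properties 6.0.1--6.0.5 of \cite{Na} for $\Omega_{\underline{Z}}^{\cdot}$, $\Omega_{\underline{Z}}^{\cdot}(-\log Y^{i})$ and $\Omega_{\underline{Z}}^{\cdot}(-\log Y^{1}-\log Y^{2})$, deduce the analogue of his Proposition 6.27, and apply Theorem 6.24). The paper's main proof is more hands-on: it follows Lemma 1.3.3 of \cite{Mo}, resolving $\mathbb{R}_{n}$ by the standard flat resolution $0\to\mathbb{R}\to\mathbb{R}\oplus\mathbb{R}\to\mathbb{R}\to\mathbb{R}_{n}\to0$ and verifying directly that the induced sequence $0\to W\Omega^{i-1}\to W\Omega^{i-1}\oplus W\Omega^{i}\to W\Omega^{i}\to W_{n}\Omega^{i}\to0$ is exact. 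Either route works; yours outsources the bookkeeping to \cite{Na} at the price of having to verify his axioms for three nonstandard complexes.

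The gap is that you have misidentified where the work lies, and the step you do flag is not the one that needs flagging. Torsion-freeness of $\lim_{\leftarrow}W_{n}\Omega^{1}$ is not an input to the statement $W\Omega^{\cdot}\otimes_{\mathbb{R}}^{L}\mathbb{R}_{n}=W_{n}\Omega^{\cdot}$; that hypothesis enters elsewhere in the paper (for Corollary 6.28 (8) of \cite{Na}, i.e.\ for comparing $W_{n}\tilde{\omega}_{Y}^{\cdot}$ with the crystalline complex). What the exactness of the four-term sequence actually requires, besides injectivity of $p$ on $W\Omega^{\cdot}$ (which is formal from $p=\mathfrak{p}\circ\pi_{n}$ with $\mathfrak{p}$ injective, in every degree), is the explicit description of cocycles modulo $p^{n}$ in the lifted complexes,
\[
d^{-1}(p^{n}\Omega_{\underline{Z}}^{i+1})=\sum_{k=0}^{n}p^{k}f^{n-k}\Omega_{\underline{Z}}^{i}+\sum_{k=0}^{n-1}f^{k}d\Omega_{\underline{Z}}^{i-1},
\]
i.e.\ formula A of \cite{H} (the analogue of 0.2.3.13 of \cite{I}). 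This is what identifies $\ker\pi$ with $V^{n}+dV^{n}$ and gives exactness at the middle term (equivalently, it is the substance of Nakkajima's axioms 6.0.1--6.0.5); it is a statement for all $n$ and does not follow formally from the $n=1$ Cartier isomorphism -- it must be proved for each of the three complexes by the argument of \cite{H}, with the third one reduced to the first two via the product formula (\ref{product formula}). Your proposal never mentions this description, so the proof is incomplete at exactly the point where the content of the lemma sits.
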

\begin{proof}
For $n=1$, and $W_{n}\Omega_{Z}^{\cdot}$ and $W_{n}\Omega_{Z}^{\cdot}(-\log Y^{i})$
we have Cartier isomorphisms\[
W_{1}\Omega^{i}\stackrel{\sim}{\to}\mathcal{H}^{i}(F_{*}W_{1}\Omega^{\cdot}),\]
by result 4.2.1.3 in \cite{DI}. For $W_{n}\Omega_{Z}^{\cdot}(-\log Y^{1}-\log Y^{2})$
the Cartier isomorphism follows from the product formula (\ref{product formula})
and from the Cartier isomorphisms above. Let $\mathcal{Z}_{n}=\underline{Z}\times_{W}W_{n}$.
By abuse of notation, we write $\Omega_{Z_{n}}^{\cdot}$ for the complex
of sheaves of $W_{n}$-modules such that \[
W_{n}\Omega^{i}=\mathcal{H}^{i}(\Omega_{\mathcal{Z}_{n}}^{\cdot}).\]
In fact, we have complexes $\Omega_{\underline{Z}}^{\cdot},$ $\Omega_{\underline{Z}}^{\cdot}(-\log Y^{i})$
or $\Omega_{\underline{Z}}^{\cdot}(-\log Y^{1}-\log Y^{2})$) which
give the corresponding complexes $\Omega_{\mathcal{Z}_{n}}^{\cdot}$,
$\Omega_{\mathcal{Z}_{n}}^{\cdot}(-\log Y^{i})$ or $\Omega_{\mathcal{Z}_{n}}^{\cdot}(-\log Y^{1}-\log Y^{2})$)
when reduced modulo $p^{n}$. We also denote any of the initial complexes
over $W$ as $\Omega_{\underline{Z}}^{\cdot}$. Then there is an explicit
description of cocycles modulo $p^{n}$, which is given by \[
d^{-1}(p^{n}\Omega_{\underline{Z}}^{i+1})=\sum_{k=0}^{n}p^{k}f^{n-k}\Omega_{\underline{Z}}^{i}+\sum_{k=0}^{n-1}f^{k}d\Omega_{\underline{Z}}^{i-1},\]
where $f:\Omega_{\underline{Z}}^{i}\to\Omega_{\underline{Z}}^{i}$
is defined by $f=Fr/p^{i}$. This is the same as formula A from editorial
comment 11 in \cite{H} and is proven in the same way as in that paper
and in the same way as in the classical crystalline cohomology case
(see 0.2.3.13 of \cite{I}). 

As in the case of $W_{n}\omega_{Y}$, $W_{\cdot}\Omega^{\cdot}$ (and
$W\Omega^{\cdot})$ are endowed with a differential $d,$ operators
$F,V$ satisfying the usual relations and a canonical projection $\pi_{n}:W_{n+1}\Omega^{\cdot}\to W_{n}\Omega^{\cdot}$
such that $\mathfrak{p}\circ\pi_{n}$ coincides with multiplication
by $p$ on $W_{n+1}\Omega^{\cdot}$. 

We claim that the lemma follows from the Cartier isomorphism, from
the description of cocycles modulo $p^{n}$ in $\Omega_{\underline{Z}}^{\cdot}$
and from the formal properties of $W_{n}\Omega^{\cdot}$. The proof
is the same as for Lemma 1.3.3 of \cite{Mo}. We outline the argument
in order to show that it applies to our case as well. To prove the
desired result, we use the flat resolution of $\mathbb{R}_{n}$ as
an $\mathbb{R}$-module given by \[
0\to\mathbb{R}\stackrel{(F^{n},-F^{n}d)}{\longrightarrow}\mathbb{R}\oplus\mathbb{R}\stackrel{dV^{n}+V^{n}}{\longrightarrow}\mathbb{R}\to\mathbb{R}_{n}\to0\]
and it suffices by Corollary 1.3.3 of \cite{IR} to prove that the
sequence\[
0\to W\Omega^{i-1}\stackrel{(F^{n},-F^{n}d)}{\longrightarrow}W\Omega^{i-1}\oplus W\Omega^{i}\stackrel{dV^{n}+V^{n}}{\longrightarrow}W\Omega^{i}\to W_{n}\Omega^{i}\to0\]
is exact. The last map is the canonical projection $\pi:W\Omega^{i}\to W_{n}\Omega^{i}$. 

Exactness at the first term follows from the fact that multiplication
by $p$ (and hence also $F$) is injective on $W\Omega^{\cdot}$.
Indeed, multiplication by $p$ on $W_{n}\Omega^{\cdot}$ factors as
$\mathfrak{p}\circ\pi_{n}$ and $\mathfrak{p}$ is injective by definition,
so if $p(x_{n})=0$ for all $n$ then $\pi_{n}(x_{n})=x_{n-1}=0$
for all $n$, so $x=(x_{n})=0$. 

Exactness at the last term is the statement that $\pi$ is surjective,
which follows by construction, since $p=\mathfrak{p}\circ\pi$, $\mathfrak{p}$
is injective and the image of $\mathfrak{p}:W_{n}\Omega^{\cdot}\to W_{n+1}\Omega^{\cdot}$
coincides with the image of multiplication by $p$. 

Now we check that $\ker\pi=dV^{n}W\Omega^{\cdot}+V^{n}W\Omega^{\cdot}$.
Recall that $\pi_{n}:W_{n+1}\to W_{n}$ is the canonical projection.
It is enough to show that $\ker\pi_{n}=dV^{n}W_{1}\Omega^{\cdot}+V^{n}W_{1}\Omega^{\cdot}$.
First, if $x=V^{n}a+dV^{n}b\in W_{n+1}\Omega,$ it suffices to check
that $px=0$ and indeed $px=FV^{n+1}a+dFV^{n+1}b=0$. Now, let $[x]_{n+1}\in\ker\pi_{n}$,
where $x$ is an element of $\Omega_{\underline{Z}}^{\cdot}$ modulo
$p^{n+1}$. Then $[px]_{n+1}=p[x]_{n+1}=0,$ so it must be the case
that $px=p^{n+1}a+db$. We get $db=0$ mod $p$, so by the description
of cocycles mod $p$ we have $b=pb'+Fb''+db''$, so that $db=pdb'+pFdb''$.
Thus, \[
[x]_{n+1}=[p^{n}a]_{n+1}+[db']_{n+1}+[Fdb'']_{n+1}=\]
\[
=V^{n}[a]_{n+1}+d[p^{n}Fb'']_{n+1}=V^{n}[a]+dV^{n}[Fb''].\]

Now we check exactness at the second term. First, note that the sequence\[
W_{2n}\Omega^{q-1}\stackrel{F^{n}}{\to}W_{n}\Omega^{q-1}\stackrel{d}{\to}W_{n}\Omega^{q}\]
is exact, which is proved in the same way as Lemma 1.3.4 of \cite{Mo},
by taking the long exact sequence of cohomology sheaves of the short
exact sequence \[
0\to\Omega_{\underline{Z}}^{\cdot}/p^{n}\Omega_{\underline{Z}}^{\cdot}\stackrel{p^{n}}{\to}\Omega_{\underline{Z}}^{\cdot}/p^{2n}\Omega_{\underline{Z}}^{\cdot}\to\Omega_{\underline{Z}}^{\cdot}/p^{n}\Omega_{\underline{Z}}^{\cdot}\to0.\]
We note that the proof of the analogous statement in the classical
case in \cite{I} I (3.21) is wrong and corrected in \cite{IR} II
(1.3). Nakkajima proves this statement for formal de Rham-Witt complexes
in \cite{Na} 6.28 (6), using the same argument as Lemma 1.3.4 of
\cite{Mo}. 

We now claim that the projection \[
W\Omega^{\cdot}/p^{n}W\Omega^{\cdot}\to W_{n}\Omega^{\cdot}\]
is a quasi-isomorphism. This implies that \[
d^{-1}(p^{n}W\Omega^{q})=F^{n}W\Omega^{q-1}.\]
so if $dV^{n}x+V^{n}y=0$, then $dx+p^{n}y=0$, which in turn implies
$x=F^{n}z$ and $y=-F^{n}dz$ for some $z\in W\Omega^{q-1}$. This
checks exactness at the second term. Moreover, the fact that \[
W\Omega^{\cdot}/p^{n}W\Omega^{\cdot}\to W_{n}\Omega^{\cdot}\]
is a quasi-isomorphism follows in the same way as corollary 3.17 of
\cite{I}, boiling down to the Cartier isomorphism and to the description
of $\ker\pi$ as $dV^{n}+V^{n}$.\end{proof}
\begin{rem}
We note that one can use the Cartier isomorphisms to check properties
6.0.1 through 6.0.5 of \cite{Na} for $\Omega_{\underline{Z}}^{\cdot},\Omega_{\underline{Z}}^{\cdot}(-\log Y^{i})$
and $\Omega_{\underline{Z}}^{\cdot}(-\log Y^{1}-\log Y^{2})$, thus
proving the analogue of Proposition 6.27 of loc. cit. for all three
complexes. Then Theorem 6.24 of \cite{Na} also implies Lemma \ref{tensor with R_n}. 
\end{rem}

\subsection{The weight filtration}

The goal of this section is to define a double filtration $P_{k,l}$
on $W\tilde{\tilde{\omega}}_{Y}^{\cdot}$, which will be an analogue
of the weight filtration defined by Mokrane on $W_{n}\tilde{\omega}_{Y}^{\cdot}$
in the semistable case (see section 3 of \cite{Mo}). 

Let $(Z,\tilde{N})$ be an admissible lifting of $(Y,\tilde{M})$
over $(W[\tau,\sigma],\mathbb{N}^{2})$. We know that such liftings
exist etale locally. Let $\mathcal{Z}_{n}=Z\times_{W}W_{n}$. Let
$\tilde{N}_{1}$ be the log structure on $Z$ (or $\mathcal{Z}_{n}$)
obtained by pulling back the log structure on $Z_{r,s,m}$ associated
to \[
\mathbb{N}^{r}\to W[X_{1},\dots,X_{n},Y_{1},\dots,Y_{n},Z_{1},\dots Z_{m}]\]
\[
(0,\dots,0,1,0,\dots,0)\mapsto X_{i}\]
when $1$ is in the $i$th position. Define $\tilde{N}_{2}$ analogously.
The pullback of $\tilde{N}_{i}$ to $Y$ is the same as $\tilde{M}_{i}$.
For $i=1,2$, we have maps of sheaves of monoids $\tilde{N}_{i}\to\tilde{N}$. 

We define the following filtration on $\omega_{(\mathcal{Z}_{n},\tilde{N})/(W_{n},triv)}^{\cdot}$:
\[
P_{i,j}\omega_{(\mathcal{Z}_{n},\tilde{N})/(W_{n},triv)}^{q}:=\mbox{Im}(\omega_{(\mathcal{Z}_{n},\tilde{N}_{1})/(W_{n},triv)}^{i}\otimes\omega_{(\mathcal{Z}_{n},\tilde{N_{2}})/(W_{n},triv)}^{j}\otimes\Omega_{\mathcal{Z}_{n}/k}^{q-i-j}\to\omega_{(\mathcal{Z}_{n},\tilde{N})/(W_{n},triv)}^{q})\]
for $i,j\geq0$ and $i+j\leq q$. This filtration respects the differential
and induces a filtration $P_{i,j}\tilde{\tilde{C}}_{Y}^{\cdot}$ on
$\tilde{\tilde{C}}_{Y}^{\cdot}$ (which can be thought of as a quotient
of $\omega_{(\mathcal{Z}_{n},\tilde{M})/(W_{n},triv)}^{\cdot}$, as
in the proof of Lemma \ref{lem:Cartier}). Note that if we let \[
P_{k}\omega_{(\mathcal{Z}_{n},\tilde{N})/(W_{n},triv)}^{q}=\mbox{Im}(\omega_{(\mathcal{Z}_{n},\tilde{N})/(W_{n},triv)}^{k}\otimes\Omega_{\mathcal{Z}_{n}/k}^{q-k}\to\omega_{(\mathcal{Z}_{n},\tilde{N})/(W_{n},triv)}^{q})\]
then $ $$P_{k}$ is the weight filtration defined in 1.1.1 of \cite{Mo}
and $P_{i,j}\omega_{(\mathcal{Z}_{n},\tilde{N})/(W_{n},triv)}^{\cdot}\subset P_{i+j}\omega_{(\mathcal{Z}_{n},\tilde{N})/(W_{n},triv)}^{\cdot}$. 

For $i=1,\dots,r$, let $D_{1,i}$ be the pullback to $Z$ of the
divisor of $Z_{r,s,m}$ obtained by setting $X_{i}=0$. Similarly,
for $i=1,\dots,s$, let $D_{2,i}$ be the pullback to $Z$ of the
divisor of $Z_{r,s,m}$ obtained by setting $Y_{i}=0.$ For $i,j\geq0$
let $D^{(i,j)}$ be the disjoint union of $ $\[
D_{1,k_{1}}\times_{Z}\dots\times_{Z}D_{1,k_{i}}\times_{Z}D_{2,l_{1}}\times_{Z}\dots\times_{Z}D_{2,l_{j}},\]
over all $k_{1},\dots,k_{i}\in\{1,\dots,r\}$ and $l_{1},\dots,l_{j}\in\{1,\dots,s\}$.
And let $\tau_{i,j}:D^{(i,j)}\to Z$ be the obvious morphism, with
$\mathcal{D}_{n}^{(i,j)},\tau_{i,j}$ the pullbacks to $\mathcal{Z}_{n}$
. Let \[
\mbox{Gr}_{i,j}\omega_{(\mathcal{Z}_{n},\tilde{N})/(W_{n},triv)}^{q}:=P_{i,j}\omega_{(\mathcal{Z}_{n},\tilde{N})/(W_{n},triv)}^{q}/(P_{i-1,j}\omega_{(\mathcal{Z}_{n},\tilde{N})/(W_{n},triv)}^{q}+P_{i,j-1}\omega_{(\mathcal{Z}_{n},\tilde{N})/(W_{n},triv)}^{q}).\]

For $i,j\geq1$ we will define a morphism of sheaves\[
\mbox{Res}:\mbox{Gr}_{i,j}\omega_{(\mathcal{Z}_{n},\tilde{N})/(W_{n},triv)}^{q}\to(\tau_{i,j})_{*}\Omega_{\mathcal{D}_{n}^{(i,j)}/W_{n}}^{q-i-j},\]
which extends to a morphism of complexes. If $\omega=\alpha\wedge\frac{dX_{k_{1}}}{X_{k_{1}}}\wedge\dots\wedge\frac{dX_{k_{i}}}{X_{k_{i}}}\wedge\frac{dY_{l_{1}}}{Y_{l_{1}}}\wedge\dots\wedge\frac{dY_{k_{j}}}{Y_{k_{j}}}$
is a local section of $P_{i,j}\omega_{(\mathcal{Z}_{n},\tilde{N})/(W_{n},triv)}^{q}$
with $k_{1}<\dots<k_{i}$ and $l_{1}<\dots<l_{j}$, then \[
\mbox{Res}(\omega):=\alpha|_{D_{1,k_{1}}\times_{Z}\dots\times_{Z}D_{1,k_{i}}\times_{Z}D_{2,l_{1}}\times_{Z}\dots\times_{Z}D_{2,l_{j}}}.\]
This factors through $P_{i-1,j}+P_{i,j-1}$ and extends to a global
map of sheaves. 

Alternatively, we can follow the construction in section 3 of chapter
II of \cite{D}. Let $\mathcal{D}_{n}^{k}$ be the disjoint union
of intersections of $k$ divisors $D_{j,k_{i}}$ with $j=1,2$ and
$k_{i}\in\{1,\dots,n\}$. These intersections are in one-to-one correspondence
with images of injections \[
f:\{1,\dots,k\}\to\{1,\dots,n\}\cup\{1,\dots,n\}\]
and so we denote one of these $k$ intersections by $\mathcal{D}_{n}^{f}$
(even though it only really depends on $\mbox{Im}f$). We have $\mathcal{D}_{n}^{k}=\bigsqcup_{\substack{i+j=k\\
i,j\geq0}
}\mathcal{D}_{n}^{i,j}=\bigsqcup_{\mbox{Im}f}\mathcal{D}_{n}^{f}.$ Let $\tau_{f}:\mathcal{D}_{n}^{f}\to\mathcal{Z}_{n}$ be the closed
immersion. In 3.5.2 of \cite{D}, a morphism \[
\rho_{1}:(\tau_{f})_{*}\Omega_{\mathcal{D}_{n}^{f}}^{q-k}\to P_{k}\omega_{(\mathcal{Z}_{n},\tilde{N})/(W_{n},triv)}^{q}/P_{k-1}\]
(and then a morphism $\rho_{2}$, which dependes on an ordering of
$\{1,\dots,n\}\cup\{1,\dots,n\}$) is associated to each such injection
and the sum of $\rho_{2}$ over all injections $f$ determines an
isomorphism\[
\rho:(\tau_{k})_{*}\Omega_{\mathcal{D}_{n}^{k}/W_{n}}^{\cdot}[-k]\stackrel{\sim}{\to}P_{k}\omega_{(\mathcal{Z}_{n},\tilde{N})/(W_{n},triv)}^{q}/P_{k-1}\]
 by Proposition 3.6 of Chapter II of \cite{D}.

We are only interested in injections $q_{i,j}:\{1,\dots,i+j\}\to\{1,\dots,n\}\cup\{1,\dots,n\}$
with image of cardinality $i$ in the first $\{1,\dots,n\}$ term
and cardinality $j$ in the second $\{1,\dots,n\}$ term. We let $\mbox{Res}^{-1}$
be the sum of the morphisms $\rho_{2}$ over all injections $q_{i,j}$.
When we have an injection of type $q_{i,j}$, the image of the morphism
$\rho_{2}$ defined by Deligne falls in $ $\[
P_{i,j}\omega_{(\mathcal{Z}_{n},\tilde{N})/(W_{n},triv)}^{q}/(P_{i-1,j}+P_{i,j-1})\subset P_{i+j}\omega_{(\mathcal{Z}_{n},\tilde{N})/(W_{n},triv)}^{q}/P_{i+j-1}.\]
For $k\geq1$, we have the direct sum decompositions \[
P_{k}\omega_{(\mathcal{Z}_{n},\tilde{N})/(W_{n},triv)}^{\cdot}/P_{k-1}=\bigoplus_{\substack{i+j=k\\
i,j\geq0}
}\mbox{Gr}_{i,j}\omega_{(\mathcal{Z}_{n},\tilde{N})/(W_{n},triv)}^{\cdot}\mbox{ and }\]
\[
(\tau_{k})_{*}\Omega_{\mathcal{D}_{n}^{(k)}/W_{n}}^{q-k}=\bigoplus_{\substack{i+j=k\\
i,j\geq0}
}(\tau_{i,j})_{*}\Omega_{\mathcal{D}_{n}^{(i,j)}/W_{n}}^{q-i-j}.\]
It is easy to check that the isomorphism $\rho$ matches up the $(i,j)$
terms in each decomposition. Putting this discussion together, we
get the following. 
\begin{lem}
For $i,j\geq1$, the map\[
\mbox{Res}^{-1}:(\tau_{i,j})_{*}\Omega_{\mathcal{D}_{n}^{(i,j)}/W_{n}}^{q-i-j}\to\mbox{Gr}_{i,j}\omega_{(\mathcal{Z}_{n},\tilde{N})/(W_{n},triv)}^{q}\]
is an isomorphism. 
\end{lem}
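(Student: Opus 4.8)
The plan is to deduce this isomorphism from Deligne's residue isomorphism for the \emph{full} normal crossings divisor, by checking that the latter is block-diagonal with respect to two evident direct sum decompositions: one on the source, indexed by how the intersected divisors distribute between the two families $\{D_{1,k}\}$ and $\{D_{2,l}\}$, and one on the target, coming from the bifiltration $P_{\bullet,\bullet}$.

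First I would record that the total filtration $P_k\omega^\cdot_{(\mathcal{Z}_n,\tilde N)/(W_n,triv)}=\sum_{i+j=k}P_{i,j}\omega^\cdot_{(\mathcal{Z}_n,\tilde N)/(W_n,triv)}$ is exactly the weight filtration of Mokrane (1.1.1 of \cite{Mo}), equivalently Deligne's weight filtration (section 3 of Chapter II of \cite{D}), attached to the reduced normal crossings divisor $D:=(\bigcup_k D_{1,k})\cup(\bigcup_l D_{2,l})$ in $\mathcal{Z}_n$. This is immediate from the fact that $\tilde N$ is the amalgamated sum of $\tilde N_1$ and $\tilde N_2$: a section of $\omega^q_{(\mathcal{Z}_n,\tilde N)/(W_n,triv)}$ has total logarithmic pole order $\le k$ along $D$ precisely when it is a sum of products of $\le i$ logarithmic poles from the first family, $\le j$ from the second, and a holomorphic form, with $i+j=k$. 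Hence Proposition 3.6 of Chapter II of \cite{D} applies and furnishes the residue isomorphism $\rho\colon(\tau_k)_*\Omega^\cdot_{\mathcal{D}_n^{(k)}/W_n}[-k]\stackrel{\sim}{\to}P_k\omega^\cdot/P_{k-1}$, assembled as the sum over injections $f$ of the maps $\rho_2$ of loc. cit. 3.5.2.

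Next I would match up the two decompositions under $\rho$. On the source, the partition $\mathcal{D}_n^{(k)}=\bigsqcup_{i+j=k}\mathcal{D}_n^{(i,j)}$ just records, for an intersection of $k$ of the divisors, how many come from each family; on the target, the claim $P_k\omega^\cdot/P_{k-1}=\bigoplus_{i+j=k}\mathrm{Gr}_{i,j}\omega^\cdot$ is checked \'etale-locally in the model $Z_{r,s,m}$, where $\omega^q_{(\tilde N)}$ is free on the monomials $\alpha\wedge\bigwedge_p\tfrac{dX_{k_p}}{X_{k_p}}\wedge\bigwedge_q\tfrac{dY_{l_q}}{Y_{l_q}}$ (with $\alpha$ holomorphic), $P_{i,j}$ being the span of those monomials using at most $i$ of the $\tfrac{dX}{X}$ and at most $j$ of the $\tfrac{dY}{Y}$; then $\mathrm{Gr}_{i,j}$ is freely spanned by the monomials using exactly $i$ and exactly $j$, the direct sum decomposition of $P_k/P_{k-1}$ is transparent, and $\mathrm{Res}$ is, up to sign, the projection onto these monomials followed by restriction of $\alpha$ to the corresponding intersection of divisors. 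Finally, inspecting the formula in \cite{D} 3.5.2, the component $\rho_2$ attached to an injection $f$ of ``type $(i,j)$'' multiplies a form on $\mathcal{D}_n^f$ by the corresponding product of the $\tfrac{dX_k}{X_k}$'s and $\tfrac{dY_l}{Y_l}$'s, hence lands in $P_{i,j}\omega^\cdot$ and generates $\mathrm{Gr}_{i,j}$ modulo $P_{i-1,j}+P_{i,j-1}$; so the restriction of $\rho$ to the $(i,j)$-summand of its source is precisely $\mathrm{Res}^{-1}$. Thus $\rho=\bigoplus_{i+j=k}\mathrm{Res}^{-1}$ under the two decompositions, and each $\mathrm{Res}^{-1}$, being a direct summand of an isomorphism (with $\mathrm{Res}$ a two-sided inverse by the same local formulas), is an isomorphism; taking $k=i+j$ gives the lemma for all $i,j\ge1$.

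The one point requiring genuine care — and the step I expect to be the main obstacle — is the local bookkeeping that $P_k/P_{k-1}$ really is the direct sum of the $\mathrm{Gr}_{i,j}$, together with the compatibility of this splitting with Deligne's $\rho$; once one descends to $Z_{r,s,m}$ with its monomial basis this becomes routine, and independence of the admissible lifting is handled as in Lemma \ref{lem:independence} (or reduces to the independence already established there for $\tilde{\tilde{C}}_Y^\cdot$).
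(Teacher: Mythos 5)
Your proposal is correct and follows essentially the same route as the paper: invoke Deligne's residue isomorphism $\rho$ for the total weight filtration $P_{k}=\sum_{i+j=k}P_{i,j}$ (Proposition 3.6 of Chapter II of \cite{D}), observe the direct sum decompositions of $(\tau_{k})_{*}\Omega_{\mathcal{D}_{n}^{(k)}/W_{n}}^{q-k}$ by type $(i,j)$ and of $P_{k}/P_{k-1}$ into the $\mbox{Gr}_{i,j}$, and check (locally on the model $Z_{r,s,m}$) that $\rho$ is block-diagonal for these, so each block $\mbox{Res}^{-1}$ is an isomorphism. The only difference is that you spell out the local monomial-basis verification that the paper leaves as ``easy to check.''
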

We also have the following analogue of Lemma 1.2 of \cite{Mo}.
\begin{lem}
We have an exact sequence of complexes \[
0\to P_{i-1,j-1}\omega_{(\mathcal{Z}_{n},\tilde{N})/(W_{n},triv)}^{\cdot}\to P_{i-1,j}\omega_{(\mathcal{Z}_{n},\tilde{N})/(W_{n},triv)}^{\cdot}\oplus P_{i,j-1}\omega_{(\mathcal{Z}_{n},\tilde{N})/(W_{n},triv)}^{\cdot}\to\]
\[
\to P_{i,j}\omega_{(\mathcal{Z}_{n},\tilde{N})/(W_{n},triv)}^{\cdot}\to\mbox{Gr}_{i,j}\omega_{(\mathcal{Z}_{n},\tilde{N})/(W_{n},triv)}^{\cdot}\to0.\]
The long exact cohomology sequence(s) associated to this have all
coboundaries $0$, so we get the exact sequence:\[
0\to\mathcal{H}^{q}(P_{i-1,j-1}\omega_{(\mathcal{Z}_{n},\tilde{N})/(W_{n},triv)}^{\cdot})\to\mathcal{H}^{q}(P_{i-1,j}\omega_{(\mathcal{Z}_{n},\tilde{N})/(W_{n},triv)}^{\cdot})\oplus\mathcal{H}^{q}(P_{i,j-1}\omega_{(\mathcal{Z}_{n},\tilde{N})/(W_{n},triv)}^{\cdot})\to\]
\[
\to\mathcal{H}^{q}(P_{i,j}\omega_{(\mathcal{Z}_{n},\tilde{N})/(W_{n},triv)}^{\cdot})\to\mathcal{H}^{q}(\Omega_{\mathcal{D}_{n}^{(i,j)}/W_{n}}^{\cdot}[-i-j])\to0.\]
\end{lem}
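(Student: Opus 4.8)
The plan is to obtain the four\mbox{-}term exact sequence of complexes by a purely formal argument from the shape of the bifiltration, and then to kill the coboundaries using Cartier isomorphisms, in the spirit of the proof of Lemma~\ref{lem:Cartier} and of Lemma~1.2 of \cite{Mo}. Write $\omega^{\cdot}:=\omega_{(\mathcal{Z}_{n},\tilde{N})/(W_{n},triv)}^{\cdot}$ for brevity. Since the statement is etale\mbox{-}local on $Y$, I would fix an admissible lifting $(Z,\tilde{N})$ with $Z=Z_{1}\times_{W}Z_{2}\times_{W}Z_{3}$ as in the proof of Lemma~\ref{lem:Cartier}, so that $\omega^{\cdot}$ is identified with $\Omega_{\mathcal{Z}_{1,n}}^{\cdot}(\log Y^{1})\otimes\Omega_{\mathcal{Z}_{2,n}}^{\cdot}(\log Y^{2})\otimes\Omega_{\mathcal{Z}_{3,n}}^{\cdot}$. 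In this picture a local section of $P_{i,j}\omega^{q}$ is precisely a $q$\mbox{-}form with at most $i$ logarithmic poles along the components of $Y^{1}$ and at most $j$ along the components of $Y^{2}$ (the usual description of the Mokrane weight filtration applied separately in the two factors), so the two pole\mbox{-}counting filtrations are transverse and
\[
P_{i-1,j}\omega^{\cdot}\cap P_{i,j-1}\omega^{\cdot}=P_{i-1,j-1}\omega^{\cdot}.
\]
Granting this, the four\mbox{-}term sequence is formal: the first map $x\mapsto(x,-x)$ (the pair of inclusions, up to sign) is injective, exactness at $P_{i-1,j}\omega^{\cdot}\oplus P_{i,j-1}\omega^{\cdot}$ is exactly the intersection identity, and exactness at $P_{i,j}\omega^{\cdot}$ together with surjectivity of $P_{i,j}\omega^{\cdot}\to\mathrm{Gr}_{i,j}\omega^{\cdot}$ holds by the definition $\mathrm{Gr}_{i,j}\omega^{\cdot}=P_{i,j}\omega^{\cdot}/(P_{i-1,j}\omega^{\cdot}+P_{i,j-1}\omega^{\cdot})$. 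As in Lemma~\ref{lem:independence}, the $P_{i,j}\omega^{\cdot}$ are independent of the admissible lifting, so this globalizes over $Y$.

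To get the exact sequence on cohomology sheaves I would split the four\mbox{-}term sequence into the two short exact sequences of complexes
\[
0\to P_{i-1,j-1}\omega^{\cdot}\to P_{i-1,j}\omega^{\cdot}\oplus P_{i,j-1}\omega^{\cdot}\to(P_{i-1,j}\omega^{\cdot}+P_{i,j-1}\omega^{\cdot})\to0,
\]
\[
0\to(P_{i-1,j}\omega^{\cdot}+P_{i,j-1}\omega^{\cdot})\to P_{i,j}\omega^{\cdot}\to\mathrm{Gr}_{i,j}\omega^{\cdot}\to0,
\]
and then show that all coboundaries in the two associated long exact cohomology sequences vanish; for this it suffices that, for every $q$, the map $\mathcal{H}^{q}(P_{i-1,j-1}\omega^{\cdot})\to\mathcal{H}^{q}(P_{i-1,j}\omega^{\cdot})\oplus\mathcal{H}^{q}(P_{i,j-1}\omega^{\cdot})$ be injective and the map $\mathcal{H}^{q}(P_{i,j}\omega^{\cdot})\to\mathcal{H}^{q}(\mathrm{Gr}_{i,j}\omega^{\cdot})$ be surjective. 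Both would follow from Cartier isomorphisms exactly as in the proof of Lemma~\ref{lem:Cartier}: for $n=1$ each complex in sight is locally a tensor product of complexes of the shape $\Omega_{Z_{i}/k}^{\cdot}(\pm\log Y^{i})$ with $\Omega_{Z_{3}/k}^{\cdot}$, hence satisfies a bifiltered Cartier isomorphism by 4.2.1.1 and 4.2.1.3 of \cite{DI}, compatibly with the inclusions of the $P_{i,j}\omega^{\cdot}$ and with $\mathrm{Res}$; the injectivity then holds because Cartier exhibits $\mathcal{H}^{q}(P_{i-1,j-1}\omega^{\cdot})$ as a direct summand, and the surjectivity because $\mathrm{Res}\colon P_{i,j}\omega^{q}\to\mathrm{Gr}_{i,j}\omega^{q}$ is already surjective on forms while $\mathrm{Gr}_{i,j}\omega^{\cdot}\simeq(\tau_{i,j})_{*}\Omega_{\mathcal{D}_{n}^{(i,j)}/W_{n}}^{\cdot}[-i-j]$ carries its own Cartier isomorphism. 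The case of general $n$ then reduces to $n=1$ by the same reduction modulo $p$ used in the proof of Lemma~\ref{tensor with R_n}, since all complexes involved are reductions modulo $p^{n}$ of locally free complexes over $W$. Splicing the two long exact cohomology sequences and using $\mathrm{Res}^{-1}$ to identify $\mathcal{H}^{q}(\mathrm{Gr}_{i,j}\omega^{\cdot})$ with $\mathcal{H}^{q}(\Omega_{\mathcal{D}_{n}^{(i,j)}/W_{n}}^{\cdot}[-i-j])$ yields the asserted four\mbox{-}term exact sequence.

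The step I expect to be the main obstacle is the compatibility of the Cartier isomorphism with the \emph{double} filtration $P_{i,j}$, i.e.\ the simultaneous strictness of the Frobenius for the two pole\mbox{-}order filtrations; this is exactly what makes the diagram chase at level $n=1$ go through, and it rests on the transversality of the two filtrations, which in turn comes from the product structure of the admissible lifting (and on the bifiltered form of 4.2.1.3 of \cite{DI}). It is precisely this product structure---available thanks to the explicit shape of the completed local rings of $X_{U_{\mathrm{Iw}}}$ recorded in Prop.~2.8 of \cite{C}---that here plays the role of Mokrane's single semistable weight filtration.
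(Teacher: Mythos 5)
Your four-term exact sequence of complexes is fine and matches what the paper dismisses as the ``clear'' first assertion: exactness in the middle is the transversality $P_{i-1,j}\cap P_{i,j-1}=P_{i-1,j-1}$, which indeed follows from the local product structure of the admissible lifting. The divergence --- and the problem --- is in how you kill the coboundaries. The paper does this by an explicit cocycle computation valid for every $n$ at once (following Lemma 1.1.2 of \cite{Mo}): given a closed form $\alpha$ on a component of $\mathcal{D}_{n}^{(i,j)}$, the retraction $\rho$ of $\mathcal{Z}_{n}$ onto that component produces the closed lift $\rho^{*}\alpha\wedge\frac{dX_{k_{1}}}{X_{k_{1}}}\wedge\dots\wedge\frac{dY_{l_{j}}}{Y_{l_{j}}}$ in $P_{i,j}\omega^{q}$; this gives surjectivity on cocycles $ZP_{i,j}\omega^{q}\twoheadrightarrow Z\Omega_{\mathcal{D}_{n}^{(i,j)}/W_{n}}^{q-i-j}$, and the other needed surjectivity, $ZP_{i-1,j}\oplus ZP_{i,j-1}\twoheadrightarrow Z(P_{i-1,j}+P_{i,j-1})$, is then extracted by a descending induction using the injectivity just obtained. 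Your replacement of this by Cartier isomorphisms has two genuine gaps.

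First, the ``bifiltered Cartier isomorphism'' for the pieces $P_{i,j}\omega^{\cdot}$ is asserted, not proved: 4.2.1.1 and 4.2.1.3 of \cite{DI} give Cartier isomorphisms for $\Omega^{\cdot}(\log D)$ and $\Omega^{\cdot}(-\log D)$, not for the intermediate weight-filtration steps, and the standard route to a filtered Cartier isomorphism goes precisely through the residue exact sequences whose cohomological exactness is the content of this lemma --- so as written the argument is circular (it could be repaired by a direct local K\"unneth computation, but that computation is the missing work). Relatedly, ``$\mathrm{Res}$ is already surjective on forms'' does not yield surjectivity of $\mathcal{H}^{q}(P_{i,j}\omega^{\cdot})\to\mathcal{H}^{q}(\mathrm{Gr}_{i,j}\omega^{\cdot})$; what is needed is surjectivity on \emph{cocycles}, which is exactly the explicit closed lift above. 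Second, Cartier isomorphisms are a mod-$p$ phenomenon, so your argument at best treats $n=1$, and the passage to general $n$ is not supplied: Lemma \ref{tensor with R_n} concerns the de Rham-Witt sheaves $\mathcal{H}^{q}(\Omega_{\mathcal{Z}_{n}}^{\cdot})$ as modules over the Raynaud ring, not the vanishing of coboundaries for the complexes $\Omega_{\mathcal{Z}_{n}}^{\cdot}$ themselves, and vanishing of a coboundary mod $p$ does not formally propagate to mod $p^{n}$ (the functoriality of $\delta$ under reduction only shows $\delta_{n}$ lands in the kernel of reduction). The paper's retraction construction sidesteps both issues because it is an identity of forms over $W_{n}$ for arbitrary $n$, with no Frobenius in sight.
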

\begin{proof}
The first assertion is clear. In order to show that the second sequence
is exact, it suffices to show the following two statements about cocycles:
\begin{enumerate}
\item $ZP_{i,j}\omega_{(\mathcal{Z}_{n},\tilde{N})/(W_{n},triv)}^{q}\twoheadrightarrow Z\Omega_{\mathcal{D}_{n}^{(i,j)}/W_{n}}^{q-i-j}.$
\item $ZP_{i-1,j}\omega_{(\mathcal{Z}_{n},\tilde{N})/(W_{n},triv)}^{q}\oplus ZP_{j,i-1}\omega_{(\mathcal{Z}_{n},\tilde{N})/(W_{n},triv)}^{q}\twoheadrightarrow Z(P_{i-1,j}\omega_{(\mathcal{Z}_{n},\tilde{N})/(W_{n},triv)}^{q}+P_{i,j-1}\omega_{(\mathcal{Z}_{n},\tilde{N})/(W_{n},triv)}^{q}).$
\end{enumerate}
The first statement is proved in the same way as the main step in
Lemma 1.1.2 of \cite{Mo}. If $\alpha$ is a local section of $Z\Omega_{\mathcal{D}_{n}^{(i,j)}/W_{n}}^{q-i-j},$
assume that $\alpha$ is supported on some \[
D_{1,k_{1}}\times_{Z}\dots\times_{Z}D_{1,k_{i}}\times_{Z}D_{2,l_{1}}\times_{Z}\dots\times_{Z}D_{2,l_{j}},\]
for some $k_{1},\dots,k_{i},l_{1},\dots,l_{j}\in\{1,\dots,n\}$. Let
\[
\rho:\mathcal{Z}_{n}\to D_{1,k_{1}}\times_{Z}\dots\times_{Z}D_{1,k_{i}}\times_{Z}D_{2,l_{1}}\times_{Z}\dots\times_{Z}D_{2,l_{j}}\]
be the retraction associated to the immersion \[
D_{1,k_{1}}\times_{Z}\dots\times_{Z}D_{1,k_{i}}\times_{Z}D_{2,l_{1}}\times_{Z}\dots\times_{Z}D_{2,l_{j}}\to\mathcal{Z}_{n}.\]
Then $\rho^{*}\alpha$ lifts $\alpha$ to a section of $Z\Omega_{\mathcal{Z}_{n}/W_{n}}^{q-i-j}$
and the section $\omega_{\alpha}=\rho^{*}\alpha\wedge\frac{dX_{k_{1}}}{X_{k_{1}}}\wedge\dots\wedge\frac{dX_{k_{i}}}{X_{k_{i}}}\wedge\frac{dY_{l_{1}}}{Y_{l_{1}}}\wedge\dots\wedge\frac{dY_{l_{j}}}{Y_{l_{j}}}\in P_{i,j}\omega_{(\mathcal{Z}_{n},\tilde{N})/(W_{n},triv)}^{q}$
satisfies $d\omega=0$ and $\mbox{Res}(\omega)=\alpha$. From this,
we know that the coboundaries of the long exact sequence associated
to \[
0\to P_{i-1,j}\omega_{(\mathcal{Z}_{n},\tilde{N})/(W_{n},triv)}^{\cdot}+P_{i,j-1}\omega_{(\mathcal{Z}_{n},\tilde{N})/(W_{n},triv)}^{\cdot}\to P_{i,j}\omega_{(\mathcal{Z}_{n},\tilde{N})/(W_{n},triv)}^{\cdot}\to\mbox{Gr}_{i,j}\omega_{(\mathcal{Z}_{n},\tilde{N})/(W_{n},triv)}^{\cdot}\to0\]
are $0$, so we also know that \[
\mathcal{H}^{q}(P_{i-1,j}\omega_{(\mathcal{Z}_{n},\tilde{N})/(W_{n},triv)}^{\cdot}+P_{i,j-1}\omega_{(\mathcal{Z}_{n},\tilde{N})/(W_{n},triv)}^{\cdot})\hookrightarrow\mathcal{H}^{q}(P_{i,j}\omega_{(\mathcal{Z}_{n},\tilde{N})/(W_{n},triv)}^{\cdot})\]
 for every $i,j\geq1$. 

For the second statement, we have to prove that if $\alpha\in P_{i-1,j}\omega_{(\mathcal{Z}_{n},\tilde{N})/(W_{n},triv)}^{q}$
and $\beta\in P_{i,j-1}\omega_{(\mathcal{Z}_{n},\tilde{N})/(W_{n},triv)}^{q}$
satisfy $d(\alpha+\beta)=0$ then we can find $\alpha'\in ZP_{i-1,j}\omega_{(\mathcal{Z}_{n},\tilde{N})/(W_{n},triv)}^{q}$
and $\beta'\in ZP_{i,j-1}\omega_{(\mathcal{Z}_{n},\tilde{N})/(W_{n},triv)}^{q}$
such that $\alpha'+\beta'=\alpha+\beta$. If $\alpha\in P_{i-1,j-1}\omega_{(\mathcal{Z}_{n},\tilde{N})/(W_{n},triv)}^{q}$
then we are done, since we can just take $\alpha'=0,\beta'=\alpha+\beta$.
The same holds for $\beta$. Otherwise, we have $d\alpha\in P_{i-1,j-1}$
so by the injectivity proved in statement 1 for $(i-1,j)$, we know
that $d\alpha=d\alpha_{1}+d\alpha_{2}$ for some $\alpha_{1}\in P_{i-1,j-1}$
and $\alpha_{2}\in P_{i-2,j}$. Thus, we've reduced our problem from
$(i-1,j)$ to $(i-2,j)$. Proceeding by induction, we may assume that
$i=0$. In that case $d\alpha_{2i}\in P_{0,j-1}$. By (the same argument
as in the proof of) Lemma 1.1.2 of \cite{Mo}, we have an injection
\[
\mathcal{H}^{q}(P_{0,j-1}\omega_{(\mathcal{Z}_{n},\tilde{N})/(W_{n},triv)}^{\cdot})\hookrightarrow\mathcal{H}^{q}(P_{0,j}\omega_{(\mathcal{Z}_{n},\tilde{N})/(W_{n},triv)}^{\cdot}),\]
so that implies $d\alpha_{2i}=d\alpha_{2i+1}$ for some $\alpha_{2i+1}\in P_{0,j-1}$.
Then \[
\alpha':=\alpha-(\sum_{i'=0}^{i}\alpha_{2i'+1})\in ZP_{i-1,j},\beta':=\beta+\sum_{i'=0}^{i}\alpha_{2i'+1}\in ZP_{i,j-1}\]
satisfy the desired relations. 
\end{proof}
The double filtration $P_{i,j}$ on $\omega_{(\mathcal{Z}_{n},\tilde{N})/(W_{n},triv)}^{\cdot}$
induces a double filtration $P_{i,j}$ on $\tilde{\tilde{C}}_{\mathcal{Z}_{n}}$
and for $i,j\geq1$ the residue morphism $\mathrm{Res}:P_{i,j}\omega_{(\mathcal{Z}_{n},\tilde{N})/(W_{n},triv)}^{q}\to\Omega_{\mathcal{D}_{n}^{(i,j)}/W_{n}}^{q-i-j}$
factors through $P_{i,j}\tilde{\tilde{C}}_{\mathcal{Z}_{n}}$.
\begin{lem}
For any two admissible liftings $(Z_{1},\tilde{N})$ and $(Z_{2},\tilde{N})$
of $(Y,\tilde{M})$ we have a canonical isomorphism\[
\alpha_{Z_{1}Z_{2}}:\mathcal{H}^{q}(P_{i,j}\tilde{\tilde{C}}_{\mathcal{Z}_{1,n}}^{\cdot})\to\mathcal{H}^{q}(P_{i,j}\tilde{\tilde{C}}_{\mathcal{Z}_{2,n}}^{\cdot})\]
satisfying the cocycle condition for any three admissible liftings.

Moreover, the residue morphism $\mbox{Res}_{Z}:\mathcal{H}^{q}(P_{i,j}\tilde{\tilde{C}}_{\mathcal{Z}_{n}}^{\cdot})\to\mathcal{H}^{q-i-j}(\Omega_{\mathcal{D}_{n}^{(i,j)}/W_{n}}^{\cdot})\simeq W_{n}\Omega_{Y^{(i,j)}}^{q}$
induced on cohomology satisfies the compatibility \[
\mbox{Res}_{Z_{1}}=\mbox{Res}_{Z_{2}}\circ\alpha_{Z_{1}Z_{2}}.\]
\end{lem}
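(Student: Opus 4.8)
The plan is to imitate the proof of Lemma~\ref{lem:independence}, carrying the double filtration $P_{i,j}$ along at every step. All the complexes involved commute with étale base change, so I would work étale-locally on $Y$ and assume that $i_k\colon(Y,\tilde M)\to(Z_k,\tilde N_k)$ is an exact closed immersion for $k=1,2$. As in Lemma~\ref{lem:independence}, I would form the diagonal immersion of $(Y,\tilde M)$ into the fibre product $(Z_1\times_{W_n}Z_2,\tilde N_{1\times 2})$ over $(W_n,triv)$ and, using Lemma~4.10 of~\cite{K}, factor it as $(Y,\tilde M)\xrightarrow{f}(Z_{12},\tilde N_{12})\xrightarrow{g}(Z_1\times_{W_n}Z_2,\tilde N_{1\times 2})$ with $g$ log étale and $f$ an exact closed immersion. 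The crucial point is that, because $(Z_1,\tilde N_1)$ and $(Z_2,\tilde N_2)$ are \emph{admissible}, this factorization can be chosen so that the divisorial part of the log structure of $Z_{12}$ is pulled back from $Z_1$ along the first projection $p_1$: concretely, so that $Z_{12}$ is étale over $Z_{r,s,m}$ after adjoining $e$ auxiliary variables $t_1,\dots,t_e$ carrying no log structure, the divisors $D_{1,k},D_{2,k}$ on $Z_{12}$ are the pullbacks of those on $Z_1$, the sub-log-structures $\tilde N_{12,1},\tilde N_{12,2}$ defining the filtration $P_{i,j}$ equal $p_1^*\tilde N_{1,1}$ and $p_1^*\tilde N_{1,2}$, and (after shrinking) $Z_{12}\simeq Z_1\otimes_{W_n}W_n[t_1,\dots,t_e]$ with $Y$ inside the closed subscheme $\{t_1=\dots=t_e=0\}$, none of whose defining variables meet any $D_{m,k}$.

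Granting this, I would invoke the Künneth formula for logarithmic differentials to get, compatibly with each of the three complexes whose images cut out $P_{i,j}$, an isomorphism
\[
P_{i,j}\,\omega^{\cdot}_{(\mathcal{Z}_{12,n},\tilde N_{12})/(W_n,triv)}\;\simeq\;P_{i,j}\,\omega^{\cdot}_{(\mathcal{Z}_{1,n},\tilde N_1)/(W_n,triv)}\otimes_{W_n}\Omega^{\cdot}_{W_n[t_1,\dots,t_e]/W_n};
\]
passing to $PD$-envelopes (so that $\mathcal{O}_{D_{12}}\simeq\mathcal{O}_{D_1}\langle t_1,\dots,t_e\rangle$, as in Proposition~6.5 of~\cite{K}) and tensoring down over $W_n\langle\tau,\sigma\rangle$, this becomes
\[
P_{i,j}\,\tilde{\tilde{C}}^{\cdot}_{\mathcal{Z}_{12,n}}\;\simeq\;\bigl(P_{i,j}\,\tilde{\tilde{C}}^{\cdot}_{\mathcal{Z}_{1,n}}\bigr)\otimes_{W_n}\bigl(\Omega^{\cdot}_{W_n[t_1,\dots,t_e]/W_n}\otimes_{W_n[t_1,\dots,t_e]}W_n\langle t_1,\dots,t_e\rangle\bigr)
\]
The second tensor factor is the Poincaré-lemma resolution of $W_n$ concentrated in degree $0$, so the structural map $P_{i,j}\tilde{\tilde{C}}^{\cdot}_{\mathcal{Z}_{1,n}}\to P_{i,j}\tilde{\tilde{C}}^{\cdot}_{\mathcal{Z}_{12,n}}$ is a quasi-isomorphism, and the same holds with $Z_1$ replaced by $Z_2$. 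I would then \emph{define} $\alpha_{Z_1Z_2}$ on $\mathcal{H}^q$ to be the composite of the first quasi-isomorphism with the inverse of the second, checking its independence of the auxiliary $(Z_{12},\tilde N_{12})$ by a further factorization exactly as in Lemma~\ref{lem:independence}.

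For the cocycle condition I would proceed as in Lemma~\ref{lem:independence}: given a third admissible lifting $(Z_3,\tilde N_3)$, run the same construction over $Z_1\times_{W_n}Z_2\times_{W_n}Z_3$ and over the products $Z_i\times_{W_n}Z_j$, note that all the induced maps of $P_{i,j}$-pieces are quasi-isomorphisms, and assemble the resulting commutative diagram of complexes. For the residue compatibility, the Künneth decomposition identifies $\mathrm{Res}_{Z_{12}}$ with $\mathrm{Res}_{Z_1}\otimes\mathrm{id}$, since $\mathcal{D}^{(i,j)}_{n,12}\simeq\mathcal{D}^{(i,j)}_{n,1}\otimes_{W_n}W_n[t_1,\dots,t_e]$ and $\mathrm{Res}$ merely restricts along the $D_{m,k}$, which do not involve the $t$'s; and since each $\mathcal{D}^{(i,j)}_{n,k}$ is a smooth $W_n$-lifting of the intrinsic closed subscheme $Y^{(i,j)}\subset Y$, the de Rham--Witt identification $\mathcal{H}^{q-i-j}(\Omega^{\cdot}_{\mathcal{D}^{(i,j)}_{n,k}/W_n})\simeq W_n\Omega^{\cdot}_{Y^{(i,j)}}$ is canonical and compatible with $\mathcal{D}^{(i,j)}_{n,12}$. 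Hence on cohomology $\mathrm{Res}_{Z_1}$ factors through $\mathrm{Res}_{Z_{12}}$ via the above quasi-isomorphism, and likewise for $Z_2$, so combining the two equalities yields $\mathrm{Res}_{Z_1}=\mathrm{Res}_{Z_2}\circ\alpha_{Z_1Z_2}$.

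I expect the main obstacle to be exactly the first step: arranging $(Z_{12},\tilde N_{12})$ so that its divisorial structure --- and hence the sub-log-structures defining $P_{i,j}$ --- is pulled back from $Z_1$ along $p_1$, which is what makes $P_{i,j}$ decompose as a Künneth product with a log-trivial polynomial ring. This is where admissibility of both liftings is indispensable: it lets one work over the explicit model $Z_{r,s,m}$ and keep the labelled divisors $Y_{1,k},Y_{2,k}$ under control. Once this is in place, the rest reduces to the classical Poincaré-lemma bookkeeping of~\cite{HK} adapted as in Lemma~\ref{lem:independence}.
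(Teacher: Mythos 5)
Your construction of $\alpha_{Z_{1}Z_{2}}$ is essentially the paper's: the paper likewise reduces to the argument of Lemma \ref{lem:independence}, forming $(Z_{12},\tilde{N}_{12})$ log \'etale over $Z_{1}\times Z_{2}$ and observing that $Z_{12}$ is \'etale over the explicit model $\mathrm{Spec}\,W[X_{i},Y_{j},X'_{i},Y'_{j},v_{i}^{\pm1},u_{j}^{\pm1}]/(X_{i}v_{i}-X'_{i},Y_{j}u_{j}-Y'_{j})$, so that the divisorial structure and the sub-log-structures $\tilde{N}_{1},\tilde{N}_{2}$ defining $P_{i,j}$ are visibly pulled back from either factor and the Poincar\'e-lemma/PD-polynomial argument goes through filtration by filtration; your ``crucial point'' is exactly what this model delivers. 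Where you genuinely diverge is the residue compatibility. The paper follows Mokrane's Lemma 3.4(2): it takes sections $\omega=\alpha\wedge\frac{dX_{k_{1}}}{X_{k_{1}}}\wedge\cdots$ on $Z_{1}$ and $\omega'=\alpha'\wedge\frac{dX'_{k_{1}}}{X'_{k_{1}}}\wedge\cdots$ on $Z_{2}$ agreeing on $Z_{12}$, expands $\omega-\omega'=(\alpha-\alpha')\wedge\frac{dX_{k_{1}}}{X_{k_{1}}}\wedge\cdots+\Psi$ with $\Psi\in P_{i-1,j}+P_{i,j-1}$ (because $d\log X'_{k}=d\log X_{k}+d\log v_{k}$ and $d\log v_{k}$ is a regular form), and concludes $(\alpha-\alpha')|_{\mathcal{D}_{12,n}^{(i,j)}}=0$. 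You instead appeal to the canonicality of Deligne's residue and the K\"unneth decomposition $\mathcal{D}_{n,12}^{(i,j)}\simeq\mathcal{D}_{n,1}^{(i,j)}\otimes_{W_{n}}W_{n}[t_{1},\dots,t_{e}]$. This works, but note that your ``likewise for $Z_{2}$'' step is precisely where the content lies: the $Z_{2}$-coordinates $X'_{k}$ differ from the $Z_{1}$-coordinates $X_{k}$ by the units $v_{k}$, and the fact that $\mathrm{Res}_{Z_{12}}$ computed with either set of coordinates agrees modulo lower filtration is exactly the $\Psi\in P_{i-1,j}+P_{i,j-1}$ computation (equivalently, the coordinate-independence of $\rho$ in Prop.~3.6, Ch.~II of \cite{D}). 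So your argument is correct, but it packages into ``$\mathrm{Res}$ is canonical'' the one explicit verification the paper chooses to spell out; the paper's route has the advantage of making visible why the units $v_{k},u_{l}$ are harmless, while yours is cleaner once that fact is granted.
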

\begin{proof}
The proof of the first part is basically the same as the proof of
Lemma \ref{lem:independence}. We take admissible lifts $(Z_{1},\tilde{N})$
and $(Z_{2},\tilde{N})$ (we denote the log structures on both simply
by $\tilde{N}$, as it will be understood from the context which is
the underlying scheme). As in the proof of Lemma \ref{lem:independence},
we form $(Z_{12},\tilde{N})$, which is smooth over $(Z_{i},\tilde{N})$,
even though it is not quite an admissible lift. However, $Z_{12}$
is etale over \[
\mbox{Spec }W[X_{1},\dots,X_{n},Y_{1},\dots,Y_{n},X'_{1},\dots X'_{n},Y'_{1},\dots,Y'_{n},v_{1}^{\pm1},\dots,v_{r}^{\pm1},u_{1}^{\pm1},\dots u_{s}^{\pm1}]/(X_{i}v_{i}-X'_{i}.Y_{j}v_{j}-Y'_{j}).\]
So we can endow $\tilde{\tilde{C}}_{\mathcal{Z}_{12,n}}^{\cdot}$
with a filtration $P_{i,j}\tilde{\tilde{C}}_{\mathcal{Z}_{12},n}^{\cdot}$
defined as above, in terms of log structures $\tilde{N}_{1}$ and
$\tilde{N}_{2}$ (which come from formally {}``inverting'' the $X_{i}$
and $X'_{i}$ or the $Y_{i}$ and $Y_{i}'$). Then the same argument
used in the proof of Lemma \ref{lem:independence} gives us quasi-isomorphisms\[
P_{i,j}\tilde{\tilde{C}}_{\mathcal{Z}_{i,n}}\to P_{i,j}\tilde{\tilde{C}}_{\mathcal{Z}_{12,n}}\]
for $i=1,2$, which satisfy the right compatibility condition for
three admissible lifts.

For the second part, we follow the argument in Lemma 3.4 (2) of \cite{Mo}.
We let \[
\omega=\alpha\wedge\frac{dX_{k_{1}}}{X_{k_{1}}}\wedge\dots\wedge\frac{dX_{k_{i}}}{X_{k_{i}}}\wedge\frac{dY_{l_{1}}}{Y_{l_{1}}}\wedge\dots\wedge\frac{dY_{l_{i}}}{Y_{l_{i}}}\]
be a section of $P_{i,j}\omega_{(\mathcal{Z}_{1,n},\tilde{N})/(W_{n},triv)}^{q}$
and \[
\omega'=\alpha'\wedge\frac{dX'_{k_{1}}}{X'_{k_{1}}}\wedge\dots\wedge\frac{dX'_{k_{i}}}{X'_{k_{i}}}\wedge\frac{dY'_{l_{1}}}{Y'_{l_{1}}}\wedge\dots\wedge\frac{dY'_{l_{i}}}{Y'_{l_{i}}}\]
be a section of $P_{i,j}\omega_{(\mathcal{Z}_{2,n},\tilde{N})/(W_{n},triv)}^{q}$
such that $\omega=\omega'$ in $P_{i,j}\omega_{(\mathcal{Z}_{12,n},\tilde{N})/(W_{n},triv)}^{q}$.
We have to check that $\alpha|_{\mathcal{D}_{12,n}^{(i,j)}}=\alpha'|_{\mathcal{D}_{12,n}^{(i,j)}}$.
But \[
\omega-\omega'=(\alpha-\alpha')\wedge\frac{dX_{k_{1}}}{X_{k_{1}}}\wedge\dots\wedge\frac{dX_{k_{i}}}{X_{k_{i}}}\wedge\frac{dY_{l_{1}}}{Y_{l_{1}}}\wedge\dots\wedge\frac{dY_{l_{i}}}{Y_{l_{i}}}+\Psi,\]
where $\Psi\in P_{i,j-1}\omega_{(\mathcal{Z}_{2,n},\tilde{N})/(W_{n},triv)}^{q}+P_{i-1,j}\omega_{(\mathcal{Z}_{2,n},\tilde{N})/(W_{n},triv)}^{q}$.
This means that \[
(\alpha-\alpha')\wedge\frac{dX_{k_{1}}}{X_{k_{1}}}\wedge\dots\wedge\frac{dX_{k_{i}}}{X_{k_{i}}}\wedge\frac{dY_{l_{1}}}{Y_{l_{1}}}\wedge\dots\wedge\frac{dY_{l_{i}}}{Y_{l_{i}}}\]
is also a section of $P_{i,j-1}\omega_{(\mathcal{Z}_{2,n},\tilde{N})/(W_{n},triv)}^{q}+P_{i-1,j}\omega_{(\mathcal{Z}_{2,n},\tilde{N})/(W_{n},triv)}^{q}$,
so $(\alpha-\alpha')|_{\mathcal{D}_{12,n}^{(i,j)}}=0$. \end{proof}
\begin{cor}
\label{identifying graded pieces}We can define the sheaves \[
P_{i,j}W_{n}\tilde{\tilde{\omega}}_{Y}^{q}:=\mathcal{H}^{q}(P_{i,j}\tilde{\tilde{C}}_{Y}^{\cdot}).\]
The complexes $P_{i,j}W_{n}\tilde{\tilde{\omega}}_{Y}^{\cdot}$ form
an increasing double filtration of $W_{n}\tilde{\tilde{\omega}}_{Y}^{\cdot}$
such that the graded pieces for $i,j\geq1$ \[
\mbox{Gr}_{i,j}W_{n}\tilde{\tilde{\omega}}_{Y}^{\cdot}:=P_{i,j}W_{n}\tilde{\tilde{\omega}}_{Y}^{\cdot}/P_{i,j-1}+P_{i-1,j}\]
are canonically isomorphic to the de Rham Witt complexes of the smooth
subschemes $Y^{(i,j)}$:\[
\mbox{Res}:\mbox{Gr}_{i,j}W_{n}\tilde{\tilde{\omega}}_{Y}^{\cdot}\stackrel{\sim}{\to}W_{n}\Omega_{Y^{(i,j)}}^{\cdot}[-i-j](-i-j).\]
\end{cor}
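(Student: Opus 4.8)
The plan is to obtain the corollary by globalizing the local statements of the three preceding lemmas and then running a short induction. \emph{First}, I would note that locally on $Y_{et}$, for an admissible lifting $(Z,\tilde N)$, we have the subcomplexes $P_{i,j}\tilde{\tilde C}_{\mathcal Z_n}^{\cdot}\subseteq\tilde{\tilde C}_{\mathcal Z_n}^{\cdot}$ and hence cohomology sheaves $\mathcal H^{q}(P_{i,j}\tilde{\tilde C}_{\mathcal Z_n}^{\cdot})$; by the lemma immediately preceding the corollary these are independent of the admissible lifting up to canonical isomorphisms $\alpha_{Z_1Z_2}$ satisfying the cocycle condition, compatibly with the residue maps $\mathrm{Res}_Z$. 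So, exactly as $W_n\tilde{\tilde\omega}_Y^{q}$ was glued from the $\tilde{\tilde C}_{\mathcal Z_n}$, these local sheaves glue to a well-defined sheaf $P_{i,j}W_n\tilde{\tilde\omega}_Y^{q}$ on $Y_{et}$. The inclusions $P_{i,j}\tilde{\tilde C}^{\cdot}\hookrightarrow P_{i',j'}\tilde{\tilde C}^{\cdot}$ (for $i\leq i'$, $j\leq j'$) and the residue maps commute with the $\alpha_{Z_1Z_2}$, so they glue to global transition maps and to a global residue map $\mathrm{Res}\colon\mathrm{Gr}_{i,j}W_n\tilde{\tilde\omega}_Y^{q}\to W_n\Omega_{Y^{(i,j)}}^{q-i-j}$.

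\emph{Second}, I would upgrade the residue isomorphism $\mathrm{Res}^{-1}$ and the four-term exact sequence (the analogue of Lemma 1.2 of \cite{Mo}) from the level of $\omega_{(\mathcal Z_n,\tilde N)}^{\cdot}$ to the quotient complex $\tilde{\tilde C}_{\mathcal Z_n}^{\cdot}$. Their proofs are explicit cocycle computations built on the Cartier isomorphism of Lemma \ref{lem:Cartier}, and I expect them to carry over essentially verbatim — indeed this is why the residue morphism was already observed to factor through $P_{i,j}\tilde{\tilde C}_{\mathcal Z_n}$. This yields, for $i,j\geq1$, an exact sequence of complexes
\[
0\to P_{i-1,j-1}\tilde{\tilde C}_{\mathcal Z_n}^{\cdot}\to P_{i-1,j}\tilde{\tilde C}_{\mathcal Z_n}^{\cdot}\oplus P_{i,j-1}\tilde{\tilde C}_{\mathcal Z_n}^{\cdot}\to P_{i,j}\tilde{\tilde C}_{\mathcal Z_n}^{\cdot}\to\mathrm{Gr}_{i,j}\tilde{\tilde C}_{\mathcal Z_n}^{\cdot}\to 0
\]
with $\mathrm{Gr}_{i,j}\tilde{\tilde C}_{\mathcal Z_n}^{\cdot}\simeq(\tau_{i,j})_{*}\Omega_{\mathcal D_n^{(i,j)}/W_n}^{\cdot-i-j}$ via $\mathrm{Res}$ and with all coboundaries in the associated long exact cohomology sequences vanishing, hence a four-term exact sequence of cohomology sheaves whose last two terms are $\mathcal H^{q}(P_{i,j}\tilde{\tilde C}^{\cdot})\to\mathcal H^{q-i-j}(\Omega_{\mathcal D_n^{(i,j)}/W_n}^{\cdot})\to 0$; I would then identify $\mathcal H^{q-i-j}(\Omega_{\mathcal D_n^{(i,j)}/W_n}^{\cdot})$ with $W_n\Omega_{Y^{(i,j)}}^{q-i-j}$ as in the preceding lemma, using that $D^{(i,j)}$ is smooth over $W$.

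\emph{Third}, I would conclude as follows. From the exactness of this four-term cohomology sequence, the kernel of each unit transition step $\mathcal H^{q}(P_{i,j}\tilde{\tilde C}^{\cdot})\to\mathcal H^{q}(P_{i+1,j}\tilde{\tilde C}^{\cdot})$ is the image of $\ker\bigl(\mathcal H^{q}(P_{i,j-1}\tilde{\tilde C}^{\cdot})\to\mathcal H^{q}(P_{i+1,j-1}\tilde{\tilde C}^{\cdot})\bigr)$, so a double induction on $(i,j)$ — with base case $P_{\bullet,0}$, $P_{0,\bullet}$ the ordinary weight filtrations for $\tilde N_1$, $\tilde N_2$, whose transition maps are injective on cohomology by (the cocycle argument of) Lemma 1.1.2 of \cite{Mo} — shows every transition map $P_{i,j}W_n\tilde{\tilde\omega}_Y^{\cdot}\to P_{i',j'}W_n\tilde{\tilde\omega}_Y^{\cdot}$ is injective, so $\{P_{i,j}W_n\tilde{\tilde\omega}_Y^{\cdot}\}$ is a genuine increasing double filtration of $W_n\tilde{\tilde\omega}_Y^{\cdot}$. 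The same sequence identifies the cokernel of $P_{i-1,j}+P_{i,j-1}\to P_{i,j}$ on cohomology with $W_n\Omega_{Y^{(i,j)}}^{q-i-j}$, i.e. $\mathrm{Res}\colon\mathrm{Gr}_{i,j}W_n\tilde{\tilde\omega}_Y^{q}\xrightarrow{\ \sim\ }W_n\Omega_{Y^{(i,j)}}^{q-i-j}$: the shift $[-i-j]$ records that $\mathrm{Res}$ lowers form degree by $i+j$, and the Tate twist $(-i-j)$ records the Frobenius scaling — a lift of Frobenius sends each $\tfrac{dX_k}{X_k}$ to $p\,\tfrac{dX_k}{X_k}$ modulo exact forms, so Frobenius on $\mathrm{Gr}_{i,j}$ is $p^{i+j}$ times Frobenius on $W_n\Omega_{Y^{(i,j)}}^{\cdot}$. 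Finally, all these local identifications are compatible with the $\alpha_{Z_1Z_2}$ by the residue-compatibility of the preceding lemma, hence glue to the asserted global statement.

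The step I expect to be the main obstacle is the \emph{second} one: transferring the explicit cocycle arguments — the surjectivity statements on cocycles, the residue isomorphism, and the four-term exact sequence — from $\omega_{(\mathcal Z_n,\tilde N)}^{\cdot}$ to $\tilde{\tilde C}_{\mathcal Z_n}^{\cdot}$ while keeping everything compatible with the Cartier isomorphism, the differential and the filtration, together with the bookkeeping of the Frobenius twist that pins down the $(-i-j)$ in the statement.
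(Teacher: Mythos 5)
Your proposal is correct and follows essentially the same route as the paper: the corollary there is drawn directly from the two preceding lemmas, namely gluing the local sheaves $\mathcal{H}^{q}(P_{i,j}\tilde{\tilde{C}}_{\mathcal{Z}_{n}}^{\cdot})$ via the canonical cocycle-compatible isomorphisms $\alpha_{Z_{1}Z_{2}}$ together with the residue compatibility, and identifying the graded pieces through the four-term exact sequence (whose cocycle-level surjectivity statements also give the injectivity of the transition maps) and the residue isomorphism $\mathrm{Res}^{-1}$. Your extra care about transferring the filtration from $\omega_{(\mathcal{Z}_{n},\tilde{N})}^{\cdot}$ to the quotient $\tilde{\tilde{C}}_{\mathcal{Z}_{n}}^{\cdot}$ and about the Frobenius twist $(-i-j)$ matches what the paper records in the paragraph preceding the last lemma and in Lemma \ref{compatibility with frobenius}, respectively.
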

\begin{lem}
\label{compatibility with projections}The constructions in this sections
are compatible with the transition morphisms $\pi$, in the following
way. 
\begin{enumerate}
\item The following diagrams are commutative: \[
\xymatrix{W_{n+1}\tilde{\tilde{\omega}}_{Y}^{q}\ar[d]_{\wedge\frac{d\tau}{\tau}}\ar[r]^{\pi} & W_{n}\tilde{\tilde{\omega}}_{Y}^{q}\ar[d]^{\wedge\frac{d\tau}{\tau}}\\
W_{n+1}\tilde{\tilde{\omega}}_{Y}^{q}\ar[r]^{\pi} & W_{n}\tilde{\tilde{\omega}}_{Y}^{q}}
\]
and \[
\xymatrix{W_{n+1}\tilde{\tilde{\omega}}_{Y}^{q}\ar[d]_{\wedge\frac{d\sigma}{\sigma}}\ar[r]^{\pi} & W_{n}\tilde{\tilde{\omega}}_{Y}^{q}\ar[d]^{\wedge\frac{d\sigma}{\sigma}}\\
W_{n+1}\tilde{\tilde{\omega}}_{Y}^{q}\ar[r]^{\pi} & W_{n}\tilde{\tilde{\omega}}_{Y}^{q}}
.\]

\item The projection $\pi:W_{n+1}\tilde{\tilde{\omega}}_{Y}^{q}\to W_{n}\tilde{\tilde{\omega}}_{Y}^{q}$
preserves the weight filtration $P_{i,j}$ on $W_{m}\tilde{\tilde{\omega}}_{Y}^{q}$
for $m=n,n+1$.
\item The morphism $\pi:P_{i,j}W_{n+1}\tilde{\tilde{\omega}}_{Y}^{q}\to P_{i,j}W_{n}\tilde{\tilde{\omega}}_{Y}^{q}$
is surjective. 
\end{enumerate}
\end{lem}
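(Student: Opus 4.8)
The plan is to prove the three statements simultaneously, following the pattern of Lemma~3.4 and the surrounding discussion in \cite{Mo} (and the analogous statements for $W_n\omega_Y^\cdot$ in \cite{H,HK,Na}); the only genuinely new feature here is that there are two families of divisors $D_{1,i}$ and $D_{2,j}$, hence a bi-index filtration rather than a single one. Since all the sheaves and maps involved are, by Lemma~\ref{lem:independence} and Corollary~\ref{identifying graded pieces}, independent of the admissible lifting and compatible with \'etale localization, I would work \'etale-locally on $Y$ with the explicit admissible lifting $Z=Z_{r,s,m}$. The first step is to fix a convenient lift of Frobenius on $Z_{r,s,m}$, namely $Fr$ given by $X_i\mapsto X_i^p$, $Y_j\mapsto Y_j^p$, $Z_l\mapsto Z_l^p$, $\tau\mapsto\tau^p$, $\sigma\mapsto\sigma^p$. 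This is compatible with the relations $X_1\cdots X_r=\tau$, $Y_1\cdots Y_s=\sigma$, lifts the absolute Frobenius modulo $p$, satisfies $Fr^*(W[\tau,\sigma])\subset W[\tau,\sigma]$, and extends uniquely along any \'etale $Z\to Z_{r,s,m}$. It preserves each divisor $D_{1,i}$ and $D_{2,j}$, hence $Fr^*$ preserves the subcomplexes $\omega^\cdot_{(\mathcal Z_n,\tilde N_1)}$, $\omega^\cdot_{(\mathcal Z_n,\tilde N_2)}$ and the bi-filtration $P_{i,j}\omega^\cdot_{(\mathcal Z_n,\tilde N)}$; moreover $Fr^*\frac{d\tau}{\tau}=\frac{d(\tau^p)}{\tau^p}=p\frac{d\tau}{\tau}$, and likewise for $\sigma$. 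Recall that $\mathfrak p$ in degree $i$ is induced by $p^{-i+1}Fr^*$ and that $\pi$ is characterised by $\mathfrak p\circ\pi=p$, $\pi\circ\mathfrak p=p$, with $\mathfrak p$ injective.

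For part (1): a direct computation using $Fr^*(\alpha\wedge\frac{d\tau}{\tau})=Fr^*\alpha\wedge p\frac{d\tau}{\tau}$ and the degree-dependent normalisation of $\mathfrak p$ gives $\mathfrak p(\alpha\wedge\frac{d\tau}{\tau})=\mathfrak p(\alpha)\wedge\frac{d\tau}{\tau}$ for every local section $\alpha$ of $W_n\tilde{\tilde\omega}_Y^q$ (here $\frac{d\tau}{\tau}$ is the global, lifting-independent section of Lemma~\ref{global sections}), and similarly with $\sigma$. Hence for $\eta\in W_{n+1}\tilde{\tilde\omega}_Y^q$ one has
\[
\mathfrak p\bigl(\pi(\eta\wedge\tfrac{d\tau}{\tau})\bigr)=p\bigl(\eta\wedge\tfrac{d\tau}{\tau}\bigr)=\bigl(p\eta\bigr)\wedge\tfrac{d\tau}{\tau}=\mathfrak p\bigl(\pi(\eta)\bigr)\wedge\tfrac{d\tau}{\tau}=\mathfrak p\bigl(\pi(\eta)\wedge\tfrac{d\tau}{\tau}\bigr),
\]
and injectivity of $\mathfrak p$ yields $\pi(\eta\wedge\frac{d\tau}{\tau})=\pi(\eta)\wedge\frac{d\tau}{\tau}$; the argument for $\frac{d\sigma}{\sigma}$ is identical.

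For parts (2) and (3): since $Fr^*$ preserves $P_{i,j}\omega^\cdot_{(\mathcal Z_n,\tilde N)}$, the map $\mathfrak p$ preserves $P_{i,j}W_m\tilde{\tilde\omega}_Y^q$ for $m=n,n+1$, and together with $\operatorname{im}(\mathfrak p)=pW_{n+1}\tilde{\tilde\omega}_Y^q$ this gives $\mathfrak p\bigl(P_{i,j}W_n\tilde{\tilde\omega}_Y^q\bigr)\subseteq P_{i,j}W_{n+1}\tilde{\tilde\omega}_Y^q\cap pW_{n+1}\tilde{\tilde\omega}_Y^q$. The key point is the reverse inclusion, i.e.\ that $\mathfrak p$ \emph{reflects} the bi-filtration on its image: $\mathfrak p(\xi)\in P_{i,j}W_{n+1}\tilde{\tilde\omega}_Y^q$ forces $\xi\in P_{i,j}W_n\tilde{\tilde\omega}_Y^q$. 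I would prove this by induction on $i+j$, using the four-term exact sequence of the $P_{i,j}$'s established above and the residue isomorphism $\operatorname{Res}\colon\operatorname{Gr}_{i,j}W_n\tilde{\tilde\omega}_Y^\cdot\xrightarrow{\sim}W_n\Omega^\cdot_{Y^{(i,j)}}[-i-j](-i-j)$ of Corollary~\ref{identifying graded pieces}: from the explicit formula for $\operatorname{Res}$ and the fact that $Fr^*$ preserves the divisors, one checks that $\operatorname{Res}$ intertwines $\mathfrak p$ with the corresponding operator on the honest de Rham--Witt complex $W_n\Omega^\cdot_{Y^{(i,j)}}$ of the smooth scheme $Y^{(i,j)}$, where both ``$\mathfrak p$ reflects the (trivial) filtration'' and ``$\pi$ is surjective'' are classical (\cite{I,IR}); the boundary graded pieces $i=0$ or $j=0$ are treated the same way using the $(-\log)$-variants of the de Rham--Witt complex from Subsection~3.2. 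Once reflection is known, $\eta\in P_{i,j}W_{n+1}\tilde{\tilde\omega}_Y^q$ gives $\mathfrak p(\pi(\eta))=p\eta\in P_{i,j}W_{n+1}\tilde{\tilde\omega}_Y^q$, hence $\pi(\eta)\in P_{i,j}W_n\tilde{\tilde\omega}_Y^q$, which is (2); and (3) follows by applying the snake lemma to $\pi$ acting on the four-term exact sequence of the $P$'s, with the inductive hypothesis handling the subobjects and the classical surjectivity of $\pi$ on $W_\bullet\Omega^\cdot_{Y^{(i,j)}}$ handling the graded piece.

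The step I expect to be the main obstacle is precisely this reflection/compatibility of $\mathfrak p$ with $\operatorname{Res}$ in part (2): one must verify at the level of the admissible model that the normalisation $p^{-i+1}Fr^*$ interacts correctly with the residue along the $D_{1,k}$ and $D_{2,l}$ and with the weight shift $[-i-j](-i-j)$, and one must treat the mixed and boundary graded pieces uniformly with the $(-\log)$ complexes. Everything else is either a short explicit local computation (part (1)) or a formal consequence of the four-term exact sequences together with the corresponding known facts for de Rham--Witt complexes of smooth schemes.
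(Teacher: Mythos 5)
Your treatments of parts (1) and (3) are essentially the paper's: part (1) is the same Frobenius-lift computation (the paper works with an arbitrary lift, for which $\Phi^{*}(d\log\tau)$ is only $p\,d\log\tau$ modulo an exact form, but fixing $\tau\mapsto\tau^{p}$ is harmless since everything is lifting-independent), and part (3) is the same induction on the four-term exact sequences using surjectivity of $\pi$ on the outer terms. The problem is part (2), where your route genuinely diverges from the paper's and, as described, does not close.

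Your key step is the ``reflection'' claim $\mathfrak{p}(\xi)\in P_{i,j}W_{n+1}\tilde{\tilde{\omega}}_{Y}^{q}\Rightarrow\xi\in P_{i,j}W_{n}\tilde{\tilde{\omega}}_{Y}^{q}$, to be proved ``by induction on $i+j$'' via the residue isomorphisms on graded pieces. The difficulty is that the residue data only sees the successive quotients $P_{i,j}/(P_{i-1,j}+P_{i,j-1})$, so an argument of this shape only controls membership in the \emph{sums} $P_{i-1,j}+P_{i,j-1}$, never in the individual terms $P_{i-1,j}$ and $P_{i,j-1}$ separately; for a filtration indexed by the partial order on $\mathbb{Z}_{\geq0}^{2}$ this is not a formality but the entire content of the statement. (An induction ``on $i+j$'' also has no evident base or direction here: to place $\xi$ in $P_{i,j}$ you must peel graded pieces off from the top, i.e.\ descend.) The paper resolves exactly this point by a \emph{descending lexicographic} induction, running Res-compatibility through $(r,s),(r-1,s),\dots,(1,s)$ to produce the induced map on $P_{r,s-1}+P_{0,s}$, and then — this is the device your sketch has no counterpart for — by introducing the auxiliary surjective residue $P_{i,j}W_{n}\tilde{\tilde{\omega}}_{Y}^{q}\to P_{i}W_{n}\tilde{\omega}_{Y^{(0,j)}}^{q-j}$ onto the stratum $Y^{(0,j)}$ equipped with the residual log structure $\tilde{M}_{1}$, which makes it a $(k,\mathbb{N})$-semistable log scheme with a \emph{single} weight filtration to which Nakkajima's Proposition 8.4 and Corollary 6.28 apply; this is what finally separates $P_{r,s-1}$ from $P_{r,s-1}+P_{0,s}$. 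Your mention of the $(-\log)$ complexes is not a substitute: those identify the graded pieces with $i=0$ or $j=0$, but they do not give you the filtered residue onto $Y^{(0,j)}$ needed to disentangle the two legs of the double filtration. Until you supply an argument (reflection-based or otherwise) that isolates the individual $P_{i,j}$ from the sums, part (2) — and hence the induced-map and surjectivity statements built on it — remains unproved.
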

\begin{proof}
The first part follows in the same way as Proposition 8.1 of \cite{Na},
by using a local admissible lifting $(Z,\tilde{N})$ of $(Y,\tilde{M})$
together with a lift of Frobenius $\Phi$. Then $\Phi^{*}(\tau)=\tau^{p}(1+pu)$
for some $u\in\mathcal{O}_{Z}\otimes_{W[\tau,\sigma]}W_{n}<\tau,\sigma>$
and so $\Phi^{*}(d\log\tau)$ is equivalent to $pd\log\tau$ modulo
an exact form. The same holds for $\sigma$. 

The second part follows in the same way as Proposition 8.4 of \cite{Na}.
The question is local, so we may assume that the admissible lift $(Z,\tilde{N})$
is etale over $\mathrm{Spec\ }W[X_{1},\dots,X_{n},Y_{1},\dots,Y_{n}],\mathbb{N}^{r}\oplus\mathbb{N}^{s}.$
First we see that, for a lift $\Phi$ of Frobenius we have that $\Phi^{*}(d\log X_{i})$
is equivalent modulo an exact form to $pd\log X_{i}$ for $1\leq i\leq r$
and that $\Phi^{*}(d\log Y_{j})$ is equivalent modulo an exact form
to $pd\log Y_{j}$ for $1\leq j\leq s$. This implies that the map
$\mathfrak{p}:W_{n}\tilde{\tilde{\omega}}_{Y}^{q}\to W_{n+1}\tilde{\tilde{\omega}}_{Y}^{q}$
preserves the weight filtration $P_{i,j}$. 

In order to see that $\pi:W_{n+1}\tilde{\tilde{\omega}}_{Y}^{q}\to W_{n}\tilde{\tilde{\omega}}_{Y}^{q}$
also preserves $P_{i,j}$ we use a descending induction on $(i,j)$
in lexicographic order. Note that $P_{r,s}W_{n}\tilde{\tilde{\omega}}_{Y}^{q}=W_{n}\tilde{\tilde{\omega}}_{Y}^{q}$,
so there is nothing to prove in this case. We can prove the result
for $(r,s-1)$ in the same way as Proposition 8.4 (2) of \cite{Na},
using the commutative diagrams \[
\xymatrix{P_{i,j}W_{n+1}\tilde{\tilde{\omega}}_{Y}^{q}\ar[d]_{\pi}\ar[r]^{\mathrm{Res}} & W_{n+1}\Omega_{Y^{(i,j)}}^{q-i-j}\ar[d]^{\pi}\\
P_{i,j}W_{n}\tilde{\tilde{\omega}}_{Y}^{q}\ar[r]^{\mathrm{Res}} & W_{n}\Omega_{Y^{(i,j)}}^{q-i-j}}
\]
for $(i,j)$ successively equal to $(r,s),(r-1,s),\dots,(1,s)$. At
the last step we get a commutative diagram of exact sequences \[
\xymatrix{0\ar[r] & P_{r,s-1}W_{n+1}\tilde{\tilde{\omega}}_{Y}^{q}+P_{0,s}W_{n+1}\tilde{\tilde{\omega}}_{Y}^{q}\ar[r] & P_{r,s-1}W_{n+1}\tilde{\tilde{\omega}}_{Y}^{q}+P_{1,s}W_{n+1}\tilde{\tilde{\omega}}_{Y}^{q}\ar[d]_{\pi}\ar[r] & W_{n+1}\Omega_{Y^{(1,s)}}^{q-s-1}\ar[r]\ar[d]_{\pi} & 0\\
0\ar[r] & P_{r,s-1}W_{n}\tilde{\tilde{\omega}}_{Y}^{q}+P_{0,s}W_{n}\tilde{\tilde{\omega}}_{Y}^{q}\ar[r] & P_{r,s-1}W_{n}\tilde{\tilde{\omega}}_{Y}^{q}+P_{1,s}W_{n}\tilde{\tilde{\omega}}_{Y}^{q}\ar[r] & W_{n}\Omega_{Y^{(1,s)}}^{q-s-1}\ar[r] & 0}
,\]
which means there is an induced morphism $\pi:P_{r,s-1}W_{n+1}\tilde{\tilde{\omega}}_{Y}^{q}+P_{0,s}W_{n+1}\tilde{\tilde{\omega}}_{Y}^{q}\to P_{r,s-1}W_{n}\tilde{\tilde{\omega}}_{Y}^{q}+P_{0,s}W_{n}\tilde{\tilde{\omega}}_{Y}^{q}.$

At this stage, we note that we can define \[
Y^{(0,s)}=\bigsqcup_{\substack{T\subseteq\{1,\dots,n\}\\
\#T=s}
}\left(\bigcap_{i\in T}Y_{i}^{2}\right).\]
This will be a simple reduced normal crossings divisor over $k$ and
we can endow it with the pullback of the log structure $\tilde{M}_{1}$
so that $(Y,\tilde{M})$ is a $(k,\mathbb{N})$-semistable log scheme,
in the terminology of section 2.4 of \cite{Mo}. There is a surjective
residue mophism obtained via restriction \[
P_{i,j}W_{n}\tilde{\tilde{\omega}}_{Y}^{q}\stackrel{\mathrm{Res}}{\to}P_{i}W_{n}\tilde{\omega}_{Y^{(0,j)}}^{q-j},\]
which respects the weight filtrations. Just as the commutative diagram
8.4.3 of \cite{Na} is obtained, we can use the injectivity of $\mathfrak{p}:W_{n}\tilde{\omega}_{Y^{(0,s)}}^{q}\to W_{n+1}\tilde{\omega}_{Y^{(0,s)}}^{q}$
for $Y^{(0,s)}/k$ (Corollary 6.28 (2) of \cite{Na}) to see that
there is a commutative diagram\[
\xymatrix{P_{0,s}W_{n+1}\tilde{\tilde{\omega}}_{Y}^{q}\ar[d]_{\pi}\ar[r]^{\mathrm{Res}} & P_{0}W_{n+1}\tilde{\omega}_{Y^{(0,s)}}^{q-s}\ar[d]^{\pi}\\
P_{0,s}W_{n}\tilde{\tilde{\omega}}_{Y}^{q}\ar[r]^{\mathrm{Res}} & P_{0}W_{n}\tilde{\omega}_{Y^{(0,s)}}^{q-s}}
.\]
 We therefore get a commutative diagram of exact sequences: \[
\xymatrix{0\ar[r] & P_{r,s-1}W_{n+1}\tilde{\tilde{\omega}}_{Y}^{q}\ar[r] & P_{r,s-1}W_{n+1}\tilde{\tilde{\omega}}_{Y}^{q}+P_{0,s}W_{n+1}\tilde{\tilde{\omega}}_{Y}^{q}\ar[d]_{\pi}\ar[r] & P_{0}W_{n+1}\tilde{\omega}_{Y^{(0,s)}}^{q-s}\ar[r]\ar[d]_{\pi} & 0\\
0\ar[r] & P_{r,s-1}W_{n}\tilde{\tilde{\omega}}_{Y}^{q}\ar[r] & P_{r,s-1}W_{n}\tilde{\tilde{\omega}}_{Y}^{q}+P_{0,s}W_{n}\tilde{\tilde{\omega}}_{Y}^{q}\ar[r] & P_{0}W_{n}\tilde{\omega}_{Y^{(0,s)}}^{q-s}\ar[r] & 0}
,\]
so there is an induced morphism $\pi:P_{r,s-1}W_{n+1}\tilde{\tilde{\omega}}_{Y}^{q}\to P_{r,s-1}W_{n}\tilde{\tilde{\omega}}_{Y}^{q}$.

Finally, the third part follows in the same way as Corollary 8.6.4
of \cite{Na}. For an admissible lift $(Z,\tilde{N}),$ let $Z_{1}:=Z\times_{W}k$.
We have surjective morphisms $W_{n}\Omega_{Z_{1}}^{q}\to P_{0,0}W_{n}\tilde{\tilde{\omega}}_{Y}^{q}$,
which commute with the transition morphisms $\pi$. So $\pi$ is surjective
for $P_{0,0}$. Using the exact sequences of the form\[
0\to P_{0,j-1}W_{n}\tilde{\tilde{\omega}}_{Y}^{q}\to P_{0,j}W_{n}\tilde{\tilde{\omega}}_{Y}^{q}\to P_{0}W_{n}\tilde{\omega}_{Y^{(0,j)}}^{q-j}\to0\]
and the surjectivity of $\pi$ on the third term, we prove by induction
on $j$ that $\pi$ is surjective for $P_{0,j}$. The same statement
holds for $P_{i,0}.$Then, we prove that $\pi$ is surjective for
a general $P_{i,j}$ by induction on $i+j$, using the exact sequences
of the form\[
0\to P_{i-1,j}W_{n}\tilde{\tilde{\omega}}_{Y}^{q}+P_{i,j-1}W_{n}\tilde{\tilde{\omega}}_{Y}^{q}\to P_{i,j}W_{n}\tilde{\tilde{\omega}}_{Y}^{q}\to W_{n}\Omega_{Y^{(i,j)}}^{q-i-j}\to0.\]

\end{proof}

\section{Generalizing the Mokrane spectral sequence}

We define a double complex $W_{n}A^{\cdot\cdot}$ as follows. Its
terms are\[
W_{n}A^{ij}:=\bigoplus_{k=0}^{j}W_{n}\tilde{\tilde{\omega}}_{Y}^{i+j+2}/P_{k,i+j+2}+P_{i+j+2,j-k}\mbox{ for }i,j\geq0\]
and $W_{n}A^{ij}:=0$ otherwise. The operators $d,\pi,F,V$ of $W_{\cdot}\tilde{\tilde{\omega}}^{\cdot}$
induce operators $d',\pi,F,V$ of the pro-complexes $W_{\cdot}A^{\cdot j}$.
For $x$ in the direct summand $W_{n}\tilde{\tilde{\omega}}_{Y}^{i+j+2}/P_{k,i+j+2}+P_{i+j+2,j-k}$
of $W_{n}A^{ij}$, $d'x$ is the class of $(-1)^{j}d\tilde{x}$, where
$\tilde{x}$ is a lift of $x$ in $W_{n}\tilde{\tilde{\omega}}_{Y}^{i+j+2}$.
We also have a differential $d'':W_{n}A^{ij}\to W_{n}A^{ij+1}$ given
by\[
d''x=(-1)^{i}\left(\frac{d\tau}{\tau}\wedge x+\frac{d\sigma}{\sigma}\wedge x\right),\]
where $\frac{d\tau}{\tau}$ and $\frac{d\sigma}{\sigma}$ are the
global sections of $W_{n}\tilde{\tilde{\omega}}_{Y}^{1}$ defined
in Lemma \ref{global sections}. We have $d'd''=d''d'$, so we indeed
get a double pro-complex $(W_{\cdot}A^{\cdot\cdot},d',d'').$ As in
Lemma 3.9 of \cite{Mo}, we can use devissage by weights to see that
the components of this pro-complex are $p$-torsion-free. Let $W_{\cdot}A^{\cdot}$
be the simple pro-complex associated to the double pro-complex $W_{\cdot}A^{\cdot\cdot}$. 

We define now an endomorphism $\nu$ of bidegree $(-1,1)$ of $W_{n}A^{\cdot\cdot}$
which will induce the monodromy operator on cohomology. For each $k\in\{0,\dots,j\}$
we have natural maps \[
W_{n}\tilde{\tilde{\omega}}_{Y}^{i+j+2}/P_{k,i+j+2}+P_{i+j+2,j-k}\to W_{n}\tilde{\tilde{\omega}}_{Y}^{i+j+2}/P_{k,i+j+2}+P_{i+j+2,j+1-k}\oplus W_{n}\tilde{\tilde{\omega}}_{Y}^{i+j+2}/P_{k+1,i+j+2}+P_{i+j+2,j-k},\]
which are sums of $(-1)^{i+j+1}\mathrm{proj}$ on each factor. Summing
over $k$ we get maps $\nu:W_{n}A^{ij}\to W_{n}A^{i-1j+1}$, which
induce an endomorphism $\nu$ of bidegree $(-1,1)$.

The morphism of compexes $W_{n}\tilde{\tilde{\omega}}_{Y}^{\cdot}\to W_{n}A^{\cdot0}$
given by \[
x\mapsto\frac{d\tau}{\tau}\wedge\frac{d\sigma}{\sigma}\wedge x\]
factors through $W_{n}\omega_{Y}^{\cdot}$. We get a morphism of complexes
\[
\Theta:W_{n}\omega_{Y}^{\cdot}\to W_{n}A^{\cdot}.\]

The following lemma is analogous to Theorem 9.9 of \cite{Na}. It
ensures that the resulting spectral sequence will be compatible with
the Frobenius endomorphism (defined as an endomorphism of $W_{n}$-modules).
We let $\Phi_{n}:W_{n}\omega_{Y}\to W_{n}\omega_{Y}$ be the Frobenius
endomorphism induced by the absolute Frobenius endomorphism of $(Y,M)$. 
\begin{lem}
\label{compatibility with frobenius}Let $n$ be a positive integer.
Then the following hold:
\begin{enumerate}
\item There exists a unique endomorphism $\tilde{\tilde{\Phi}}_{n}^{\cdot,\cdot}$
of $W_{n}A^{\cdot,\cdot}$ of double complexes, making the following
diagram commutative:\[
\xymatrix{W_{n+1}A^{q,m}\ar[r]^{\pi}\ar[d]_{p^{q}F} & W_{n}A^{q,m}\ar[d]^{\tilde{\tilde{\Phi}}_{n}^{q,m}}\\
W_{n}A^{q,m}\ar[r]^{\mathrm{id}} & W_{n}A^{q,m}}
.\]

\item The endomorphism $\tilde{\tilde{\Phi}}_{n}^{\cdot,\cdot}$ induces
an endomorphism $\tilde{\tilde{\Phi}}_{n}$ of the complex $W_{n}A^{\cdot}$,
fitting in a commutative diagram \[
\xymatrix{W_{n}\omega_{Y}^{\cdot}\ar[r]^{\Phi_{n}}\ar[d]_{\Theta} & W_{n}\omega_{Y}^{\cdot}\ar[d]^{\Theta}\\
W_{n}A^{\cdot}\ar[r]^{\tilde{\tilde{\Phi}}_{n}} & W_{n}A^{\cdot}}
.\]

\item Finally, the Poincare residue isomorphism $\mathrm{Res}$ fits in
the following commutative diagrams for $i,j\geq1$: \[
\xymatrix{Gr_{i,j}W_{n}\tilde{\tilde{\omega}}_{Y}^{q}\ar[r]^{\mathrm{Res}}\ar[d]_{\Psi_{n}} & W_{n}\Omega_{Y^{(i,j)}}^{q-i-j}\ar[d]^{p^{i+j}\Phi_{n}}\\
Gr_{i,j}W_{n}\tilde{\tilde{\omega}}_{Y}^{q}\ar[r]^{\mathrm{Res}} & W_{n}\Omega_{Y^{(i,j)}}^{q-i-j}}
,\]
where $\Psi_{n}$ is an endomorphism of $W_{n}\tilde{\tilde{\omega}}_{Y}^{\cdot}$
which respects the weight filtration $P_{i,j}$ and which induces
$\tilde{\tilde{\Phi}}_{n}^{\cdot,\cdot}$ on $W_{n}A^{\cdot,\cdot}$. 
\end{enumerate}
\end{lem}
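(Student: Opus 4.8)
The plan is to adapt, essentially verbatim, the proof of Theorem 9.9 of \cite{Na} (which itself follows \S 3 of \cite{Mo}) to the bidegree-doubled complex $W_{\bullet}A^{\cdot\cdot}$. The underlying input is that each of $W_{n}\tilde{\tilde{\omega}}_{Y}^{\cdot}$, $W_{n}\omega_{Y}^{\cdot}$ and the de Rham--Witt complexes $W_{n}\Omega_{Y^{(i,j)}}^{\cdot}$ of the smooth strata is a formal de Rham--Witt complex in the sense of \S 6 of \cite{Na} (as remarked after Lemma \ref{tensor with R_n}, this follows from the Cartier isomorphisms of Lemma \ref{lem:Cartier} and the formal properties verified in \S 3). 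In particular $\mathfrak{p}$ is injective with image $pW_{n+1}\tilde{\tilde{\omega}}_{Y}^{\cdot}$, the transition map $\pi$ is surjective, the components of the pro-complex $W_{\bullet}A^{\cdot\cdot}$ are $p$-torsion-free, and both $F$ and $\pi$ preserve the double weight filtration $P_{i,j}$ --- for $\pi$ this is Lemma \ref{compatibility with projections}(2), and for $F$ one uses the same local computation with a lift of Frobenius $\Phi$, for which $\Phi^{*}(d\log X_{i})$ and $\Phi^{*}(d\log Y_{j})$ are congruent to $p\,d\log X_{i}$, resp.\ $p\,d\log Y_{j}$, modulo exact forms. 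Hence $p^{q}F\colon W_{n+1}\tilde{\tilde{\omega}}_{Y}^{q+m+2}\to W_{n}\tilde{\tilde{\omega}}_{Y}^{q+m+2}$ descends to a map $p^{q}F\colon W_{n+1}A^{q,m}\to W_{n}A^{q,m}$.

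For part (1): uniqueness of $\tilde{\tilde{\Phi}}_{n}^{q,m}$ is immediate from the surjectivity of $\pi$, so the content is that the descended $p^{q}F$ annihilates $\ker\pi$ on $W_{n+1}A^{q,m}$. I would establish this by the same devissage by weights as in \S 9 of \cite{Na}: by Corollary \ref{identifying graded pieces} the graded pieces $\mathrm{Gr}_{i,j}W_{n}\tilde{\tilde{\omega}}_{Y}^{\cdot}$ are the shifted, Tate-twisted de Rham--Witt complexes $W_{n}\Omega_{Y^{(i,j)}}^{\cdot}[-i-j](-i-j)$ of the $Y^{(i,j)}$, so the claim reduces to the corresponding fact for smooth $k$-schemes (\cite{I}, \cite{IR}), the Tate twist $(-i-j)$ of a graded piece appearing inside $A^{q,m}$ (for which $i+j\ge m+2$) accounting for the discrepancy between the "$p^{q+m+2}F$" dictated by the form degree and the "$p^{q}F$" of the statement, and the $p$-torsion-freeness making the attendant division by $p^{m+2}$ legitimate and unambiguous. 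It then remains to see that $\tilde{\tilde{\Phi}}_{n}^{\cdot,\cdot}$ is a map of double complexes: commutation with $d'$ follows from $dF=pFd$ together with the bidegree bookkeeping, and commutation with $d''$ from the facts that $F$ commutes with $\wedge\tfrac{d\tau}{\tau}$ and $\wedge\tfrac{d\sigma}{\sigma}$ on $W_{n}\tilde{\tilde{\omega}}_{Y}^{\cdot}$ (since $F$ is multiplicative and $F\,d\log\tau=d\log\tau$, $F\,d\log\sigma=d\log\sigma$, as in \cite{Mo}) and that the two occurrences of $\tilde{\tilde{\Phi}}_{n}$ in each $d''$-square carry the same power of $p$.

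Part (2) is then formal: a morphism of double complexes induces one of the associated total complexes, so $\tilde{\tilde{\Phi}}_{n}$ is defined on $W_{n}A^{\cdot}$, and the square with $\Theta$ commutes because, after precomposing the two composites $\tilde{\tilde{\Phi}}_{n}\circ\Theta$ and $\Theta\circ\Phi_{n}$ with the surjection $\pi\colon W_{n+1}\omega_{Y}^{q}\to W_{n}\omega_{Y}^{q}$ and using $\Phi_{n}\circ\pi=p^{q}F$ on $W_{n+1}\omega_{Y}^{\cdot}$, the defining relation of $\tilde{\tilde{\Phi}}_{n}$ from part (1), and $F(\tfrac{d\tau}{\tau}\wedge\tfrac{d\sigma}{\sigma}\wedge(-))=\tfrac{d\tau}{\tau}\wedge\tfrac{d\sigma}{\sigma}\wedge F(-)$, both composites become the single map $\tfrac{d\tau}{\tau}\wedge\tfrac{d\sigma}{\sigma}\wedge p^{q}F(-)$. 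For part (3) I would take $\Psi_{n}$ to be the unique endomorphism of $W_{n}\tilde{\tilde{\omega}}_{Y}^{\cdot}$ with $\Psi_{n}\circ\pi=p^{(\bullet)}F$ in each form degree; existence, uniqueness and preservation of $P_{i,j}$ follow exactly as in part (1) and the proof of Lemma \ref{compatibility with projections}, and $\Psi_{n}$ induces $\tilde{\tilde{\Phi}}_{n}^{\cdot,\cdot}$ on $W_{\bullet}A^{\cdot,\cdot}$ by construction (modulo the $p$-power division dictated by the Tate twists in the definition of $A^{\cdot,\cdot}$). The residue square then reduces, on precomposing with $\pi$, to the compatibility of the Poincaré residue with $F$ --- which holds because $F$ commutes with wedging by the logarithmic poles $d\log X_{k},\,d\log Y_{l}$ that $\mathrm{Res}$ strips away --- the power $p^{i+j}$ being precisely the Tate twist $(-i-j)$ in the isomorphism $\mathrm{Res}\colon\mathrm{Gr}_{i,j}W_{n}\tilde{\tilde{\omega}}_{Y}^{\cdot}\xrightarrow{\sim}W_{n}\Omega_{Y^{(i,j)}}^{\cdot}[-i-j](-i-j)$; passing to $W_{n}$ via $\Phi_{n}\circ\pi=p^{q-i-j}F$ on the stratum yields $\mathrm{Res}\circ\Psi_{n}=p^{i+j}\Phi_{n}\circ\mathrm{Res}$.

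I expect the main obstacle to be the bookkeeping of powers of $p$: reconciling the "$+2$" form-degree shift in the definition of $W_{n}A^{ij}$, the de Rham--Witt convention "$p^{(\text{form degree})}F$" for the Frobenius, and the Tate twists $(-i-j)$ on the graded pieces, while checking at each stage that $p^{q}F$ genuinely descends through $\pi$ (equivalently, the $p$-torsion-freeness of the relevant subquotients and the compatibility of $F$ with $P_{i,j}$). This is exactly the technical core of Theorem 9.9 of \cite{Na}, and the substance of the proof is to check that his arguments carry over without change to the bidegree-doubled setting.
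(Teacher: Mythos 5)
Your plan is the same as the paper's: both adapt Theorem 9.9 of \cite{Na} (and \S 3 of \cite{Mo}) to the doubled setting, using the double weight filtration, the residue isomorphisms onto the de Rham--Witt complexes of the smooth strata $Y^{(i,j)}$, the compatibilities of $\mathfrak{p}$, $\pi$ and $F$ with $P_{i,j}$ from Lemma \ref{compatibility with projections}, and $p$-torsion-freeness to manage the powers of $p$; parts (2) and (3) of your argument are essentially what the paper does.

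The one place where you diverge, and where your plan as written would stumble, is the existence step in part (1). You propose to show that $p^{q}F$ annihilates $\ker\pi$ on $W_{n+1}A^{q,m}$ by d\'evissage. For $q\geq 1$ no d\'evissage is needed: since $\mathfrak{p}\circ\pi=p$, one has $p^{q}F=(p^{q-1}F\circ\mathfrak{p})\circ\pi$, and $p^{q-1}F\circ\mathfrak{p}$ preserves $P_{i,j}$, hence descends to the quotient defining $W_{n}A^{q,m}$; this explicit formula $\Psi_{n}^{j,\cdot}=F\circ p^{j-1}\circ\mathfrak{p}$ is exactly the paper's construction. For $q=0$, however, the claim that $F$ kills $\ker\pi$ is \emph{false} on $W_{n+1}\tilde{\tilde{\omega}}_{Y}^{m+2}$ itself: $\ker\pi$ contains $dV^{n}W_{1}\tilde{\tilde{\omega}}_{Y}^{m+1}$ and $F(dV^{n}b)=dV^{n-1}b\neq 0$ in general. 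It only becomes true after passing to the quotient $W_{n+1}A^{0,m}$, and the reason is that this quotient collapses: its only nonzero graded pieces are those with $i+j=m+2$, so via $\mathrm{Res}$ it is identified with $\bigoplus W_{n+1}(\mathcal{O}_{Y^{(k+1,m-k+1)}})$ in form degree zero, where $\ker\pi=V^{n}W_{1}(\mathcal{O})$ has no $dV^{n}$ component and the Witt-vector Frobenius does factor through restriction. The paper makes exactly this case distinction, defining $\tilde{\tilde{\Phi}}_{n}^{0,q}$ separately via the residue isomorphisms and the Frobenius of $W_{n}(\mathcal{O}_{Y^{(k+1,j-k+1)}})$. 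Your d\'evissage does land on the right objects for $q=0$, so the argument is salvageable, but you must make this reduction explicit rather than appeal to a general statement that $p^{q}F$ kills $\ker\pi$ degree by degree; as a general filtration argument ("kills each graded piece of $\ker\pi$, hence kills $\ker\pi$") it does not close up on its own.
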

\begin{proof}
The proof is essentially the same as that of Theorem 9.9 of \cite{Na}.
We emphasize only the key points. We can define a morphism $\Psi_{n}^{j,q}:W_{n}\tilde{\tilde{\omega}}_{Y}^{q}\to W_{n}\tilde{\tilde{\omega}}_{Y}^{q}$
via the composition\[
W_{n}\tilde{\tilde{\omega}}_{Y}^{q}\stackrel{\mathfrak{p}}{\to}W_{n+1}\tilde{\tilde{\omega}}_{Y}^{q}\stackrel{p^{j-1}}{\longrightarrow}W_{n+1}\tilde{\tilde{\omega}}_{Y}^{q}\stackrel{F}{\to}W_{n}\tilde{\tilde{\omega}}_{Y}^{q}.\]
The fact that these morphisms commute with the maps $\frac{d\tau}{\tau}\wedge$
and $\frac{d\sigma}{\sigma}\wedge$ follows from the proof of the
first part of Lemma \ref{compatibility with projections}. This implies
that the second diagram is commutative. The fact that the $\Psi_{n}^{\cdot,\cdot}$
respect the weight filtration follows from the analogous statement
for $\mathfrak{p}$, which is proved in Lemma \ref{compatibility with projections}
as well. This means that we can use $\Psi_{n}^{j,j+q+2}$ to define
endomorphisms $\tilde{\tilde{\Phi}}_{n}^{j,q}$ of $W_{n}A^{jq}$,
at least for $j\geq1$. For $j=0$ we use the Frobenius endomorphism
$\Phi_{n}$ of $W_{n}(\mathcal{O}_{Y^{(k+1,j-k+1)}})$ together with
the residue isomorphisms to define $\tilde{\tilde{\Phi}}_{n}^{0,q}$.
The commutativity of the first diagram now follows from the definitions,
from the commutative diagram\[
\xymatrix{W_{n+1}\tilde{\tilde{\omega}}_{Y}^{q,m}\ar[r]^{\pi}\ar[d]_{p^{q}F} & W_{n}\tilde{\tilde{\omega}}_{Y}^{q,m}\ar[d]^{\Psi_{n}^{q,m}}\\
W_{n}A^{q,m}\ar[r]^{\mathrm{id}} & W_{n}A^{q,m}}
.\]
(which is deduced from $\mathfrak{p}d=d\mathfrak{p}$ and $dF=pFd$)
and from diagram 9.2.2 of \cite{Na} in the case of a smooth morphism.
The fact that the first diagram is commutative ensures the uniqueness
of $\Phi_{n}^{q,m}$. Finally, the third commutative diagram follows
from the surjectivity of $\pi$ proved in Lemma \ref{compatibility with projections},
from from diagram 9.2.2 of \cite{Na} in the case of a smooth morphism
and from the commutative diagrams \[
\xymatrix{P_{i,j}W_{n+1}\tilde{\tilde{\omega}}_{Y}^{q}\ar[d]_{\pi}\ar[r]^{\mathrm{Res}} & W_{n+1}\Omega_{Y^{(i,j)}}^{q-i-j}\ar[d]^{\pi}\\
P_{i,j}W_{n}\tilde{\tilde{\omega}}_{Y}^{q}\ar[r]^{\mathrm{Res}} & W_{n}\Omega_{Y^{(i,j)}}^{q-i-j}}
\]
for $i,j\geq1$. \end{proof}
\begin{prop}
The sequence \[
0\to W_{n}\omega_{Y}^{\cdot}\stackrel{\Theta}{\to}W_{n}A^{\cdot0}\stackrel{d''}{\to}W_{n}A^{\cdot1}\stackrel{d''}{\to}\dots\]
is exact. \end{prop}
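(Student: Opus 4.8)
The plan is to prove the exactness \'etale-locally on $Y$, where we may fix an admissible lifting $(Z,\tilde{N})$ of a cover and make all the sheaves $W_{n}\tilde{\tilde{\omega}}_{Y}^{q}$, their weight filtrations $P_{i,j}$, the maps $\wedge\frac{d\tau}{\tau}$, $\wedge\frac{d\sigma}{\sigma}$ and the residue morphisms completely explicit in terms of the constructions of Sections 3.2 and 3.3. With these descriptions in hand the sequence in question is, in each row-degree $i$, the complex built out of the quotients of $W_{n}\tilde{\tilde{\omega}}_{Y}^{\cdot}$ by the various $P_{k,\ast}+P_{\ast,l}$ with differential given by wedging with $\frac{d\tau}{\tau}=\sum_{i}\frac{dX_{i}}{X_{i}}$ and $\frac{d\sigma}{\sigma}=\sum_{j}\frac{dY_{j}}{Y_{j}}$; what must be shown is that this is a Koszul-type resolution of $W_{n}\omega_{Y}^{i}$ in the two ``variables'' $\frac{d\tau}{\tau}$ and $\frac{d\sigma}{\sigma}$. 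This is the two-parameter analogue of the exactness Mokrane proves in the semistable case (section 3 of \cite{Mo}, with the corrections of \cite{Na}), and I would follow that argument, adapted to handle the parameters $\tau$ and $\sigma$ simultaneously.

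The essential device is d\'evissage by the weight filtration $P_{i,j}$. By Corollary \ref{identifying graded pieces} the associated graded of $W_{n}\tilde{\tilde{\omega}}_{Y}^{q}$ for this filtration is a direct sum of the de Rham--Witt complexes $W_{n}\Omega_{Y^{(a,b)}}^{\cdot}$ of the smooth strata (up to shift and Tate twist), and under the residue isomorphisms the differentials $\wedge\frac{d\tau}{\tau}$ and $\wedge\frac{d\sigma}{\sigma}$ become signed sums of the restriction maps $W_{n}\Omega_{Y^{(a,b)}}^{i}\to W_{n}\Omega_{Y^{(a+1,b)}}^{i}$ and $W_{n}\Omega_{Y^{(a,b)}}^{i}\to W_{n}\Omega_{Y^{(a,b+1)}}^{i}$ coming from the combinatorics of which strata are faces of which. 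I would first check that the whole sequence (including the augmentation $\Theta$) is strictly compatible with the weight filtrations -- the same cocycle-lifting argument used above for the filtered pieces of $\omega_{(\mathcal{Z}_{n},\tilde{N})}^{\cdot}$, resting on the surjectivity of $\pi$ on the $P_{i,j}$ from Lemma \ref{compatibility with projections} -- so that exactness may be tested on the associated graded; injectivity of $\Theta$ is a special case of the same weight bookkeeping.

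On the associated graded the statement is purely combinatorial: it is the acyclicity in positive degrees, with the asserted augmentation, of a simplicial (nerve-type) complex attached to the normal crossings divisor $Y$, equivalently the exactness of an ordinary Koszul complex. This is reduced stratum by stratum to elementary facts about de Rham--Witt complexes of smooth $k$-schemes, which hold for all $n$ at once; at $n=1$, where $W_{1}\tilde{\tilde{\omega}}_{Y}^{\cdot}$ is the honest logarithmic de Rham complex with its local product decomposition, one can also see everything by hand. Since all the maps in sight commute with $\pi$, $F$ and $V$ by Lemma \ref{compatibility with projections}, the exactness holds for the pro-complex $W_{\cdot}A^{\cdot\cdot}$ as well; this is also what underlies the $p$-torsion-freeness noted after the definition of $W_{\cdot}A^{\cdot\cdot}$.

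The step I expect to be the main obstacle is precisely the bookkeeping for the two parameters at once: controlling the double indexing in $P_{i,j}$, the signs in $d'$, $d''$ and in the residue and Koszul identifications, and -- most delicately -- verifying strict compatibility with the weight filtration so that the d\'evissage is legitimate. Once the graded statement has been isolated it is combinatorial and essentially formal, and the passage to general $n$ and to the pro-complex is routine given the structural results already established.
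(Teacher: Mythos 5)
Your overall strategy is the one the paper uses: view the columns as a Koszul-type complex in $\frac{d\tau}{\tau}$ and $\frac{d\sigma}{\sigma}$, filter every term by a (shifted) version of the double weight filtration $P_{i,j}$, and prove exactness on the associated graded, where the residue isomorphisms identify the pieces with data on the strata. Two of your stated worries are non-issues: since the filtration on each term is finite and exhaustive, exactness of every graded sequence already implies exactness of the filtered sequence (no separate strictness verification is needed), and injectivity of $\Theta$ falls out of the same bookkeeping. So the ``main obstacle'' you identify is not where the difficulty lies.

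The genuine gap is in the sentence disposing of the graded statement as ``purely combinatorial \dots reduced stratum by stratum to elementary facts about de Rham--Witt complexes of smooth $k$-schemes, which hold for all $n$ at once.'' Two things go wrong. First, Corollary \ref{identifying graded pieces} identifies $\mathrm{Gr}_{l,m}$ with $W_{n}\Omega_{Y^{(l,m)}}^{\cdot}$ only for $l,m\geq1$; the boundary pieces with $l=0$ or $m=0$ are instead two- and three-term complexes built from the ``compact support'' complexes $W_{n}\Omega(-\log Y^{i})$ and $W_{n}\Omega(-\log Y^{1}-\log Y^{2})$, so the graded sequence is not a nerve complex of smooth strata and its exactness is not pure combinatorics. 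Second, for $n>1$ the exactness of the graded sequence is not elementary even in Mokrane's one-parameter setting; the paper's route is to invoke $W\Omega^{\cdot}\otimes_{\mathbb{R}}^{L}\mathbb{R}_{n}\simeq W_{n}\Omega^{\cdot}$ --- which is precisely why Lemma \ref{tensor with R_n} is proved for the auxiliary complexes just named --- together with Prop.\ 2.3.7 of \cite{I2} to reduce to $n=1$, and then at $n=1$ to use the local product decomposition $Y=Y_{1}\times_{k}Y_{2}$, $Z=Z_{1}\times_{k}Z_{2}$ and the Cartier isomorphisms to realize the two-parameter graded sequence as the tensor product of two copies of the one-parameter exact sequence of Prop.\ 3.15 of \cite{Mo}. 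You mention the $n=1$ product picture only as an aside; in fact it, plus the $\otimes^{L}\mathbb{R}_{n}$ reduction, plus a treatment of the $l=0$ or $m=0$ pieces, is the proof, and without these your argument does not close.
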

\begin{proof}
We follow the proof of Prop. 3.15 of \cite{Mo}. Let $\theta:W_{n}\tilde{\tilde{\omega}}_{Y}^{i-1}\oplus W_{n}\tilde{\tilde{\omega}}_{Y}^{i-1}\to W_{n}\tilde{\tilde{\omega}}_{Y}^{i}$
be defined by \[
(x,y)\mapsto\frac{d\tau}{\tau}\wedge x+\frac{d\sigma}{\sigma}\wedge y.\]
 It suffices to check that the sequence\[
W_{n}\tilde{\tilde{\omega}}_{Y}^{i-2}\stackrel{(\frac{d\sigma}{\sigma}\wedge,\frac{d\tau}{\tau}\wedge)}{\to}W_{n}\tilde{\tilde{\omega}}_{Y}^{i-1}\oplus W_{n}\tilde{\tilde{\omega}}_{Y}^{i-1}\stackrel{\theta}{\to}W_{n}\tilde{\tilde{\omega}}_{Y}^{i}\to\]
\begin{equation}
\stackrel{\frac{d\tau}{\tau}\wedge\frac{d\sigma}{\sigma}\wedge}{\longrightarrow}W_{n}\tilde{\tilde{\omega}}_{Y}^{i+2}/(P_{0,i+2}+P_{i+2,0})\stackrel{d''}{\to}W_{n}\tilde{\tilde{\omega}}_{Y}^{i+3}/(P_{1,i+3}+P_{i+3,0})\oplus W_{n}\tilde{\tilde{\omega}}_{Y}^{i+3}/(P_{0,i+3}+P_{i+3,1})\stackrel{d''}{\to}\dots\label{long exact}\end{equation}
is exact. We do this by using first a devissage by weights, reducing
to the case $n=1$ and then using the fact that the scheme $Y$ is
locally etale over a product of (the special fibers of) strictly semistable
schemes. 

We let \[
K_{-4}=W_{n}\tilde{\tilde{\omega}}_{Y}^{i-2},\]
\[
K_{-3}=W_{n}\tilde{\tilde{\omega}}_{Y}^{i-1}\oplus W_{n}\tilde{\tilde{\omega}}_{Y}^{i-1},\]
\[
K_{-2}=W_{n}\tilde{\tilde{\omega}}_{Y}^{i},\]
\[
K_{j}=\bigoplus_{k=0}^{j}W_{n}\tilde{\tilde{\omega}}_{Y}^{i+j+2}/P_{k,i+j+2}+P_{i+j+2,j-k},j\geq0.\]
For $j\geq-4,j\not=-1$ we define a double filtration of $K_{j}$
as follows:\[
P_{l,m}K_{-4}=P_{l-2,m-2}W_{n}\tilde{\tilde{\omega}}_{Y}^{i-2},\]
\[
P_{l,m}K_{-3}=P_{l-2,m-1}W_{n}\tilde{\tilde{\omega}}_{Y}^{i-1}\oplus P_{l-1,m-2}W_{n}\tilde{\tilde{\omega}}_{Y}^{i-1},\]
\[
P_{l,m}K_{-2}:=P_{l-1,m-1}W_{n}\tilde{\tilde{\omega}}_{Y}^{i},\]
\[
P_{l,m}K_{j}:=\bigoplus_{k=0}^{j}P_{l+k,m+j-k}W_{n}\tilde{\tilde{\omega}}_{Y}^{i+j+2}/P_{k,i+j+2}+P_{i+j+2,j-k},j\geq0.\]
Here we set the convention $P_{l,m}W_{n}\tilde{\tilde{\omega}}^{i}=0$
if either $l<0$ or $m<0$. The sequence (\ref{long exact}) is a
filtered sequence and to prove exactness it suffices to prove exactness
for each graded piece\[
\mbox{Gr}_{l,m}K_{j}:=P_{l,m}K_{j}/(P_{l,m-1}K_{j}+P_{l-1,m}K_{j}).\]
For $l,m\geq0$ we can rewrite the sequences of graded pieces as:\[
\mbox{Gr}_{l-2,m-2}W_{n}\tilde{\tilde{\omega}}_{Y}^{i-2}\to\mbox{Gr}_{l-2,m-1}W_{n}\tilde{\tilde{\omega}}_{Y}^{i-1}\oplus\mbox{Gr}_{l-1,m-2}W_{n}\tilde{\tilde{\omega}}_{Y}^{i-1}\to\mbox{Gr}_{l-1,m-1}W_{n}\tilde{\tilde{\omega}}_{Y}^{i}\to\]
\[
\to\mbox{Gr}_{l,m}W_{n}\tilde{\tilde{\omega}}_{Y}^{i+2}\to\mbox{Gr}_{l+1,m}W_{n}\tilde{\tilde{\omega}}_{Y}^{i+3}\oplus\mbox{Gr}_{l,m+1}W_{n}\tilde{\tilde{\omega}}_{Y}^{i+3}\to\dots.\]
For $l<0$ or $m<0$ the sequence is trivial. 

It suffices to show that the sequence of complexes\[
\mbox{Gr}_{l-2,m-2}W_{n}\tilde{\tilde{\omega}}_{Y}^{\cdot}[-2]\to\mbox{Gr}_{l-2,m-1}W_{n}\tilde{\tilde{\omega}}_{Y}^{\cdot}[-1]\oplus\mbox{Gr}_{l-1,m-2}W_{n}\tilde{\tilde{\omega}}_{Y}^{\cdot}[-1]\to\mbox{Gr}_{l-1,m-1}W_{n}\tilde{\tilde{\omega}}_{Y}^{\cdot}\stackrel{\iota}{\to}\]
\label{main exactness}\[
\mbox{Gr}_{l,m}W_{n}\tilde{\tilde{\omega}}_{Y}^{\cdot}[2]\to\mbox{Gr}_{l+1,m}W_{n}\tilde{\tilde{\omega}}_{Y}^{\cdot}[3]\oplus\mbox{Gr}_{l,m+1}W_{n}\tilde{\tilde{\omega}}_{Y}^{\cdot}[3]\to\dots\]
is exact. Note that we can check this locally. When $l,m\geq1$ we
know by Lemma \ref{identifying graded pieces} that \[
\mbox{Gr}_{l,m}W_{n}\tilde{\tilde{\omega}}_{Y}^{\cdot}\simeq W_{n}\Omega_{Y^{(l,m)}}^{\cdot}[-l-m](-l-m).\]
For $l=0$ and $m\geq1$ let $Y_{D^{0,m}}$ be the normal crossing
divisor of $D^{0,m}$ corresponding to $s=0$. In this case we have
\[
\mbox{Gr}_{l,m}W_{n}\tilde{\tilde{\omega}}_{Y}^{\cdot}\simeq[W_{n}\Omega_{D^{0,m}}(-\log Y_{D^{o,m}})\to W_{n}\Omega_{D^{0,m}}]\]
and for $l=0,m=0$ we have the quasi-isomorphism \[
\mbox{Gr}_{l,m}W_{n}\tilde{\tilde{\omega}}_{Y}^{\cdot}\simeq[W_{n}\Omega_{Z}^{\cdot}(-\log Y^{1}-\log Y^{2})\to W_{n}\Omega_{Z}^{\cdot}(-\log Y^{1})\oplus W_{n}\Omega_{Z}^{\cdot}(-\log Y^{2})\to W_{n}\Omega_{Z}^{\cdot}],\]
where $Z=\underline{Z}\otimes_{W}k.$ In any case, $\mbox{Gr}_{l,m}W_{n}\tilde{\tilde{\omega}}^{\cdot}$
satisfies the property\[
(\lim_{\leftarrow n}\mbox{Gr}_{l,m}W_{n}\tilde{\tilde{\omega}}^{\cdot})\otimes_{\mathbb{R}}^{L}\mathbb{R}_{n}\simeq\mbox{Gr}_{l,m}W_{n}\tilde{\tilde{\omega}}^{\cdot}\]
by Lemma 1.3.3 of \cite{Mo} and Lemma \ref{tensor with R_n}. By
Prop. 2.3.7 of \cite{I2}, it suffices to check exactness of the sequence
(\ref{main exactness}) for $n=1$.

For $n=1$ and working locally with our admissible lifts we know that
the exact sequence (\ref{main exactness}) is the pullback to $Y$
of the corresponding exact sequence on $Y_{1}\times_{k}Y_{2}$. We
can assume that $Y=Y_{1}\times_{k}Y_{2}$ and $Z=Z_{1}\times_{k}Z_{2}$.
Each $Y_{i}$ for $i=1,2$ is a reduced normal crossings divisor in
$Z_{i}$, for which we know that \[
\mbox{Gr}_{l_{i}-2}W_{1}\tilde{\omega}_{Y_{i}}^{\cdot}[-1]\to\mbox{Gr}_{l_{i}-1}W_{1}\tilde{\omega}_{Y_{i}}\to\]
\[
\mbox{Gr}_{l}W_{1}\tilde{\omega}_{Y_{i}}^{\cdot}[1]\to\mbox{Gr}_{l+1}W_{1}\tilde{\omega}_{Y_{i}}^{\cdot}[2]\to\dots\]
is exact, by the proof of Proposition 3.15 of \cite{Mo}. In other
words, for $i=1,2$ we have quasi-isomorphisms between the top row
and the bottom row. Multiplying the quasi-isomorphisms for $i=1$
and $2$ gives us excatly the quasi-isomorphism $\iota$ needed to
prove the exactness of (\ref{main exactness}) in the case $n=1$.
Here, we use the Cartier isomorphisms for $W_{1}\tilde{\omega}_{Y_{i}}$
and for $W_{1}\tilde{\tilde{\omega}}_{Y}$ and the fact that \[
(\omega_{(Z_{1},\tilde{N}_{1})/k}^{\cdot}\otimes_{\mathcal{O}_{Z_{1}}}\mathcal{O}_{Y_{1}})\otimes_{k}(\omega_{(Z_{2},\tilde{N}_{2})/k}^{\cdot}\otimes_{\mathcal{O}_{Z_{2}}}\mathcal{O}_{Y_{2}})\simeq\omega_{(Z,\tilde{N})}^{\cdot}\otimes_{\mathcal{O}_{Z}}\mathcal{O}_{Y},\]
where the two complexes on the left determine $W_{1}\tilde{\omega}_{Y_{i}}^{\cdot}$
for $i=1,2$ and the one on the right determines $W_{1}\tilde{\tilde{\omega}}_{Y}^{\cdot}$. \end{proof}
\begin{cor}
The morphism of complexes $\Theta:W_{n}\omega_{Y}^{\cdot}\to W_{n}A^{\cdot}$
is a quasi-isomorphism. It induces a quasi-isomorphism $\Theta:W\omega_{Y}^{\cdot}\stackrel{\sim}{\to}WA^{\cdot}$.\end{cor}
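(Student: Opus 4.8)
The plan for the first statement is to deduce it formally from the preceding Proposition by a standard double-complex argument. Note first that $W_{n}A^{\cdot\cdot}$ is a first-quadrant double complex (its terms vanish unless $i,j\geq0$), so the total complex $W_{n}A^{\cdot}$ is formed by finite direct sums and the spectral sequences of the double complex converge. I would adjoin $W_{n}\omega_{Y}^{\cdot}$, via $\Theta$, as an extra column $j=-1$: let $B^{\cdot\cdot}$ be the double complex with $B^{i,-1}=W_{n}\omega_{Y}^{i}$ and $B^{ij}=W_{n}A^{ij}$ for $j\geq0$, horizontal differential $d'$ throughout, and vertical differential $\Theta$ out of the column $j=-1$ and $d''$ elsewhere; this is a genuine double complex because $\Theta$ is a morphism of complexes for $d'$ and $d''\circ\Theta=0$. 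The Proposition asserts precisely that each row of $B^{\cdot\cdot}$ (i.e. each augmented sequence $0\to W_{n}\omega_{Y}^{i}\xrightarrow{\Theta}W_{n}A^{i0}\xrightarrow{d''}W_{n}A^{i1}\to\cdots$) is exact; by the acyclic assembly lemma for first-quadrant double complexes, $\mathrm{Tot}(B^{\cdot\cdot})$ is then acyclic, and since $\mathrm{Tot}(B^{\cdot\cdot})$ is (up to sign and shift) the mapping cone of $\Theta\colon W_{n}\omega_{Y}^{\cdot}\to W_{n}A^{\cdot}$, this is exactly the assertion that $\Theta$ is a quasi-isomorphism. Equivalently, filtering $W_{n}A^{\cdot\cdot}$ so as to take $d''$-cohomology first, the exactness in the Proposition makes the $E_{1}$-term concentrated in the locus $j=0$, where it is $W_{n}\omega_{Y}^{\cdot}$ with differential $d'$, and the edge map is $\Theta$.

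For the second statement I would pass to the inverse limit over $n$, with $W\omega_{Y}^{\cdot}=\lim_{\leftarrow n}W_{n}\omega_{Y}^{\cdot}$ and $WA^{\cdot}=\lim_{\leftarrow n}W_{n}A^{\cdot}$, so that $\Theta\colon W\omega_{Y}^{\cdot}\to WA^{\cdot}$ is $\lim_{\leftarrow n}$ of the quasi-isomorphisms obtained in the first part. The thing to verify is that the transition maps $\pi$ are \emph{surjective in each degree} for both towers. For $W_{\cdot}\omega_{Y}^{\cdot}$ this is the usual surjectivity of $\pi$ for the logarithmic de Rham--Witt complex, which follows from the relation $p=\mathfrak{p}\circ\pi$, the injectivity of $\mathfrak{p}$, and the identification of the image of $\mathfrak{p}$ with the image of multiplication by $p$, just as for $W_{n}\tilde{\tilde{\omega}}_{Y}^{\cdot}$ above. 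For $W_{\cdot}A^{ij}$, which is a quotient of $W_{\cdot}\tilde{\tilde{\omega}}_{Y}^{i+j+2}$ by a sum of steps of the weight filtration $P_{\bullet,\bullet}$, surjectivity of $\pi$ follows from the surjectivity of $\pi$ on $W_{n}\tilde{\tilde{\omega}}_{Y}^{\cdot}$ together with the fact, proved in Lemma \ref{compatibility with projections}, that $\pi$ preserves $P_{\bullet,\bullet}$. Hence both towers, and therefore also the tower of mapping cones $\mathrm{cone}(\Theta\colon W_{n}\omega_{Y}^{\cdot}\to W_{n}A^{\cdot})$, are Mittag--Leffler in each degree; $\lim_{\leftarrow n}$ of such a tower of complexes commutes with cohomology (the $R^{1}\lim_{\leftarrow}$ terms in the Milnor exact sequence vanish). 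Since each of these cones is acyclic by the first part, $\lim_{\leftarrow n}\mathrm{cone}(\Theta)=\mathrm{cone}(\Theta\colon W\omega_{Y}^{\cdot}\to WA^{\cdot})$ is acyclic, i.e. $\Theta$ is a quasi-isomorphism in the limit.

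I do not anticipate a genuine obstacle: once the preceding Proposition is granted, the whole argument is formal homological algebra. The points requiring a little care are the bookkeeping in the assembly step --- making sure one uses exactness of the rows (not the columns) of $B^{\cdot\cdot}$, and that $\Theta$ genuinely lands in the total complex as a chain map, which is precisely what $d''\circ\Theta=0$ ensures --- and the verification that $\pi$ remains surjective on the quotient complexes $W_{n}A^{ij}$, for which Lemma \ref{compatibility with projections} is the essential input. Everything else is the standard passage from a levelwise quasi-isomorphism of towers with surjective transition maps to a quasi-isomorphism of the inverse limits.
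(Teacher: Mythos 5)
Your argument is correct and is exactly the (standard) homological algebra the paper leaves implicit: the Proposition says the rows of the augmented double complex are exact, so the total complex of the augmentation is acyclic and $\Theta$ is a quasi-isomorphism at each level $n$, and the passage to $W\omega_{Y}^{\cdot}\to WA^{\cdot}$ uses precisely the surjectivity of $\pi$ on $W_{n}\tilde{\tilde{\omega}}_{Y}^{\cdot}$ and its compatibility with the filtration $P_{i,j}$ from Lemma \ref{compatibility with projections}, as in Mokrane's treatment. No discrepancy with the paper's (unwritten) proof.
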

\begin{prop}
The endomorphism $\nu$ of $W_{\cdot}A^{\cdot\cdot}$ induces the
monodromy operator $N$ over $H_{\mbox{cris}}^{*}((Y,M)/(W,\mathbb{N})).$\end{prop}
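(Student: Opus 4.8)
The monodromy operator $N$ on $H^*_{\mathrm{cris}}((Y,M)/(W,\mathbb{N}))=\lim_{\leftarrow n}H^*(W_n\omega_Y^{\cdot})$ is, by construction, the endomorphism induced in cohomology by the connecting homomorphism of the short exact sequence of complexes $(\ref{eq: W-exact sequence})$ (equivalently, of $(\ref{eq:exact sequence})$). Since the preceding Proposition and its Corollary provide a quasi-isomorphism $\Theta\colon W\omega_Y^{\cdot}\stackrel{\sim}{\to}WA^{\cdot}$, it is enough to show that $\nu$, regarded as a degree-zero endomorphism of the simple complex $WA^{\cdot}$, realizes this connecting homomorphism transported along $\Theta$. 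The plan, following the argument of \cite{Mo} in the semistable case and its transposition to the formal de Rham--Witt setting in \cite{Na}, is to produce a companion double pro-complex $W_{\cdot}\tilde A^{\cdot\cdot}$ which resolves $W_{\cdot}\tilde\omega_Y^{\cdot}$ in the same way that $W_{\cdot}A^{\cdot\cdot}$ resolves $W_{\cdot}\omega_Y^{\cdot}$, together with a short exact sequence of the associated simple complexes
\[
0\to W_nA^{\cdot}[-1]\to W_n\tilde A^{\cdot}\to W_nA^{\cdot}\to 0
\]
lifting $(\ref{eq: W-exact sequence})$ compatibly with $\Theta$ and with the analogous quasi-isomorphism $\tilde\Theta\colon W\tilde\omega_Y^{\cdot}\stackrel{\sim}{\to}W\tilde A^{\cdot}$ given by wedging with the global section $\frac{du}{u}\in W_n\tilde\omega_Y^1$ of Lemma \ref{global sections}, and then to compute the connecting homomorphism of this sequence directly on the double complexes and recognize it as $\nu$.

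The companion complex $W_n\tilde A^{\cdot\cdot}$ is built by exactly the same quotient-by-the-weight-filtration recipe used to define $W_nA^{\cdot\cdot}$, starting from the complexes that compute $W_n\tilde\omega_Y^{\cdot}$ and the filtration induced on them from the weight filtration $P_{\cdot,\cdot}$ of Corollary \ref{identifying graded pieces}; its vertical differential is $(-1)^i\,\frac{du}{u}\wedge(\cdot)$, where $\frac{du}{u}$ is the common image of $\frac{d\tau}{\tau}$ and $\frac{d\sigma}{\sigma}$ (cf. the proof of Lemma \ref{lem:Tilde in tems of tilde^2}). That $\tilde\Theta$ is a quasi-isomorphism is proved word for word as the preceding Proposition --- devissage along the weight filtration, the property $(\lim_{\leftarrow n}\mathrm{Gr}_{l,m})\otimes_{\mathbb{R}}^{L}\mathbb{R}_{n}\simeq\mathrm{Gr}_{l,m}$ from Lemma \ref{tensor with R_n}, reduction to $n=1$ by \cite{I2} 2.3.7, and the product structure of $(Y,\tilde M)$ over $(k,\mathbb{N})$. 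The two maps in the displayed sequence are induced, respectively, by the quotient map $W_n\tilde{\tilde{\omega}}_Y^{\cdot}\to W_n\tilde\omega_Y^{\cdot}$ and by wedging with $\frac{d\tau}{\tau}$ (equivalently $\frac{d\sigma}{\sigma}$, these agreeing on the quotients that occur); that the sequence is exact is checked degree by degree, where at each spot it reduces to $(\ref{eq: W-exact sequence})$ combined with the identification of weight-graded pieces in Corollary \ref{identifying graded pieces}. Compatibility with the transition maps $\pi$, required to form the pro-systems and to pass to the limit over $n$ (where $H^*$ of $WA^{\cdot}$ computes $H^*_{\mathrm{cris}}((Y,M)/(W,\mathbb{N}))$), follows from Lemma \ref{compatibility with projections}.

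Finally, one computes the connecting homomorphism $\delta$ of the displayed short exact sequence by the usual diagram chase: picking a degreewise splitting $s$ of the surjection $W_n\tilde A^{\cdot}\to W_nA^{\cdot}$, the composite $ds-sd$ (with $d$ the total differential) lands in the image of $W_nA^{\cdot}[-1]\hookrightarrow W_n\tilde A^{\cdot}$, and composing with the inverse of that injection and unwinding the definitions of $d'$, $d''$ and of the canonical projections between the quotients $W_n\tilde{\tilde{\omega}}_Y^{\cdot}/(P_{k,*}+P_{*,j-k})$ identifies $\delta$ with $\pm\nu$ on cohomology. Since $\Theta$, $\tilde\Theta$, $\Theta$ are quasi-isomorphisms identifying the displayed sequence with $(\ref{eq: W-exact sequence})$, $\delta$ coincides with the connecting homomorphism of $(\ref{eq: W-exact sequence})$, i.e. with $N$; hence $\nu$ induces $N$. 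The substantive difficulty is not any single step but the bookkeeping --- fixing the re-indexing $W_nA^{\cdot}[-1]\to W_n\tilde A^{\cdot}\to W_nA^{\cdot}$ and the sign conventions so that the companion complex genuinely resolves $W_n\tilde\omega_Y^{\cdot}$ and the connecting map comes out to be exactly $\nu$ and not $\nu$ altered by a sign; this is precisely what the corresponding computations in \cite{Mo} and \cite{Na} handle, and they carry over to the present "double" situation without essential change.
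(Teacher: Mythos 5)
Your overall strategy---produce a companion complex resolving $W_{n}\tilde{\omega}_{Y}^{\cdot}$, fit it into a short exact sequence with $W_{n}A^{\cdot}$ on both ends mapping quasi-isomorphically from the sequence (\ref{eq: W-exact sequence}), and conclude by comparing distinguished triangles---is exactly the paper's strategy, and it is sound. But the implementation differs at the decisive point, and the paper's choice is what makes the proof short. The paper does \emph{not} build the companion complex intrinsically from $W_{n}\tilde{\omega}_{Y}^{\cdot}$ and then chase the connecting homomorphism; it defines $B_{n}^{ij}:=W_{n}A^{i-1,j}\oplus W_{n}A^{ij}$ with the twisted differential $d''(x_{1},x_{2})=(d''x_{1}+\nu(x_{2}),d''x_{2})$, so that the coboundary of $0\to W_{n}A^{\cdot}[-1]\to B_{n}^{\cdot}\to W_{n}A^{\cdot}\to0$ \emph{is} $\nu$ by construction, and the map $\Psi(x)=\bigl((\tfrac{d\sigma}{\sigma}-\tfrac{d\tau}{\tau})\wedge x,\ \tfrac{d\tau}{\tau}\wedge\tfrac{d\sigma}{\sigma}\wedge x\bigr)$ (modulo the appropriate pieces of the weight filtration) is then a quasi-isomorphism for free, by the five lemma applied to the commutative ladder whose outer verticals are $\Theta[-1]$ and $\Theta$. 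This eliminates both of the verifications you defer: there is no second exactness proof for a $\tilde{\Theta}$, and no diagram chase identifying $\delta$ with $\nu$.

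The one soft spot in your write-up is precisely where you lean on \cite{Mo} and \cite{Na} to handle ``the bookkeeping'': those sources (like this paper) use the cone construction, so the ``word for word'' transfer you invoke does not exist for the step where you compute $\delta$ via a degreewise splitting and recognize it as $\nu$. Moreover, defining $W_{n}\tilde{A}^{\cdot\cdot}$ by ``the same quotient-by-the-weight-filtration recipe'' applied to $W_{n}\tilde{\omega}_{Y}^{\cdot}$ is not routine: the filtration $P_{i,j}$ lives on $W_{n}\tilde{\tilde{\omega}}_{Y}^{\cdot}$, and $W_{n}\tilde{\omega}_{Y}^{\cdot}$ is only its quotient by $(\tfrac{d\tau}{\tau}-\tfrac{d\sigma}{\sigma})\wedge$ (Lemma \ref{lem:Tilde in tems of tilde^2}), so the induced filtration and its graded pieces would have to be worked out afresh. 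If you want to keep your architecture, the cleanest fix is to take your $W_{n}\tilde{A}^{\cdot}$ to be exactly the paper's $B_{n}^{\cdot}$ and your $\tilde{\Theta}$ to be $\Psi$; everything you assert then becomes either tautological or an application of the five lemma.
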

\begin{proof}
We define the double complex $B_{n}^{\cdot\cdot}$ as follows:\[
B_{n}^{\cdot\cdot}=W_{n}A^{i-1j}\oplus W_{n}A^{ij},i,j\geq0\]
\[
d'(x_{1},x_{2})=(d'x_{1},d'x_{2})\]
\[
d''(x_{1},x_{2})=(d''x_{1}+\nu(x_{2}),d''x_{2}).\]
We have a morphism of complexes $\Psi:W_{n}\tilde{\omega}_{Y}^{\cdot}\to B_{n}^{\cdot}$
defined as follows, for $x\in W_{n}\tilde{\omega}_{Y}^{i}$\[
\Psi(x)=\left(\left(\frac{d\sigma}{\sigma}-\frac{d\tau}{\tau}\right)\wedge x\pmod{P_{0,i+1}+P_{i+1,0}},\frac{d\tau}{\tau}\wedge\frac{d\sigma}{\sigma}\wedge x\pmod{P_{0,i+2}+P_{i+2,0}}\right).\]
Thus we have a commutative diagram of of exact sequences of complexes:\[
\xymatrix{0\ar[r] & W_{n}\omega_{Y}^{\cdot}[-1]\ar[d]^{\Theta[-1]}\ar[r] & W_{n}\tilde{\omega}_{Y}^{\cdot}\ar[r]\ar[d]^{\Psi} & W_{n}\omega_{Y}^{\cdot}\ar[r]\ar[d]^{\Theta} & 0\\
0\ar[r] & W_{n}A^{\cdot}[-1]\ar[r] & B_{n}^{\cdot}\ar[r] & W_{n}A^{\cdot}\ar[r] & 0}
,\]
where the left and right downward arrows are quasi-isomorphisms. Thus,
$\Psi$ is also a quasi-isomorphism and the commutiatve diagram defines
an isomorphism of distinguished triangles. Thus the monodromy operator
$N$ on cohomology is induced by the couboundary operator of the bottom
exact sequence, which by construction is $\nu$. 
\end{proof}
We can compute the monodromy filtration of the nilpotent operator
$N$ on cohomology from the monodromy filtration of $\nu$ on $W_{n}A^{\cdot}$.
We will exhibit a filtration $P_{k}(W_{n}A^{\cdot})=\oplus_{i,j\geq0}P_{k}(W_{n}A^{ij})$
which satisfies the following:
\begin{enumerate}
\item $\nu(P_{k}(W_{\cdot}A^{\cdot}))\subset P_{k-2}(W_{\cdot}A^{\cdot})(-1)$
\item For $k\geq0$ the induced map $\nu^{k}:\mbox{Gr}_{k}(W_{\cdot}A^{\cdot})\to\mbox{Gr}_{-k}(W_{\cdot}A^{\cdot})(-k)$
is an isomorphism. 
\end{enumerate}
A filtration satisfying these two properties must be the monodromy
filtration of $\nu$. 
\begin{note}
From now on, we will not work in the category $\mathfrak{C}$ of complexes
of sheaves of $W$-modules but rather in $\mathbb{Q}\otimes\mathfrak{C}$,
which is the category with the same set of objects as $\mathfrak{C}$,
but with morphisms $\mathbb{Q}\otimes\mbox{Hom}_{\mathfrak{C}}(A,B)$.
We will in fact identify the monodromy filtration of $\nu$ on $\mathbb{Q}\otimes W_{n}A^{\cdot}$,
but for simplicity of notation we still denote an object $A$ of $\mathfrak{C}$
as $A$ when we regard it as an object of $\mathbb{Q}\otimes\mathfrak{C}$. 
\end{note}
Define $P_{l}(W_{n}A^{\cdot\cdot}):=\oplus_{i,j\geq0}P_{l}(W_{n}A^{ij})$
for $l\geq0$, where \[
P_{l}(W_{n}A^{ij}):=\begin{cases}
0 & \mbox{ if }l<2n-2-j\\
\oplus_{k=0}^{j}(\sum_{m=0}^{l-2n+2+j}P_{k+m+1,2j-k+l-2n-m+3}W_{n}\tilde{\tilde{\omega}}_{Y}^{i+j+2}/P_{k,i+j+2}+P_{j-k,i+j+2}) & \mbox{ if }l\geq2n-2-j\end{cases}.\]
It is easy to check that $\nu(P_{l}(W_{n}A^{ij}))\subset P_{l-2}W_{n}A^{i+1,j-1}$.
Moreover, we can also compute the graded pieces $\mbox{Gr}_{l}(W_{n}A^{\cdot\cdot})=\bigoplus_{i,j\geq0}\mbox{Gr}_{l}(W_{n}A^{ij}),$
where \[
\mbox{Gr}_{l}(W_{n}A^{ij})==\begin{cases}
0 & \mbox{ if }l<2n-2-j\\
\oplus_{k=0}^{j}\oplus_{m=0}^{l-2n+2+j}\mbox{Gr}_{k+m+1,2j-k+l-2n-m+3}W_{n}\tilde{\tilde{\omega}}_{Y}^{i+j+2} & \mbox{ if }l\geq2n-2-j\end{cases}.\]
For $l=2n-2+h,$ with $h>0$ we claim that $\nu$ induces an injection
$\mbox{Gr}_{l}(W_{n}A^{ij})\hookrightarrow\mbox{Gr}_{l-1}(W_{n}A^{ij})$.
This can be verified through a standard combinatorial argument. We
have \[
\mbox{Gr}_{l}(W_{n}A^{ij})=\bigoplus_{k=0}^{j}\bigoplus_{m=0}^{h+j}\mbox{Gr}_{(k+m)+1,2j+h+1-(k+m)}W_{n}\tilde{\tilde{\omega}}_{Y}^{i+j+2}\]
and \[
\mbox{Gr}_{l-1}(W_{n}A^{ij})=\bigoplus_{k=0}^{j+1}\bigoplus_{m=0}^{h+j-1}\mbox{Gr}_{(k+m)+1,2j+h+1-(k+m)}W_{n}\tilde{\tilde{\omega}}_{Y}^{i+j+2}.\]
The map $\nu$ sends the term corresponding to a pair $(k,m)$ to
the direct sum of terms corresponding to $(k,m)$ and to $(k+1,m-1)$.
Therefore, it is easy to see that $\nu$ restricted to the direct
sum of terms for which $k+m$ is constant is injective, so $\nu$
is injective. Moreover, we see that $\nu^{h}$ induces an isomorphism
$\mbox{Gr}_{2n-2+h}(W_{n}A^{ij})\simeq\mbox{Gr}_{2n-2-h}(W_{n}A^{i-h,j+h})$,
since the terms on the right hand side are of the form \[
\bigoplus_{k=0}^{j}\bigoplus_{m=0}^{h+j}\mbox{Gr}_{(k+m)+1,2j+h+1-(k+m)}W_{n}\tilde{\tilde{\omega}}_{Y}^{i+j+2}\]
and the terms on the left hand side are of the form\[
\bigoplus_{m=0}^{j}\bigoplus_{k=0}^{h+j}\mbox{Gr}_{(k+m)+1,2j+h+1-(k+m)}W_{n}\tilde{\tilde{\omega}}_{Y}^{i+j+2},\]
so on either side we have the same number of terms corresponding to
$k+m$. Since the filtration $P_{l}(W_{n}A^{\cdot\cdot})$ satisfies
the two properties above, it must be the monodromy filtration of $\nu$. 

Note that the differentials $d''$ on $\mbox{Gr}_{l}(W_{\cdot}A^{\cdot\cdot})$
are always $0$. Using the isomorphisms in Corollary \ref{identifying graded pieces}
we can rewrite \[
\mbox{Gr}_{2n-2+h}(W_{\cdot}A^{\cdot})\simeq\bigoplus_{j\geq0,j\geq-h}\bigoplus_{k=0}^{j}\bigoplus_{m=0}^{j+h}(W_{\cdot}\Omega_{Y^{(k+m+1,2j+h+1-(k+m)}}^{\cdot})[-2j-h](-j-h).\]
Thus, we get the following theorem.
\begin{thm}
\label{weight spectral sequence}There is a spectral sequence\[
E_{1}^{-h,i+h}=\bigoplus_{j\geq0,j\geq-h}\bigoplus_{k=0}^{j}\bigoplus_{m=0}^{j+h}H_{\mbox{cris}}^{i-2j-h}(Y^{(k+m+1,2j+h+1-(k+m)}/W)(-j-h)\]
\[
\Rightarrow H_{\mbox{cris}}^{i}(Y/W).\]
\end{thm}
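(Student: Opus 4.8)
The plan is to read off the theorem from the filtered complex $(W_\cdot A^{\cdot\cdot}, P_\bullet)$ that has already been constructed, by taking the spectral sequence of its associated total complex with respect to the monodromy filtration $P_\bullet$. First I would fix the abutment: by the corollary following the exactness proposition, $\Theta\colon W\omega_Y^{\cdot}\to WA^{\cdot}$ is a quasi-isomorphism, and the log de Rham--Witt complex $W\omega_Y^{\cdot}$ computes $H^{*}_{\mathrm{cris}}((Y,M)/W)$ by Hyodo--Kato theory; hence any spectral sequence attached to a (projective-system-compatible) filtration of $W_\cdot A^{\cdot}$ abuts to $H^{*}_{\mathrm{cris}}(Y/W)$. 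Since on each $W_nA^{ij}$ the filtration $P_l(W_nA^{ij})$ is exhaustive and vanishes for $l<2n-2-j$, it is finite in each total degree, so the associated spectral sequence of the filtered complex is well-defined and converges.

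Next I would compute the $E_1$-page. Normalizing the indexing by writing the filtration degree as $l=2n-2+h$ (so that filtration degree $l$ occupies the column $-h$ and total degree $i$ sits in $E_1^{-h,i+h}$), the $E_0$-object in filtration degree $l$ is the graded complex $\mathrm{Gr}_l(W_\cdot A^{\cdot\cdot})$ described above. The decisive input is the already-noted fact that the differential $d''$ is identically zero on $\mathrm{Gr}_l(W_\cdot A^{\cdot\cdot})$, so the only differential surviving to the graded object is the class of $d'$. Under the Poincar\'e residue isomorphisms of Corollary~\ref{identifying graded pieces}, $\mathrm{Gr}_{i',j'}W_\cdot\tilde{\tilde{\omega}}_Y^{\cdot}\simeq W_\cdot\Omega_{Y^{(i',j')}}^{\cdot}[-i'-j'](-i'-j')$, and under this identification $d'$ becomes the de Rham--Witt differential of the smooth proper $k$-scheme $Y^{(i',j')}$. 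Assembling the three direct sums over $j\ge\max(0,-h)$, $0\le k\le j$, $0\le m\le j+h$ as displayed above,
\[
\mathrm{Gr}_{2n-2+h}(W_\cdot A^{\cdot})\simeq\bigoplus_{j,k,m}W_\cdot\Omega_{Y^{(k+m+1,\,2j+h+1-(k+m))}}^{\cdot}[-2j-h](-j-h),
\]
so the $E_1$-term in this column is $\bigoplus_{j,k,m}H^{i-2j-h}\big(Y^{(k+m+1,\,2j+h+1-(k+m))},W\Omega^{\cdot}\big)(-j-h)$. Finally, since each $Y^{(i',j')}$ is smooth and proper over $k$, Illusie's comparison theorem \cite{I} identifies $H^{*}(Y^{(i',j')},W\Omega^{\cdot})$ with $H^{*}_{\mathrm{cris}}(Y^{(i',j')}/W)$, which gives exactly the stated $E_1^{-h,i+h}$.

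The point needing the most care is the passage between the integral pro-system $W_\cdot A^{\cdot\cdot}$ and its limit $WA^{\cdot\cdot}$: one must know that the graded pieces $\mathrm{Gr}_{i',j'}W_\cdot\tilde{\tilde{\omega}}_Y^{\cdot}$ satisfy $(\varprojlim_n\mathrm{Gr}_{i',j'}W_n\tilde{\tilde{\omega}}^{\cdot})\otimes^L_{\mathbb{R}}\mathbb{R}_n\simeq\mathrm{Gr}_{i',j'}W_n\tilde{\tilde{\omega}}^{\cdot}$, so that forming the spectral sequence commutes with $\varprojlim$ and no $\varprojlim^1$ terms intrude; this is supplied by Lemma~\ref{tensor with R_n} together with Lemma 1.3.3 of \cite{Mo}, exactly as in the exactness proposition above. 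Once this is in place, the remaining verifications---that $P_\bullet$ really is the monodromy filtration of $\nu$ (checked above via the injectivity of $\nu$ on graded pieces and the isomorphism $\nu^h\colon\mathrm{Gr}_{2n-2+h}\xrightarrow{\sim}\mathrm{Gr}_{2n-2-h}(-h)$) and that $d''$ annihilates the graded pieces---are already done, and the theorem follows by unwinding the combinatorial indexing.
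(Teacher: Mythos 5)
Your proposal is correct and follows essentially the same route as the paper: the theorem is obtained as the spectral sequence of the filtered complex $(W_{\cdot}A^{\cdot},P_{\bullet})$, with the abutment identified via the quasi-isomorphism $\Theta$ and the $E_{1}$-terms read off from the vanishing of $d''$ on the graded pieces together with the residue isomorphisms of Corollary \ref{identifying graded pieces}. Your explicit attention to the passage from the pro-system to the limit via Lemma \ref{tensor with R_n} is a point the paper leaves implicit, but it is the intended justification.
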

\begin{rem}
Note that the closed strata $Y^{(l_{1},l_{2})}$ are proper and smooth
so the $E^{-h,i+h}$ terms of the spectral sequence are strictly pure
of weight $i+h$. If the above spectral sequence degenerates at the
first page, then $H_{\mbox{cris}}^{i}(Y/W)$ is pure of weight $i$. 
\end{rem}

\section{Proof of the main theorem}

In this section we prove the main theorem. By the discussion at the
end of Section 2 its proof reduces to the following proposition. 
\begin{prop}
Let $\mathcal{A}_{U_{\mathrm{Iw}}}^{m_{\xi}}$ be the universal abelian
variety over $X_{U_{\mathrm{Iw}}}^{m_{\xi}}$. The direct limit of
log crystalline cohomologies \[
\lim_{\substack{\to\\
U_{\mathrm{Iw}}}
}a_{\xi}(H_{\mathrm{cris}}^{2n-2+m_{\xi}}(\mathcal{A}_{U_{\mathrm{Iw}}}^{m_{\xi}}\times_{\mathcal{O}_{K}}k/W)\otimes_{W}\bar{\mathbb{Q}}_{l}(t_{\xi}))[\Pi^{1,\mathfrak{S}}]\]
is pure of a certain weight. \end{prop}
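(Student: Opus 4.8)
The plan is to combine the weight spectral sequence of Theorem \ref{weight spectral sequence} with the known results on Igusa varieties (equations \ref{eq:igusa} and \ref{eq:mantred}) to show that the relevant part of the log crystalline cohomology is pure. First I would apply the semistable comparison theorem of \cite{N} (already invoked at the end of Section 2) to identify, after tensoring with $\bar{\mathbb{Q}}_{l}$ via $\tau_0$, the Weil--Deligne representation attached to $a_\xi H^{2n-2+m_\xi}(\mathcal{A}^{m_\xi}\times_{\mathcal{O}_K}\bar K,\bar{\mathbb{Q}}_l(t_\xi))[\Pi^{1,\mathfrak S}]$ with the module obtained from the log crystalline cohomology $a_\xi H^{2n-2+m_\xi}_{\mathrm{cris}}(\mathcal{A}^{m_\xi}_{U_{\mathrm{Iw}}}\times_{\mathcal{O}_K}k/W)[\Pi^{1,\mathfrak S}]$ together with its Frobenius and its monodromy operator $N$; by Lemma 1.4(4) of \cite{TY} it is enough to check purity of this crystalline incarnation.

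Next I would feed the weight spectral sequence from Theorem \ref{weight spectral sequence} into this picture, applied to $Y=\mathcal{A}^{m_\xi}_{U_{\mathrm{Iw}}}\times_{\mathcal O_K}k$ with its log structure $M$. Passing to $\Pi^{1,\mathfrak S}$-isotypic parts (these operations are exact), we get a spectral sequence whose $E_1$-terms are (Tate twists of) crystalline cohomology groups $H^{i-2j-h}_{\mathrm{cris}}(Y^{(l_1,l_2)}/W)[\Pi^{1,\mathfrak S}]$ of the proper smooth closed Newton-polygon strata, converging to $a_\xi H^{2n-2+m_\xi}_{\mathrm{cris}}(\mathcal{A}^{m_\xi}/W)[\Pi^{1,\mathfrak S}]$. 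Since each $Y^{(l_1,l_2)}$ is proper and smooth over $k$, Katz--Messing / the Weil conjectures give that each $E_1$-term is strictly pure of weight $i+h$; so by the Remark after Theorem \ref{weight spectral sequence} it remains only to show degeneration at $E_1$ of the $\Pi^{1,\mathfrak S}$-isotypic part.

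For the degeneration I would argue exactly as in the proof of Theorem 7.4 of \cite{C} and as in \cite{TY}: compare the $E_1$-page with the cohomology of Igusa varieties. Each closed stratum $Y_{U_{\mathrm{Iw}},S,T}$ is fibered (via an Igusa tower) over the Newton strata appearing in the proper-smooth case treated in \cite{C}, so the $\Pi^{1,\mathfrak S}$-isotypic part of its cohomology is computed by the formula \ref{eq:igusa} for $H_c(\mathrm{Ig}^{(h_1,h_2)},\mathcal{L}_\xi)[\Pi^{1,\mathfrak S}]$, which by \ref{eq:mantred} evaluates to a single explicit term $C_G[\iota_l^{-1}\Pi^{1,\infty}]\otimes[(\pi_{p,0}\circ\mathrm{Art}^{-1})|_{W_K}\otimes\iota_l^{-1}\mathcal{L}_{K,n}(\Pi^0_{F',w})]$; after applying $BC$ and projecting, the Euler characteristic of the $E_1$-page in each total degree reduces (as in Corollary 7.3 of \cite{C}) to something concentrated in the single diagonal $i=2n-2$ where a dimension count matches the known total. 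Since the abutment $a_\xi H^k_{\mathrm{cris}}(\mathcal{A}^{m_\xi}/W)[\Pi^{1,\mathfrak S}]$ vanishes for $k\neq 2n-2+m_\xi$ (this is the crystalline analogue of what was shown at the end of the proof of the Proposition in Section 2, via the spectral sequences of Prop.\ 7.2 of \cite{C}), all differentials in the $\Pi^{1,\mathfrak S}$-isotypic spectral sequence must vanish on the relevant page; hence it degenerates at $E_1$ and the abutment is pure of weight $2n-2+m_\xi$ (equivalently, after untwisting by $t_\xi$, the desired purity for $W_{F'}$ follows). The main obstacle I expect is not any single hard estimate but rather carefully bookkeeping the compatibility between the abstract monodromy filtration on $W_nA^\cdot$ of Section 4 and the monodromy operator coming from \cite{N} on the $\Pi^{1,\mathfrak S}$-part, i.e.\ verifying that the isotypic projection is compatible with all the structure ($F$, $N$, the weight filtration) so that the weight-monodromy argument of \cite{TY} applies verbatim; once that is in place the degeneration is a formal consequence of the vanishing of the abutment outside the middle degree.
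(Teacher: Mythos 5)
Your overall architecture matches the paper's: reduce to the crystalline side via the comparison theorem of \cite{N}, apply the weight spectral sequence of Theorem \ref{weight spectral sequence}, note that the $E_{1}$-terms are strictly pure because the closed strata are proper and smooth, and conclude purity from degeneration. However, there are two genuine problems with how you get the degeneration.

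First, and most importantly, you are missing the key idea that makes the vanishing of the off-diagonal $E_{1}$-terms work at all. The vanishing results you invoke (formula \ref{eq:igusa}, Corollary 7.3 and Prop.\ 5.10 of \cite{C}) are statements about $\ell$-adic \'etale cohomology with $\ell$ different from the residue characteristic. Here the $E_{1}$-terms are $\Pi^{1,\mathfrak S}$-isotypic components of \emph{crystalline} cohomology of the strata, i.e.\ we are exactly in the $l=p$ situation, and one cannot simply quote the Igusa-variety computation for them. The paper bridges this by choosing an auxiliary prime $p\neq l$ and using the main theorem of \cite{GM} (crystalline cohomology is a Weil cohomology in the strong sense) together with Theorem 2(2) of \cite{KM} (the characteristic polynomial of an algebraic correspondence on a proper smooth variety is independent of the Weil cohomology theory) to equate the dimensions of the isotypic components cut out by $a_{\xi}$ and the Hecke operators in crystalline cohomology with those in $p$-adic \'etale cohomology; only then does Prop.\ 5.10 of \cite{C} apply and give $E_{1}^{-h,i+h}=0$ unless $i=2n-2$. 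Without this transfer of dimension counts across cohomology theories your argument does not get off the ground.

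Second, your stated mechanism for degeneration is logically inverted. Vanishing of the abutment outside the middle degree does \emph{not} force degeneration at $E_{1}$: a spectral sequence can have nonzero $E_{1}$-terms in several total degrees together with differentials that cancel everything outside one degree. Likewise an Euler-characteristic count cannot yield term-by-term vanishing. What actually forces degeneration is the vanishing of the $E_{1}$-terms themselves off the single diagonal $i=2n-2$ (established as above), since the $d_{r}$ differentials shift total degree by $1$. Once that is in place, the filtration induced by the spectral sequence is the monodromy filtration of $\nu$ by construction, its graded pieces are strictly pure of weight $2n-2+h+m_{\xi}-2t_{\xi}$, and purity of the abutment follows as in the Remark after Theorem \ref{weight spectral sequence}; your worry about compatibility of the isotypic projection with $F$ and $N$ is legitimate but secondary to the two points above.
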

\begin{proof}
Recall that we've chosen \[
U_{\mathrm{Iw}}=U^{l}\times U_{l}^{\mathfrak{p}_{1},\mathfrak{p}_{2}}(m)\times\mathrm{Iw}_{n,\mathfrak{p}_{1}}\times\mathrm{Iw}_{n,\mathfrak{p}_{2}}\subset G(\mathbb{A}^{\infty}).\]
Pick $m$ large enough such that $(\pi_{l})^{U_{l}^{\mathfrak{p}_{1},\mathfrak{p}_{2}}(m)\times\mathrm{Iw}_{n,\mathfrak{p}_{1}}\times\mathrm{Iw}_{n,\mathfrak{p}_{2}}}\not=0$,
where $\pi_{l}\in\mathrm{Irr}_{l}(G(\mathbb{Q}_{l})$ is such that
$BC(\pi_{l})=\iota_{l}^{-1}\Pi_{l}$. The results of Sections 3 and
4 apply to $\mathcal{A}_{U_{\mathrm{Iw}}}^{m_{\xi}}$. We have a stratification
of its special fiber by closed Newton polygon strata $\mathcal{A}_{U_{\mathrm{Iw}},S,T}^{m_{\xi}}$
with $S,T\subseteq\{1,\dots,n\}$ non-empty. By Theorem \ref{weight spectral sequence}
we have a spectral sequence

\[
E_{1}^{-h,i+h}=\bigoplus_{j\geq0,j\geq-h}\bigoplus_{k=0}^{j}\bigoplus_{m=0}^{j+h}\bigoplus_{\substack{\#S=k+m+1\\
\#T=2j+h+1-(k+m)}
}H_{\mathrm{cris}}^{i-2j-h}((\mathcal{A}_{U_{\mathrm{Iw}},S,T}^{m_{\xi,}}/W)(-j-h)\]
\[
\Rightarrow H_{\mathrm{cris}}^{i}(\mathcal{A}_{U_{\mathrm{Iw}}}^{m_{\xi}}\times_{\mathcal{O}_{K}}k/W).\]
We replace the cohomology degree $i$ by $i+m_{\xi}$, tensor with
$\bar{\mathbb{Q}}_{l}(t_{\xi})$, apply $a_{\xi}$ (which is obtained
from a linear combination of etale morphisms), passing to a direct
limit over $U^{l}$ and taking the $\Pi^{1,\mathfrak{S}}$-isotypic
components we get a spectral sequence:\[
E_{1}^{-h,i+h}=\bigoplus_{j\geq0,j\geq-h}\bigoplus_{k=0}^{j}\bigoplus_{m=0}^{j+h}\bigoplus_{\substack{\#S=k+m+1\\
\#T=2j+h+1-(k+m)}
}\lim_{\substack{\to\\
U^{l}}
}(a_{\xi}H_{\mathrm{cris}}^{i+m_{\xi}-2j-h}((\mathcal{A}_{U_{\mathrm{Iw}},S,T}^{m_{\xi,}}/W)(-j-h)\otimes_{W,\tau_{0}}\bar{\mathbb{Q}}_{l}(t_{\xi}))[\Pi^{1,\mathfrak{S}}]\]
\[
\Rightarrow\lim_{\substack{\to\\
U^{l}}
}(a_{\xi}H_{\mathrm{cris}}^{i+m_{\xi}}(\mathcal{A}_{U_{\mathrm{Iw}}}^{m_{\xi}}\times_{\mathcal{O}_{K}}k/W)\otimes_{W,\tau_{0}}\bar{\mathbb{Q}}_{l}(t_{\xi}))[\Pi^{1,\mathfrak{S}}].\]
For any compact open subgroup $U^{l}\subset G(\mathbb{A}^{\infty,l})$
and any prime $p\not=l$ with isomorphism $\iota_{p}:\mathbb{\bar{Q}}_{p}\stackrel{\sim}{\to}\mathbb{C}$
set $\xi':=(\iota_{p})^{-1}\iota_{l}\xi$ and $\Pi':=(\iota_{p})^{-1}\Pi^{1}$. 

We have \[
\dim_{\bar{\mathbb{Q}}_{l}}(\lim_{\substack{\to\\
U^{l}}
}a_{\xi}H_{\mathrm{cris}}^{i+m_{\xi}-2j-h}((\mathcal{A}_{U_{\mathrm{Iw}},S,T}^{m_{\xi,}}/W)(-j-h)\otimes_{W,\tau_{0}}\bar{\mathbb{Q}}_{l}))[\Pi^{1,\mathfrak{S}}]^{U^{l}}\]
\[
=\dim_{\mathbb{\bar{Q}}_{p}}(\lim_{\substack{\to\\
U^{l}}
}a_{\xi'}H^{i+m_{\xi'}-2j-h}(\mathcal{A}_{U_{\mathrm{Iw}},S,T}^{m_{\xi}},\bar{\mathbb{Q}}_{p}))[(\Pi')^{\mathfrak{S}}]^{U^{l}}\]
\[
=\dim_{\bar{\mathbb{Q}}_{p}}(\lim_{\substack{\to\\
U^{l}}
}H^{i-2j-h}(X_{U_{\mathrm{Iw}},S,T},\mathcal{L}_{\xi'}))[(\Pi')^{\mathfrak{S}}]^{U^{l}}.\]
The first equality is a consequence of the main theorem of \cite{GM}
and of Theorem 2 (2) of \cite{KM}. The former proves that crystalline
cohomology is a Weil cohomology theory in the strong sense. The latter
is the statement that the characteristic polynomial on $H^{i}(X)$
of an integrally algebraic cycle on $X\times X$ of codimension $n$,
for a projective smooth variety $X/k$ of dimension $n$, is independent
of the Weil cohomology theory $H$. 

The dimension in the third row is equal to $0$ unless $i=2n-2$ by
Prop. 5.10 of \cite{C}. Therefore, $E_{1}^{-h,i+h}=0$ unless $i=2n-2$,
so the $E_{1}$ page of the spectral sequence is concentrated on a
diagonal. The spectral sequence degenerates at the $E_{1}$ page and
the term corresponding to $E_{1}^{h,2n-2+h}$ is strictly pure of
weight $h+2n-2+m_{\xi}-2t_{\xi}$, which shows that the abutment is
pure. \end{proof}

\end{document}